\newtheorem{theorem}{Theorem}[section]
\newtheorem{prop}[theorem]{Proposition}
\newtheorem{lemma}[theorem]{Lemma}
\newtheorem{conjecture}[theorem]{Conjecture}
\newcommand{\E}{\mathbb{E}}
\newcommand{\N}{\mathbb{N}}
\renewcommand{\P}{\mathbb{P}}
\newcommand{\Q}{\mathbb{Q}}
\newcommand{\R}{\mathbb{R}}
\newcommand{\Z}{\mathbb{Z}}
\newcommand{\cB}{\mathcal{B}}
\newcommand{\cF}{\mathcal{F}}
\newcommand{\cG}{\mathcal{G}}
\newcommand{\cK}{\mathcal{K}}
\newcommand{\cL}{\mathcal{L}}
\newcommand{\cN}{\mathcal{N}}
\newcommand{\cS}{\mathcal{S}}
\newcommand{\cT}{\mathcal{T}}
\renewcommand{\1}{\mathds{1}}
\renewcommand{\d}{\, \mathrm{d}}
\renewcommand{\epsilon}{\varepsilon}
\renewcommand{\phi}{\varphi}
\newcounter{numeroexo}
\newcommand{\interieur}[1]{\text{Int}(#1)}
\newcommand{\var}{\text{var}}
\newcommand{\parent}[1]{\overset\leftarrow{#1}}
\newcommand{\bad}{\text{BadGap}}
\newcommand{\rejection}{\text{Interference}}
\newcommand{\rhorejection}{\text{Interference}_\rho}
\newcommand{\rhobad}{\text{BadGap}_\rho}
\newcommand{\Overpopulation}{\text{Overpopulation}}
\newcommand{\autre}{\text{Else}}
\newcommand{\smallrho}[3]{\text{Small}_{#3}\left(#1,#2\right)}
\newcommand{\BRW}{BRW}
\renewcommand{\root}{\varnothing}
\newcommand{\survival}{\text{S}}
\newcommand{\troncature}{\Lambda}
\newcommand\gaussienne{\cN}
\title{Percolation in the Boolean model with convex grains in high dimension}
\author{
Jean-Baptiste Gouéré\footnote{IDP. Université de Tours. France. E-Mail:jean-baptiste.gouere@univ-tours.fr},
Florestan Labéy\footnote{IDP. Université de Tours. France. E-Mail:florestan.labey@univ-tours.fr}
}
\begin{document}

\selectlanguage{english}

\maketitle

\begin{abstract}
We investigate percolation in the Boolean model with convex grains in high dimension.
For each dimension $d$, one fixes a compact, convex and symmetric set $K \subset \R^d$ with non empty interior.
In a first setting, the Boolean model is a reunion of translates of $K$.
In a second setting, the Boolean model is a reunion of translates of $K$ or $\rho K$ for a further parameter $\rho \in (1,2)$.
We give the asymptotic behavior of the percolation probability and of the percolation threshold in the two settings.
\end{abstract}

\section{Introduction and main results}

\subsection{Setting}
\label{s:setting}

\paragraph{Boolean model.} 
For any $d \ge 1$, we denote by $\cK(d)$ the set of subsets $K$ of $\R^d$ such that: 
\begin{itemize}
\item The set $K$ is compact, convex and symmetric (that is, for all $x \in K, -x \in K$).
\item The Lebesgue measure of $K$ satisfies $|K|=1$.
\end{itemize}
Note that for all $K \in \cK(d)$,  the interior set $\interieur{K}$ is non empty.
Otherwise $K$ would be included in a proper affine subspace of $\R^d$ and its Lebesgue measure would be $0$.

Let $d \ge 1$ and $K \in \cK(d)$.
Let $\nu$ be a finite measure on $(0,+\infty)$ with positive mass.
For simplicity we assume that the support of $\nu$ is bounded.
Let $\lambda > 0$.
Let $\xi$ be a Poisson point process on $\R^d \times (0,+\infty)$ with intensity measure $\d x \otimes \lambda\nu$.
Consider
\[
\Sigma = \Sigma(\lambda,\nu,d,K) = \bigcup_{(c,r) \in \xi} c+r  K.
\]
This is the Boolean model with parameters $\lambda, \nu, d$ and $K$.

We call the sets $c+rK$ the {\em grains} of the Boolean model.
We call $c$ the {\em center} and $r$ the {\em radius} of the grain $c+rK$.

We can write
\begin{equation}\label{e:xichi}
\xi = \{(c,r(c)), c \in \chi\}
\end{equation}
where $\chi$ is a Poisson point process on $\R^d$ with intensity measure $\lambda \nu[(0,\infty)] \d x$.
Given $\chi$, $(r(c))_{c \in \chi}$ is a family of i.i.d.r.v.\ with common distribution $\nu[(0,\infty)]^{-1} \nu$.
As we shall not need this result, we do not give a more formal statement.
We nevertheless think that it can provide some intuition.


We denote by $B(d)$ the Euclidean closed ball of $\R^d$ centered at the origin and such that $|B(d)|=1$.
Note that $B(d)$ belongs to $\cK(d)$.
Most of the times, we will write simply $B$.
We refer to $\Sigma(\lambda,\nu,d,B)$ as a Euclidean Boolean model.
Note that, with our choice of terminology, $B$ is a grain of radius $1$ whereas this is not a Euclidean ball of radius $1$.
This should create no confusion as we will use the term "radius" only in the above sense.

We refer to the books by Kingman \cite{Kingman-livre} and Last and Penrose \cite{Last-Penrose-livre} for background on Poisson point processes.

\paragraph{Percolation in the Boolean model.} 
Fix $r>0$. 
Recall \eqref{e:xichi} and set $\chi^0=\chi \cup \{0\}$ and $r(0)=r$.
We define an unoriented graph structure on $\chi^0$ by putting an edge between $x$ and $y$ if $x+r(x)K$ touches $y+r(y)K$.
As $K$ is convex and symmetric,  
\[
x+r(x)K \text{ touches }y+r(y)K \iff x-y \in [r(x)+r(y)]K  \iff y-x \in [r(x)+r(y)]K.
\]
Note (see \eqref{e:pouf} in Appendix \ref{s:detail-lambda_c}) for all $\lambda>0$,
\begin{align}
& \P[\text{the connected component of the graph }\chi^0\text{ that contains }0\text{ is unbounded}] \nonumber \\
& = \P[\text{the connected component of }\Sigma \cup rK \text{ that contains }0\text{ is unbounded}]. \label{e:ahouicetruc}
\end{align}
We define the percolation threshold as usual (see Appendix \ref{s:detail-lambda_c} for details):
\begin{align}
& \lambda_c(\nu,d,K)  \nonumber \\
 & = \inf \{\lambda >0 : \P[\text{the connected component of the graph }\chi^0\text{ that contains }0\text{ is unbounded}]>0\} 
 \label{e:lambda_c_graphe} \\
 & = \inf \{\lambda >0 : \P[\text{the connected component of }\Sigma \cup rK \text{ that contains }0\text{ is unbounded}]>0\} 
 \label{e:lambda_c_composante}\\
 & = \inf \{\lambda >0 : \P[\text{one of the connected components of }\Sigma\text{ is unbounded}]=1\}. 
 \label{e:lambda_c_01}
\end{align}
In particular this does not depend on the choice of $r>0$.
We refer to the book by Meester and Roy \cite{Meester-Roy-livre} for background on percolation in the Boolean model.

As $K$ is symmetric and as the interior set $\interieur{K}$ is non empty, there exists $r^->0$ such that $r^- B \subset K$.
As $K$ is compact, there exists $r^+>0$ such that $K \subset r^+ B$.
Therefore, by a natural coupling (and with an obvious generalization in our notations as $r^\pm B$ may not belong to $\cK(d)$),
\[
\Sigma(\lambda,\nu,r^-B,d) \subset \Sigma(\lambda,\nu,d,K) \subset \Sigma(\lambda,\nu,r^+B,d).
\]
By standard results on percolation in the Euclidean Boolean model, we deduce that $\lambda_c$ is always positive and finite.
Actually, we will provide lower and upper bounds on $\lambda_c$ with independent arguments.

\paragraph{Scale invariant thresholds.} 
As in \cite{GM-grande-dimension}, we define
\footnote{In \cite{GM-grande-dimension} there is an extra factor $v_d$ which is the volume of the unit Euclidean ball.
This is due to the fact that, in \cite{GM-grande-dimension}, the unit grain is the unit Euclidean ball whereas here the unit grain is 
a set $K$ of volume $1$.}
\[
c_c(\nu,d,K) = 1-\exp\left(-\lambda_c(\nu,d,K) \int_{(0,+\infty)} r^d \nu(dr)\right) 
\]
and
\[
\tilde\lambda_c(\nu,d,K) = -2^d\ln(1-c_c(\nu,d,K)) = \lambda_c(\nu,d,K) \int_{(0,+\infty)} (2r)^d \nu(dr).
\]
The quantity $c_c(\nu,d,K)$ is the probability that a given point (say $0$) belongs to the critical Boolean model $\Sigma(\lambda_c,\nu,d,K)$.
By ergodicity, this is also the density of the critical Boolean model.
It has therefore a clear geometrical meaning.
It is moreover invariant by scaling: if all radii are multiplied by a constant, $\lambda_c$ changes but $c_c$ remains unchanged.
For these reasons $c_c$ - and the related quantity $\tilde\lambda_c$ - are more convenient when comparing the threshold for different measure $\nu$.
We refer to \cite{GM-grande-dimension} for a more detailed discussion on these topics.

\subsection{Known results in the Euclidean setting}

\subsubsection{The case of a constant radius}

\paragraph{Framework.} 
We are interested here in the case $\nu=\delta_{1/2}$ and $K=B$.
In other words, each grain is a translate of $\frac 1 2 B$.
For $\lambda>0$ and $d \ge 1$, let
\[
C^0 = C^0(\lambda,\delta_{1/2},d,B)
\]
be the connected component of 
\[
\Sigma(\lambda,\delta_{1/2},d,B) \cup \frac 1 2 B
\]
that contains the origin.
As mentioned in \eqref{e:lambda_c_composante}, 
\[
\lambda_c(\delta_{1/2},d,B) = \inf \{\lambda>0 : \P[C^0(\lambda,\delta_{1/2},d,B) \text{ is unbounded}]>0\}.
\]

\paragraph{Link with a Galton-Watson process.} 
In order to provide some intuition, let us describe the link between $C^0$ and some Galton-Watson process.
A more precise description will be given later.

Fix $\lambda>0$ and $d \ge 1$.
Denote by $N_d$ the number of balls that belongs to $C^0$.
The set $C^0$ is unbounded if and only if $N_d$ is  infinite.
Call $\frac 1 2 B$ the ball of generation $0$.
Define the balls of generation $1$ as the random balls of $\Sigma$ that touch $\frac 1 2 B$.
Define the balls of generation $2$ as the random balls of $\Sigma$ that touch at least of ball of generation $1$ 
without being a ball of generation $0$ or $1$
and so on.

Note that $x+\frac 1 2 B$ touches $x'+\frac 1 2 B$ if and only if $x' \in x + B$.
Therefore the number of random balls that intersect a given deterministic ball $x+\frac 1 2 B$ is a Poisson random variable with parameter $\lambda$.
Indeed, this is the number of points of a Poisson point process on $\R^d$ with intensity $\lambda \d x$ that belongs to $x+B$ and $|B|=1$.
If there where no geometrical interference between children (a ill defined term) of different balls, 
then $N_d$ would be the total population $Z_d$ of a Galton-Watson process with offspring distribution Poisson$(\lambda)$.
See Section \ref{s:cluster-brw-basic} for details.
Taking into account interaction, one can see that $N_d$ is actually stochastically dominated by $Z_d$.

Let $\survival(\lambda)$ denote the survival probability of a Galton-Watson process with Poisson$(\lambda)$ offspring.
Turning the above intuition into a proof, one can easily prove
\[
\P[C^0(\lambda,\delta_{1/2},d,B) \text{ is unbounded}] \le \survival(\lambda)
\]
and therefore
\[
\lambda_c(\delta_{1/2},d,B) \ge 1.
\]

\paragraph{Result.}
In \cite{Penrose-high-dimensions}, Penrose proves that these bounds are asymptotically sharp when $d$ tends to $\infty$.

\begin{theorem}[\cite{Penrose-high-dimensions}]  \label{t:penrose}
For any $\lambda>0$,
\[
\lim_{d \to \infty} \P[C^0(\lambda,\delta_{1/2},d,B) \text{ is unbounded}] = \survival(\lambda).
\]
Moreover,
\[
\lim_{d \to \infty} \lambda_c(\delta_{1/2},d,B) = 1.
\]
\end{theorem}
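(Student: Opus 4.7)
The plan is to couple the cluster $C^0$ with the genealogy of a Galton--Watson tree with $\mathrm{Poisson}(\lambda)$ offspring, and to show that the geometric interference between these two structures vanishes as $d \to \infty$. Explore $C^0$ breadth-first from the origin: given the history up to generation $n$, the centers of the random balls of $\Sigma$ touching a given ball $x + \frac{1}{2}B$ of generation $n$ form a $\mathrm{Poisson}(\lambda)$ number of points uniform in $x + B$, by the Poisson property of $\chi$ and the fact that $|B|=1$. Assigning each such point to a unique parent, $|C^0|$ is stochastically dominated by the total progeny $Z_d$ of a $\mathrm{GW}(\mathrm{Poisson}(\lambda))$ tree, so $\P[C^0 \text{ unbounded}] \le \survival(\lambda)$ and $\lambda_c(\delta_{1/2},d,B) \ge 1$ for every $d$.

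The geometric heart of the matching asymptotic lower bound is the high-dimensional estimate that, for $X, Y$ independent and uniform in $B = B(d)$,
\[
\P[X - Y \in B(d)] \;=\; \int_{B(d)} |B(d) \cap (z + B(d))| \, dz \;\xrightarrow[d \to \infty]{}\; 0.
\]
This follows from concentration of measure in the unit-volume Euclidean ball, whose Euclidean radius $R_d$ grows like $\sqrt{d/(2\pi e)}$, while the typical pairwise distance $|X - Y|$ concentrates around $\sqrt{2}\,R_d$. The same argument controls the expected overlap $\E[|B \cap (Z + B)|]$ for the differences $Z$ arising between individuals of the BRW, so by a union bound the probability of any geometric conflict among the at most $M$ individuals per generation of a BRW truncated at depth $N$ is $o_d(1)$.

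The first assertion then follows by a double-limit coupling. For fixed $N, M \ge 1$, simulate a $\mathrm{Poisson}(\lambda)$ BRW with uniform displacements in $B$ up to generation $N$, and in parallel explore $C^0$ using the same Poisson inputs. Outside the interference event and on the event that the BRW population stays below $M$ up to generation $N$, the two processes coincide, yielding
\[
\liminf_{d \to \infty} \P\bigl[C^0 \text{ contains a grain of generation } N\bigr] \;\ge\; \P\bigl[\text{BRW survives to generation } N,\ \text{populations} \le M\bigr].
\]
Letting $M \to \infty$ then $N \to \infty$ gives $\liminf_d \P[C^0 \text{ unbounded}] \ge \survival(\lambda)$, matching the upper bound. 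The threshold statement is immediate: any $\lambda > 1$ satisfies $\survival(\lambda) > 0$, so $\P[C^0 \text{ unbounded}] > 0$ for all large $d$, whence $\limsup_d \lambda_c(\delta_{1/2},d,B) \le \lambda$; combined with $\lambda_c \ge 1$ this gives the limit $1$. The main obstacle is controlling the cumulative interference, since geometric conflicts could a priori cascade across generations; the $(N, M)$ truncation together with the single volume estimate $\P[X - Y \in B(d)] \to 0$ reduces this to a finite union bound, the remainder being careful bookkeeping of the coupled exploration.
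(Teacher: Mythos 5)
Your upper bound ($\P[C^0 \text{ unbounded}]\le \survival(\lambda)$, hence $\lambda_c\ge 1$) and the geometric estimate $\P[X-Y\in B(d)]\to 0$ are both correct and are exactly what the paper uses (Proposition \ref{p:penrose-convexe-upper} and Lemma \ref{l:somme-unif-boule}). The problem is the lower bound, and the gap is in the double-limit step.

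For fixed $N$ your finite-depth coupling yields (after $M\to\infty$)
\[
\liminf_{d\to\infty}\P\bigl[C^0 \text{ has a grain of generation } N\bigr]\;\ge\;\P\bigl[\text{BRW survives to generation } N\bigr],
\]
and the right-hand side increases to $\survival(\lambda)$ as $N\to\infty$. But for each fixed $d$ the event $\{C^0 \text{ has a grain of generation } N\}$ decreases to $\{C^0 \text{ unbounded}\}$ as $N\to\infty$, so $\P[C^0 \text{ unbounded}]\le \P[C^0 \text{ has a grain of generation } N]$ for every $N$. Sending $N\to\infty$ in your bound therefore controls $\lim_N \liminf_d \P[C^0 \text{ has gen } N]$, which is an \emph{upper} bound on $\liminf_d\P[C^0 \text{ unbounded}]$, not a lower bound. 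Concretely, a family of processes that dies out almost surely but survives to generation $d$ would satisfy $\liminf_d\P[\text{has gen } N]=1$ for every $N$ while $\P[\text{unbounded}]\equiv 0$; nothing in your argument rules this out. The finite-$N$ coupling cannot be iterated naively because the union bound over interference events degrades as the number of generations grows, so the error is not uniform in $N$.

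This is precisely the difficulty that both Penrose's original proof and the paper's proof of the generalization (Theorem \ref{t:penrose-convex}, which contains Theorem \ref{t:penrose} as the case $K=B$) address via a renormalization argument. Instead of running the coupling to depth $N$ and hoping to take $N\to\infty$, one runs it for a \emph{fixed} number $k$ of generations to produce $m$ well-localized seeds (Lemma \ref{l:brique}, which uses a linear map $L:\R^d\to\R^2$ and a central-limit theorem for log-concave vectors to localize the $L$-images of the particles), and one verifies that after each block of $k$ generations one can restart the construction while controlling interference with all previously explored regions via a "good gap" condition (Lemma \ref{l:good_gap}). The block events are then compared to a supercritical oriented site percolation on the lattice $\overline\cL$, whose percolation function $\theta(u)\to 1$ as $u\to 1$. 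That comparison is what yields a bound on $\P[C^0 \text{ unbounded}]$ directly, with a single $d_0$ working for all generations simultaneously. You would need something of this sort — a uniform-in-generation block/renormalization scheme — to close the gap.
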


The idea is to prove that, when $d$ tends to infinity, the geometrical interference vanish and the genealogy of the balls becomes closer and closer
to the genealogy of the Galton-Watson process.

\subsubsection{The case of radii that can take two values $1$ and $\rho$ with $1<\rho<2$.}

\paragraph{Framework.} 
Let $\rho>1$ and $d \ge 1$.
We consider here the case 
\begin{equation}\label{e:nu}
\nu = \nu_{d,\rho} = \delta_{1/2} + \frac{1}{\rho^d} \delta_{\rho/2}
\end{equation}
and $K=B$.
We refer to \cite{GM-grande-dimension} for the motivation of the choice of $\nu_{d,\rho}$.
Basically, the idea is to keep constant in $d$ the relative influence of grains of different radii.

For all $\lambda>0$, let 
\[
C^0 = C^0(\lambda,\nu_{d,\rho},d,B)
\]
be the connected component of 
\[
\Sigma(\lambda,\nu_{d,\rho},d,B) \cup \frac \rho 2 B
\]
that contains the origin.
As before, we are interested in the percolation probability, that is the probability that $C^0$ is unbounded, and the percolation threshold $\lambda_c$.

\paragraph{Link with a two-type Galton-Watson process.} 
As in the constant radius case, there is a natural genealogy for the balls of $C^0$
and this genealogical structure is stochastically bounded from above by a Galton-Watson process.
In this setting, the Galton-Watson is a two-types Galton-Watson process: there are $\frac 1 2 B$ grains and $\frac \rho 2 B$ grains.
One can check easily that the probability that $C^0$ is unbounded is bounded from above by the survival probability of this Galton-Watson process.
This gives a lower bound on $\lambda_c(\nu_{d,\rho},d,B)$ which is equivalent to $\kappa^c(\rho)^d$, when $d$ tends to $\infty$, where
\begin{equation}\label{e:kappa_c}
\kappa_c(\rho) = \frac{2\sqrt\rho}{1+\rho} < 1.
\end{equation}

%

\paragraph{Result.} In \cite{GM-grande-dimension}, 
Gouéré and Marchand prove that if $1<\rho<2$, when $d$ tend to infinity, geometrical interference vanish 
and the behavior of the percolation threshold is roughly given
by the behavior of the critical parameter of the two type Galton-Watson process.

\begin{theorem}[\cite{GM-grande-dimension}] \label{t:GM}
For all $\rho \in (1,2)$,
\begin{equation}\label{e:t:GM}
\lim_{d \to \infty} \frac 1 d \ln\left(\lambda_c(\nu_{d,\rho},d,B)\right)  = \ln\left(\kappa_c(\rho)\right).
\end{equation}
\end{theorem}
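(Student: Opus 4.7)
The lower bound $\liminf_{d\to\infty} d^{-1}\ln \lambda_c(\nu_{d,\rho},d,B) \geq \ln\kappa_c(\rho)$ is essentially recorded in the discussion preceding the theorem. A generation-by-generation exploration of $C^0$ produces a tree that is stochastically dominated by a two-type Galton--Watson process with offspring mean matrix
\[
\lambda M_d = \lambda \begin{pmatrix} 1 & ((1+\rho)/(2\rho))^d \\ ((1+\rho)/2)^d & 1 \end{pmatrix},
\]
the entries coming from the fact that a type-$i$ grain of radius $r_i \in \{1/2,\rho/2\}$ located at $c$ is touched by a Poisson number of type-$j$ grains whose centres lie in $c + (r_i+r_j)B$, a set of volume $(r_i+r_j)^d$, combined with the mass $1$ or $\rho^{-d}$ that $\nu_{d,\rho}$ puts on radius $r_j$. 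The Perron eigenvalue of $M_d$ equals $1+\kappa_c(\rho)^{-d}$, so whenever $\lambda(1+\kappa_c(\rho)^{-d}) \leq 1$ the process is subcritical and $C^0$ is almost surely finite; this yields the lower bound after taking logarithms and dividing by $d$.

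For the matching upper bound, fix $\epsilon>0$ and choose $\lambda_d = ((1+\epsilon)\kappa_c(\rho))^d$, so that the above Galton--Watson has Perron eigenvalue $\lambda_d(1+\kappa_c(\rho)^{-d}) \sim (1+\epsilon)^d$ diverging with $d$. I would realise the domination as a concrete coupling: construct an auxiliary two-type branching random walk (BRW) in $\R^d$ where each type-$i$ particle at $c$ emits a Poisson$(\lambda_d\cdot (M_d)_{ij})$ number of type-$j$ children uniformly distributed in $c + (r_i+r_j)B$, and retain only those particles whose grain does not overlap any previously placed grain other than its parent. By construction the retained grains form a subset of $C^0$, while the unthinned BRW family tree is a two-type Galton--Watson process whose Perron eigenvalue diverges with $d$, hence surviving with probability bounded below uniformly in $d$. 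It remains to show that, up to a cut-off depth $N_d \to \infty$ chosen slowly, thinning removes asymptotically no particles.

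The core of the proof, and the place where the hypothesis $\rho<2$ enters, is the estimate that the expected number of rejected particles (\emph{interferences}) up to generation $N_d$, weighted by the Perron left eigenvector of $M_d$ in order to respect the two-type asymmetry, tends to $0$ as $d\to\infty$. For two BRW particles of types $i,j$ born at generations $n_1,n_2$ with common ancestor at generation $n$, the probability that their grains intersect is bounded via convolutions of the uniform displacement laws on balls in high dimension; the resulting factor is exponentially small in $d$ and, for $\rho<2$, beats the BRW growth rate even after summation over ancestors and types. The threshold $\rho=2$ is geometric: it is the value at which two small touching grains exactly fit along a diameter of a large grain, destroying the volume gain that powers the interference estimate. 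I expect this high-dimensional geometric estimate -- the two-type asymmetric analogue of Penrose's interference lemma -- to be the principal technical hurdle. Once it is in hand, the retained cluster at intensity $\lambda_d$ is infinite with positive probability, yielding $\lambda_c(\nu_{d,\rho},d,B) \leq \lambda_d$ and, letting $\epsilon\downarrow 0$, the upper bound $\limsup d^{-1}\ln\lambda_c \leq \ln\kappa_c(\rho)$.
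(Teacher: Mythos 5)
Your lower bound is correct: the two-type Galton--Watson domination with the mean matrix you write (equivalent, after the substitution $\lambda=\beta\kappa_c(\rho)^d$, to $M(\beta,\rho,d)$ in \eqref{e:M}) and the Perron eigenvalue $\lambda(1+\kappa_c(\rho)^{-d})$ give exactly $\liminf_d d^{-1}\ln\lambda_c\ge\ln\kappa_c(\rho)$. For the upper bound the mechanism you identify — couple with a BRW, show interference negligible for $\rho<2$ — is the right one, but the final step contains a real gap.

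The gap is the jump from ``thinning removes asymptotically no particles up to generation $N_d$'' to ``the retained cluster is infinite with positive probability.'' A first-moment bound on interferences only controls a \emph{bounded} number of generations: at intensity $\lambda_d=((1+\epsilon)\kappa_c(\rho))^d$ the Perron eigenvalue is of order $(1+\epsilon)^d$, the population at generation $n$ is of order $(1+\epsilon)^{dn}$, and the per-pair interference probability at genealogical distance $2n$ is of order $(\sqrt{2n}-\eta)^{-d}$; the resulting factor $\bigl((1+\epsilon)^{2n}/\sqrt{2n}\bigr)^d$ diverges as soon as $n$ exceeds a fixed constant, so a cut-off $N_d\to\infty$ ``chosen slowly'' does not keep your first moment small. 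And even where the bound holds, many undeleted particles at a finite depth is not the same as percolation — the retained cluster can still die later. Both Penrose's argument and the present paper convert finitely many controlled generations into percolation via a renormalization: project positions to $\R^2$ by a Klartag-type map, package $k$ generations into a success event for a box $A_L(i,j)$ (Lemma~\ref{l:brique} and its two-type analogue Lemma~\ref{l:brique-GM}), and couple the chain of successes to a supercritical oriented site percolation on $\overline\cL$ using the over-pruning rules of Sections~\ref{s:cluster-perco-orientee} and~\ref{s:cluster-perco-orientee-GM} to decouple boxes. Your plan needs this block argument or an explicit substitute for it.

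A secondary but important point: the Perron-eigenvector weighting, besides being mislabelled (you want the \emph{right} eigenvector, which weights $\rho$-particles by $\rho^{d/2}$), is not the useful quantity. Most type-$1$ particles \emph{will} be rejected — there are $\sim\sqrt{\rho}^d$ of them per $\rho$-parent, and no weighting makes their rejection count small — and this is harmless because almost all of them are childless. What must be protected is the $O(1)$-sized set of $\rho$-particles and of $1$-particles with a $\rho$-child; the over-pruning with the good-gap sets $G,G',H,H'$ of Section~\ref{s:good-gaps-GM} forces the conditional rejection probability of each such particle to be at most $M\sqrt{\rho}^d(\sqrt{2}-\eta)^{-d}$, which is exactly where $\rho<2$ enters via \eqref{e:shining}. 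Passing first to the cross-type subcluster $\widehat A^0$ (mean matrix $M'$ in \eqref{e:M'}, purely antidiagonal) so that interference is only $1$-against-$\rho$ cleans this bookkeeping up considerably and is worth adopting in place of the full $M_d$.
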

Note 
$
\tilde\lambda_c(\delta_{1/2},d,B) = \lambda_c(\delta_{1/2},d,B) 
$
and 
$
\tilde\lambda_c(\nu_{d,\rho},d,B) = 2\lambda_c(\nu_{d,\rho},d,B).
$
Therefore, by Theorems \ref{t:penrose} and \ref{t:GM}, for $d$ large enough,
\[
\tilde\lambda_c(\nu_{d,\rho},d,B) < \tilde\lambda_c(\delta_{1/2},d,B)
\]
and then
\[
c_c(\nu_{d,\rho},d,B) < c_c(\delta_{1/2},d,B).
\]
This result, which refutes a conjecture based on numerical estimations in low dimension and some heuristics in any dimension, 
was one of the motivations of \cite{GM-grande-dimension}.
We refer to \cite{GM-grande-dimension} for more details.

\subsubsection{The case of radii that can take two values $1$ and $\rho$ with $\rho>2$: geometry still plays a role in high dimension}

In \cite{GM-grande-dimension-2}, Gouéré and Marchand prove that \eqref{e:t:GM} does not hold when $\rho>2$.
In that case, when $d$ tend to infinity, geometrical interference do not vanish and the behavior of the percolation threshold is given
by a competition between geometrical effects (dependencies due to the lack of space in $\R^d$) and genealogical aspects (given by
the associated Galton-Watson process).

To sum up, geometrical interference do not always vanish in high dimension and geometry can still play a role.
This is one of the motivation of this work where we investigate percolation in the Boolean model in high dimensions beyond the Euclidean case.

\subsection{Our main results.}

\subsubsection{The case of a constant radius}
\label{s:penrose-convexe-statement}

We consider the case $\nu=\delta_{1/2}$.
For $d \ge 1$,  $K \in \cK(d)$ and $\lambda>0$, let 
\[
C^0 = C^0(\lambda,\delta_{1/2},d,K)
\]
be the connected component of 
\[
\Sigma(\lambda,\delta_{1/2},d,K) \cup \frac 1 2 K
\]
that contains the origin.
As mentioned in \eqref{e:lambda_c_composante}, 
\[
\lambda_c(\delta_{1/2},d,K) = \inf \{\lambda>0 : \P[C^0(\lambda,\delta_{1/2},d,K) \text{ is unbounded}]>0\}.
\]
Exactly as in the Euclidean case, one can check that the probability that $C^0(\lambda,\delta_{1/2},d,K)$ is unbounded is bounded from above
by $\survival(\lambda)$, the survival probability of a Galton-Watson process with progeny Poisson$(\lambda)$.
This yields $\lambda_c(\delta_{1/2},d,K) \ge 1$.
This is stated as Proposition \ref{p:penrose-convexe-upper}.

Our first main result is a generalization of Theorem \ref{t:penrose}.

\begin{theorem} \label{t:penrose-convex}
\begin{itemize}
\item For any $\lambda>0$,
\[
\lim_{d \to \infty, K \in \cK(d)} 
\P[C^0(\lambda,\delta_{1/2},d,K) \text{ is unbounded}] = \survival(\lambda).
\]
The convergence is uniform in $K \in \cK(d)$
\footnote{The statement "$f(d,K)$ converges to $\ell$ uniformly in $K \in \cK(d)$ when $d$ tends to $\infty$" means:
\[
\forall \epsilon>0, \exists d_0 \ge 1, \forall d \ge d_0, \forall K \in \cK(d), |f(d,K)-\ell| \le \epsilon.
\].}.
\item Moreover,
\[
\lim_{d \to \infty, K \in \cK(d)} \lambda_c(\delta_{1/2},d,K) = 1.
\]
The convergence is uniform in $K \in \cK(d)$.
\end{itemize}
\end{theorem}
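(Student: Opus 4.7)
The upper bound $\P[C^0(\lambda,\delta_{1/2},d,K) \text{ is unbounded}] \le \survival(\lambda)$ is already at hand (Proposition \ref{p:penrose-convexe-upper}), so the first part reduces to proving the matching lower bound uniformly in $K$: for every $\lambda>0$ and $\epsilon>0$ there exists $d_0$ such that for all $d \ge d_0$ and all $K \in \cK(d)$,
\[
\P[C^0(\lambda,\delta_{1/2},d,K) \text{ is unbounded}] \ge \survival(\lambda) - \epsilon.
\]
The statement on $\lambda_c \to 1$ then follows easily: combining this with $\lambda_c \ge 1$ (Proposition \ref{p:penrose-convexe-upper}), for any $\lambda>1$ one has $\survival(\lambda)>0$ and the lower bound forces positive percolation probability for all large $d$, uniformly in $K$.

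My approach is to follow Penrose's original proof and isolate the one place where the shape of $K$ matters. First I would put a genealogy on the grains of $C^0$: generation $0$ is $\tfrac{1}{2}K$, and generation $n+1$ consists of grains of $\Sigma$ touching some generation-$n$ grain without already lying in a previous generation. Since $K$ is convex, symmetric and of unit volume, two radius-$\tfrac12$ grains with centers $c_1,c_2$ touch iff $c_1-c_2 \in K$, so the set of candidate children of a grain at $c$ is a Poisson process on $c+K$ of intensity $\lambda$, with mean count $\lambda|K|=\lambda$. In the absence of interference, this genealogy is exactly a Galton-Watson tree with Poisson$(\lambda)$ offspring.

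Next I would couple the exploration of $C^0$ with such a Galton-Watson tree, fix a truncation level $N$, and argue that it suffices to show the two processes agree through the first $N$ generations with probability tending to $1$ as $d \to \infty$, \emph{uniformly in $K \in \cK(d)$}; letting $N \to \infty$ afterwards yields the bound. A union bound over pairs of grains in the first $N$ generations reduces everything to controlling a single interference event: two candidate children $Y_1,Y_2$ of the same or a neighbouring parent, each conditionally uniform in a translate of $K$, happen to touch, i.e.\ $Y_1-Y_2 \in K$. The relevant geometric quantity is therefore
\[
p(K) \;:=\; \P\bigl[Y_1-Y_2 \in K\bigr] \;=\; \int_K(\1_K * \1_K)(x)\d x,\qquad Y_1,Y_2\text{ i.i.d.\ uniform on }K.
\]

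The main obstacle, and the only genuinely new ingredient beyond Penrose's argument, is the uniform geometric estimate
\[
\sup_{K \in \cK(d)} p(K) \;\longrightarrow\; 0 \quad \text{as } d\to\infty.
\]
The density $g:=\1_K * \1_K$ is log-concave (Prékopa-Leindler), supported on $2K$, with $g(0)=|K|=1$ and $\int g = 1$; one expects an exponential bound $p(K)\le \alpha^d$ with a universal $\alpha<1$. Explicit computations already give $p([-\tfrac12,\tfrac12]^d)=(3/4)^d$ and, for the unit-volume Euclidean ball, $p(B)\sim c(3/4)^{d/2}$, suggesting that the correct universal rate is of this order. The required bound can be extracted either by combining the log-concavity of $g$ with Borell-type large-deviation estimates for log-concave probability measures, or by a direct Brunn-Minkowski comparison argument exploiting the fact that $g^{1/d}$ is concave on $2K$. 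Once this uniform geometric input is in hand, the remainder of Penrose's argument is shape-by-shape and the resulting bounds depend on $K$ only through $p(K)$, so the convergence comes out automatically uniform in $K \in \cK(d)$.
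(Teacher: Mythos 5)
Your reduction to the lower bound and the deduction of $\lambda_c\to 1$ from it are fine, and your identification of $p(K)=\P[X_K+X'_K\in K]$ as a key geometric quantity (with the claim $\sup_{K\in\cK(d)}p(K)\to 0$) is correct — in fact the paper proves this via Riesz's rearrangement inequality (Theorem \ref{t:riesz}, Lemmas \ref{l:somme-unif-boule} and \ref{l:good_gap}), which is cleaner than the Borell/Brunn--Minkowski route you gesture at and gives the sharper statement $\sup_{a,K}\P[a+X_K+X'_K\in K]\le\P[X_B+X'_B\in B]\to 0$. But the overall strategy you propose has a genuine gap, and it lies precisely where you compress things to ``letting $N\to\infty$ afterwards yields the bound.''

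Fixing a truncation level $N$ and showing the exploration of $C^0$ agrees with the Galton--Watson tree through $N$ generations with probability $\ge 1-\epsilon$ for all $d\ge d_0(N,\epsilon)$ only gives $\P[\text{the cluster has particles in generation }N]\ge\P[\tau^\lambda\text{ alive at gen }N]-\epsilon$. You cannot then send $N\to\infty$: the threshold $d_0(N,\epsilon)$ blows up with $N$ (the number of particle pairs you are union-bounding over grows roughly like $\lambda^{2N}$), so you never pass from ``the cluster reaches generation $N$'' to ``the cluster is infinite'' for a fixed $d$. Neither Penrose's original proof nor the present paper is of this form. Both use a genuinely infinitary mechanism: a renormalization comparing the cluster to a supercritical oriented site percolation on a two-dimensional lattice $\overline\cL$ embedded in $\R^d$. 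That is what turns a quantitative $k$-generation estimate (Lemma \ref{l:brique}) into an infinite cluster via \eqref{e:percolimite}. Moreover, the statement that ``the remainder of Penrose's argument is shape-by-shape and depends on $K$ only through $p(K)$'' is not accurate. The embedding step needs a direction along which the increments become asymptotically Gaussian uniformly in $K$; for $B$ this is trivial by isotropy, but for general $K$ it requires Klartag's CLT and concentration for log-concave measures (Theorems \ref{t:Klartag-CLT}, \ref{t:Klartag-concentration}). And the control of interference across the infinitely many renormalization blocks cannot be done with a single union bound; it requires the ``good gap'' set $G(d,K,\eta)$ (whose Euclidean analogue $\R^d\setminus\tfrac34 B$ has no direct $K$-version, which is why the paper defines $G$ abstractly and controls it via rearrangement), together with the localization and over-pruning machinery of Sections \ref{s:cluster-brw-over}, \ref{s:cluster-perco-orientee} and \ref{s:proba-briques}. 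In short: your geometric estimate is one of the needed inputs, but the main missing idea is the block/oriented-percolation comparison and the accompanying uniform control of positions and interference over all scales.
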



\subsubsection{The case of radii that can take two values $1$ and $\rho$ with $1<\rho<2$.}

\paragraph{Framework.} 
Let $\rho>1$ and $d \ge 1$.
We are interested in the case $\nu=\nu_{d,\rho}$ defined in \eqref{e:nu}.
With $\beta>0$ we associate the intensity 
\[
\lambda = \beta \kappa_c(\rho)^d
\]
where $\kappa_c(\rho)$ is defined in \eqref{e:kappa_c}.
This choice is natural because of Theorem \ref{t:GM}.
Let 
\[
C^0 = C^0(\beta \kappa_c(\rho)^d,\nu_{d,\rho},d,K)
\]
be the connected component of 
\[
\Sigma(\beta \kappa_c(\rho)^d,\nu_{d,\rho},d,K) \cup \frac \rho 2 K
\]
that contains the origin.
As before, we are interested in the percolation probability, that is the probability that $C^0$ is unbounded, and the percolation threshold
\[
\beta_c(\rho,d,K) = \inf \left\{\beta>0: \P\left[ C^0\big(\beta \kappa_c(\rho)^d,\nu_{d,\rho},d,K\big) \text{ is unbounded}\right]>0\right\}.
\]

\paragraph{Link with a two-type Galton-Watson process.} 
As explained above, there is a natural genealogy for the balls of $C^0$.
This genealogical structure is bounded from above by a Galton-Watson process.
In this setting, the Galton-Watson is a two-types Galton-Watson process: there are $\frac 1 2 K$ grains and $\frac \rho 2 K$ grains.
We will call them $1$-particles and $\rho$-particles.
Let 
\begin{equation}\label{e:M}
M=M(\beta,\rho,d) = 
\beta
\begin{pmatrix}
\left(\frac{2\sqrt{\rho}}{1+\rho}\right)^d & \left(\frac{1}{\sqrt\rho}\right)^d \\
\left(\sqrt\rho\right)^d  & \left(\frac{2\sqrt{\rho}}{1+\rho}\right)^d
\end{pmatrix}.
\end{equation}
This is the mean matrix of the  Galton-Watson process.
Denote by $N_1$ the random number of children of a $1$-particle which are $1$-particle.
Denote by $N_\rho$ the random number of children of a $1$-particle which are $\rho$-particle.
Then $N_1$ and $N_\rho$ are independent Poisson random variables with parameters given (in that order) by the first row of $M$.
The children of a $\rho$ behave in a similar way described by the second row of $M$.
Note that the Galton-Watson process does not depend on $K$.
However, its link with the cluster of the origin depends on $K$.
The Galton-Watson process starts with one $\rho$-particle.
Denote by $\survival(\beta,\rho,d)$ its survival probability.
As before, one can easily check that the probability that $C^0$ is unbounded is bounded from above by $\survival(\beta,\rho,d)$.
This gives a lower bound on $\beta_c(\rho,d,K)$.
This is formalized in Proposition \ref{p:t-GM-convex-upper}.

\paragraph{Result.}  In our second main result we prove that, if $1<\rho<2$, the above mentioned inequalities are asymptotically sharp when 
$d$ tends to $\infty$. As above, $\survival(\beta^2)$  denotes the survival probability of  Galton-Watson process with Poisson$(\beta^2)$ progeny.

\begin{theorem} \label{t:GM-convex}
\begin{itemize}
\item Let $\beta>0$ and $\rho \in (1,2)$. Then
\[
\lim_{d \to \infty, K \in \cK(d)}
\P\left[  C^0\big(\beta \kappa_c(\rho)^d,\nu_{d,\rho},d,K\big) \text{ is unbounded}\right] = S(\beta^2).
\]
The convergence is uniform in $K \in \cK(d)$.
\item Let $\rho \in (1,2)$. 
\[
\lim_{d \to \infty, K \in \cK(d)} \beta_c(\rho,d,K) = 1.
\]
The convergence is uniform in $K \in \cK(d)$.
\end{itemize}
\end{theorem}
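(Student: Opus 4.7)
The plan is to follow the strategy of \cite{GM-grande-dimension}, combining the upper bound of Proposition \ref{p:t-GM-convex-upper} with a matching lower bound obtained from an exploration coupling between $C^0$ and the two-type Galton-Watson process, while checking that the geometric interference vanishes uniformly over $K \in \cK(d)$ when $1 < \rho < 2$.

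The first step is to show that $\survival(\beta,\rho,d) \to S(\beta^2)$. Let $q_1,q_\rho$ denote the extinction probabilities starting from a $1$- or $\rho$-particle; they satisfy the system
\begin{align*}
q_1 &= \exp\!\bigl(-\beta \kappa_c(\rho)^d (1-q_1) - \beta\rho^{-d/2}(1-q_\rho)\bigr), \\
q_\rho &= \exp\!\bigl(-\beta\rho^{d/2}(1-q_1) - \beta \kappa_c(\rho)^d(1-q_\rho)\bigr).
\end{align*}
As $d\to\infty$, the diagonal entries $\beta\kappa_c(\rho)^d$ and the term $\beta\rho^{-d/2}$ tend to $0$, which forces $q_1 \to 1$ and, after a first order expansion, $\rho^{d/2}(1-q_1) \to \beta(1-q_\rho)$. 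Injecting this in the second equation yields $q_\rho \to q^*$ with $q^* = \exp(-\beta^2(1-q^*))$, so $1-q^* = S(\beta^2)$. Combined with the upper bound from Proposition \ref{p:t-GM-convex-upper}, this already gives the correct upper asymptotics for both the percolation probability and $\beta_c$.

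For the matching lower bound, I would run a two-generation exploration of $C^0$: starting from the central $\rho$-grain, at each step reveal simultaneously the $1$-children of each active $\rho$-grain and the $\rho$-children of those $1$-children. The expected number of $\rho$-grandchildren of a single $\rho$-grain, computed from the mean matrix $M$, is $\beta\rho^{d/2} \cdot \beta\rho^{-d/2} = \beta^2$, which explains the appearance of a Poisson$(\beta^2)$ offspring distribution in the limit. A $\rho$-grandchild is declared \emph{good} if (i) its $1$-parent intersects only the intended $\rho$-grain among those previously revealed, (ii) no two good $\rho$-grandchildren share a $1$-parent, and (iii) it is genuinely fresh with respect to the explored region. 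The "$1$-to-$1$" and "$\rho$-to-$\rho$" transitions are negligible because both diagonal entries of $M$ vanish uniformly; the good subcluster then stochastically dominates a Galton-Watson tree with offspring distribution converging to Poisson$(\beta^2)$, whose survival probability is $S(\beta^2)$.

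The main obstacle, and the only genuinely new difficulty compared to \cite{GM-grande-dimension}, is obtaining the volume estimates behind (i)--(iii) uniformly in $K \in \cK(d)$. The essential input is an exponential decay of intersection volumes: for any $\epsilon > 0$ there exists $C = C(\rho,\epsilon) < 1$ with
\[
\bigl| r K \cap (x + r' K) \bigr| \le C^d \quad\text{whenever } r,r' \in \{\tfrac12,\tfrac{\rho}{2}\} \text{ and } x \notin \epsilon(r+r')K,
\]
which follows from the concavity of $t \mapsto |rK \cap (tx + r'K)|^{1/d}$ (Prékopa-Leindler / Brunn) together with the symmetry of $K$ and the normalization $|K|=1$. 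The constraint $\rho < 2$ enters crucially here: the intensity of $\rho$-grains, which is of order $\rho^{-d}$ times the density of $\chi$, can still be exponentially beaten by these $C^d$ factors only as long as the size ratio $\rho$ does not overwhelm the volume gap, which is exactly the regime $\rho < 2$. Once these geometric estimates are established, a standard union bound over potentially interfering pairs shows that the expected number of bad events per exploration step is $o(1)$ uniformly in $K$, so the good subcluster couples with the Poisson$(\beta^2)$ Galton-Watson tree with probability tending to one. The convergence $\beta_c(\rho,d,K)\to 1$ then follows from the convergence of percolation probabilities, since $S(\beta^2) > 0$ iff $\beta > 1$.
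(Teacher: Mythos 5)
Your plan correctly identifies the three structural blocks of the argument (upper bound via the two-type Galton--Watson process, convergence of its survival probability to $S(\beta^2)$, and a matching lower bound through an exploration of the cluster with interference control), and the intuition that $\rho$-grandchildren of a $\rho$-grain form an approximate Poisson$(\beta^2)$ offspring is the right one. Your heuristic derivation of $S(\beta,\rho,d)\to S(\beta^2)$ via the fixed-point equations for $(q_1,q_\rho)$ is morally the content of Lemmas \ref{l:GW}--\ref{l:GW-inter}, though to make it rigorous one needs some care (the paper proceeds by explicit coupling). The two real problems are elsewhere.

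The first is that the geometric estimate you propose is wrong in sign. You assert that concavity of $t\mapsto |rK\cap(tx+r'K)|^{1/d}$ (Brunn/Pr\'ekopa) yields an \emph{upper} bound $|rK\cap(x+r'K)|\le C^d$ off a small $K$-ball. But concavity of $g(t)=|K\cap(tz_0+K)|^{1/d}$ on its support, with $g(0)=1$ and $g(1)=0$, gives $g(t)\ge 1-t$, i.e.\ a \emph{lower} bound $|K\cap(tz_0+K)|\ge(1-t)^d$ on the overlap. It does not control the overlap from above. (Concretely, for $K=B$ the overlap at $\|z\|_2\approx\sqrt 2\,r_d$ is of order $(1/\sqrt2)^d$, far larger than the $(1-1/\sqrt2)^d$ your claimed inequality would give.) The paper obtains the needed upper bound by an entirely different mechanism: Markov's inequality on the set where $\1_K*\1_K$ is large (using $\int\1_K*\1_K=1$), combined with Riesz's rearrangement inequality to reduce to the Euclidean ball, and Lemma \ref{l:somme-unif-boule} for the concentration of $\|X_B+X_B'\|_2$ near $\sqrt 2$. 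This is what yields the set $H(d,K,\eta)$ with the sharp rate $(\sqrt2-\eta)^{-d}$, and the constraint $\rho<2$ enters exactly through $\sqrt\rho<\sqrt2-\eta$ in \eqref{e:shining}. Without a correct upper bound of this type your interference control cannot get off the ground, and in particular the passage from your concavity claim to the threshold $\rho<2$ is not supported.

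The second gap is that a stochastic domination by a single Galton--Watson tree together with ``a standard union bound over potentially interfering pairs'' is not enough for the lower bound. Interference accumulates over the \emph{entire} history of the exploration: when you explore an infinite cluster, a grain at generation $n$ can in principle collide with any of the unboundedly many previously revealed grains, and a union bound over a single step does not close. The paper handles this by a genuine renormalization scheme: particle positions are projected to $\R^2$ via Klartag's CLT for log-concave densities (Theorem \ref{t:Klartag-CLT}), the exploration is over-pruned so that at most $M$ $\rho$-particles live in each cell $A_{\widehat L}(i,j)$ and steps are truncated, and the good cells yield a supercritical oriented site percolation on $\overline{\cL}$ (Section \ref{s:embedding}, Lemmas \ref{l:brique-GM}, \ref{l:comparaisoninitiale-GM}, \ref{l:comparaisonsuite-GM}). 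It is this two-dimensional localization that turns ``the number of grains that could interfere with a given one'' into a \emph{bounded} quantity and makes the union bound legitimate. Your proposal contains none of the localization or renormalization machinery, and I do not see how to complete the lower bound without it.
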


The second item can be rephrased as follows:
\[
\lambda_c(\rho,d,K) \sim \kappa_c(\rho)^d \text{ as } d \to \infty.
\]
This is a strengthening and a generalization of Theorem \ref{t:GM} which only provides a logarithmic equivalent of $\lambda_c(\rho,d,B)$.
This is the more delicate result of the article.

\paragraph{Open questions and conjectures.} 
There is no hope to generalize such a result for any $\rho$.
Indeed, by \cite{GM-grande-dimension-2}, 
we know that the behavior of the critical threshold of percolation is not given 
by the critical threshold of the associated Galton-Watson process when $\rho>2$ and $K=B$.
However, for $K$ belonging to some families of convex, the result of Theorem \ref{t:GM-convex} could hold for $\rho$ in a larger interval.
Actually, 
the proof suggests that it is in the Euclidean case that the link between the Galton-Watson process and the cluster of the origin is the weakest.
We give more details in Section \ref{s:open_question} and a state a related conjecture in Section \ref{s:conjecture}.

\paragraph{Organization of the paper.} In Section \ref{s:tools-notations} we gather some notations and some results from analysis and
high dimension geometry. 
In Section \ref{s:preuve-penrose-convexe} we prove Theorem \ref{t:penrose-convex}.
In Section \ref{s:preuve-GM-convexe} we prove Theorem \ref{t:GM-convex}.
In both cases, the difficult part is to establish the lower bound on percolation probabilities.
The plans of the proofs are given in Section \ref{s:plan} and \ref{s:plan-GM} once the objects are defined.

\section{Some tools and notations}
\label{s:tools-notations}

\subsection{A couple of notations for random variables}
\label{s:notations}

When $d \ge 1$ and $K \in \cK(d)$ are given, we shall denote by $X_K, X'_K, X''_K$ independent random variables with uniform distribution on $K$.
In the whole of this paper, $\gaussienne$ will denote a standard Gaussian random vector in $\R^2$.

\subsection{Log-concavity}
\label{s:log-concave}

In this section, we gather some well known facts about log-concave functions. 
A map $f:\R^d \to \R_+$ is log-concave if, for all $x,y \in \R^d$ and all $\lambda \in (0,1)$, the following inequality holds:
\[
f(\lambda x + (1-\lambda) y) \ge f(x)^\lambda f(y)^{1-\lambda}.
\]
For example, the indicator of a convex set is log-concave.

If $g,h:\R^d \to \R_+$ are log-concave and measurable and if their convolution is defined everywhere, then $g*h$ is log-concave.
This is a consequence of Prékopa-Leindler inequality (see for example \cite{Gardner-Brunn-Minkowski} or \cite{Prekopa}).

In particular, for all $d \ge 1$ and $K \in \cK(d)$, $X_K$ and $X_K+X_K'$ (see Section \ref{s:notations} for notations) have a log-concave density.

Let $f:\R^d \to \R_+$ be log-concave.
Let us also assume that $f$ is symmetric, that is, for all $x \in \R^d$, $f(x)=f(-x)$.
Then, for all $x \in \R^d, f(x) \le f(0)$.
Indeed, for all $x \in \R^d$,
\begin{align*}
f(0) 
 & \ge f(x)^{1/2}f(-x)^{1/2} \text{ by log-concavity} \\
 & = f(x) \text{ by symmetry.}
\end{align*}

\subsection{A central limit theorem for random variables with log-concave density}

The total variation distance between two random variables $X$ and $Y$ with values in $\R^2$ is
\[
d_{TV}(X,Y) = \sup_{A \in \cB(\R^2)} |\P[X \in A] - \P[Y \in A]|.
\]
The following result is a rewriting of a weak version of Theorem 1.3 in \cite{Klartag-CLT}.

\begin{theorem} \label{t:Klartag-CLT}
There exists a sequence $(\epsilon_{CLT}(d))_d$ which tends to $0$ such that, 
for any centered random variable $X$ in $\R^d$ with log-concave density,
there exists a linear map $L:\R^d \to \R^2$ such that
\[
d_{TV}\left(L\left(X\right),\gaussienne\right) \le \epsilon_{CLT}(d).
\]
\end{theorem}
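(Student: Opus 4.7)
The statement is a weak corollary of Klartag's central limit theorem for convex bodies; the only genuine work is to reduce the hypothesis ``centered log-concave'' to the hypothesis ``centered, \emph{isotropic}, log-concave'' under which Klartag formulates his theorem.

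First, I would reduce to the isotropic case. Since $X$ has a log-concave density on $\R^d$, its support has non-empty interior and $X$ possesses moments of all orders, so its covariance matrix $\Sigma$ is finite and non-degenerate. Set $Y = \Sigma^{-1/2} X$. Then $Y$ is centered with identity covariance, and its density remains log-concave, since log-concavity is preserved by invertible linear change of variables (a direct consequence of Prékopa--Leindler, as recalled in Section \ref{s:log-concave}).

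Next, I would invoke Klartag's CLT (Theorem 1.3 of \cite{Klartag-CLT}) applied to $Y$. In the version we need it produces a sequence $(\epsilon_{CLT}(d))_d$, depending only on $d$ and tending to $0$, with the following property: the Haar measure, on the Grassmannian $G_{d,2}$, of the set of $2$-planes $E \subset \R^d$ satisfying
\[
d_{TV}(P_E Y, N_E) \le \epsilon_{CLT}(d)
\]
is at least $1-\epsilon_{CLT}(d)$. Here $P_E$ denotes the orthogonal projection onto $E$ and $N_E$ the standard Gaussian on $E$. In particular this set is non-empty as soon as $d$ is large. To conclude, I would pick any such good subspace $E$, fix an orthonormal basis of $E$ giving an isometric isomorphism $\iota:E \to \R^2$, and set
\[
L = \iota \circ P_E \circ \Sigma^{-1/2}.
\]
Then $L(X) = \iota(P_E Y)$. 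Since $\iota$ is an isometry it transports $N_E$ to $\gaussienne$ and preserves total variation distance, so $d_{TV}(L(X),\gaussienne) \le \epsilon_{CLT}(d)$.

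The main obstacle is, unsurprisingly, Klartag's theorem itself, whose proof relies on sharp thin-shell concentration estimates for isotropic log-concave measures and is far outside the scope of this paper. The reduction described above is the only piece of actual work, and all its ingredients (preservation of log-concavity under an invertible linear map, invariance of total variation under isometries, non-degeneracy of $\Sigma$) are routine. It would also be worth noting that, since the estimate $\epsilon_{CLT}(d)$ in Klartag's theorem is universal in the isotropic log-concave vector, our constant $\epsilon_{CLT}(d)$ is genuinely independent of $X$, which is precisely what the subsequent applications in this paper require.
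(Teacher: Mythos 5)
Your proof is correct and follows essentially the same route as the paper's: reduce to the isotropic case via an invertible linear normalization (you take $T=\Sigma^{-1/2}$ explicitly, the paper just asserts existence of such a $T$), apply Klartag's Theorem 1.3 to obtain a good $2$-plane, and compose the orthogonal projection with an isometry from that plane to $\R^2$. The only cosmetic difference is that you phrase the existence of a good subspace via the Grassmannian measure bound, which is a slightly more explicit way of saying the same thing.
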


We will apply this result to random variables $X_K$ or $X_K+X'_K$ (see Section \ref{s:notations} for notations).

\begin{proof}
In \cite{Klartag-CLT}, the result is stated for an isotropic random vector $X$ with log-concave density.
One says that $X$ is isotropic if $\E(X)=0$ and $\var(X)=I_d$.
The random variable $X$ may not be isotropic.
But as $X$ has a density, $\var(X)$ is positive definite.
As moreover $X$ is centered, there there exists an inversible linear map $T:\R^d\to\R^d$ such that $T(X)$ is isotropic.
Moreover $T(X)$ still have a log-concave density.
Therefore Theorem 1.3 of \cite{Klartag-CLT} applies to $T(X)$ and provides,
for any $\epsilon>0$ and any $d$ large enough depending only on $\epsilon$, an orthogonal projection $\pi$ on a plane $P \subset \R^d$
such that $\pi(T(X))$ is close in total variation distance to a standard Gaussian random vector $\gaussienne_P$ on $P$: 
$d_{TV}(\pi(T(X)),\gaussienne_P) \le \epsilon$.
From this result, one gets Theorem \ref{t:Klartag-CLT} with $L=\phi \circ \pi \circ T$ where $\phi$ is an isometry between $P$ and $\R^2$.
\end{proof}

\subsection{A concentration result for random variables with uniform distribution on convex sets}

Let $d \ge 1$.
One says that a random variable on $\R^d$ is isotropic if $\E(X)=0$ and $\var(X)=I_d$.
Let $K \in \cK(d)$.
Let $X_K$ be a random variable with uniform distribution on $K$.
We will need the following definition:
\begin{equation}\label{e:adapted}
T\text{ is adapted to }X_K\text{ if }T\text{ is an invertible linear map from }\R^d\text{ to }\R^d\text{ such that }T(X_K)\text{ is isotropic}.
\end{equation}
There always exists such maps.
The following result is a rewriting of a weak version of Theorem 1.4 in \cite{Klartag-CLT}.

\begin{theorem} \label{t:Klartag-concentration}
Let $\epsilon>0$.
Let $\ell \ge 1$.
There exists $d_0$ such that, 
for all $d \ge d_0$, all $K \in \cK(d)$ and all $T$ adapted to $X_K$,
\[
\P\left[\left|\frac{\left\| T (X_K^{1}+\dots+X_K^\ell)\right\|_2}{\sqrt{d}} - \sqrt{\ell} \right|\ge \epsilon \right]\le \epsilon
\]
where $X_K^{1},\dots,X_K^\ell$ are independent random variables with uniform distribution on $K$.
\end{theorem}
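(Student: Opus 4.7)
The plan is to reduce the statement to the thin-shell concentration of Klartag (Theorem 1.4 of \cite{Klartag-CLT}), applied to a single isotropic log-concave random vector in $\R^d$ built from the sum of the $X_K^i$.

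Set $Y_i := T(X_K^i)$ for $i=1,\ldots,\ell$. Since $T$ is adapted to $X_K$, the $Y_i$ are i.i.d., centered, and have covariance matrix $I_d$. Moreover each $Y_i$ has a log-concave density: the indicator of the convex set $K$ is log-concave, $X_K$ has the normalized indicator as density, and log-concavity is preserved by invertible linear pushforwards. Define
\[
S := Y_1+\dots+Y_\ell = T\bigl(X_K^1+\dots+X_K^\ell\bigr), \qquad \tilde S := S/\sqrt{\ell}.
\]
By the convolution stability of log-concavity recalled in Section \ref{s:log-concave} (a consequence of Prékopa-Leindler), the density of $S$ is log-concave, and this property is preserved by the rescaling. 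By independence and isotropy of the $Y_i$, $\tilde S$ is centered with covariance $I_d$. Thus $\tilde S$ is an isotropic random vector in $\R^d$ with log-concave density.

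I would then invoke Theorem 1.4 of \cite{Klartag-CLT}, which states that for every $\eta>0$ there exists $d_0(\eta)$ such that for all $d \ge d_0(\eta)$ and \emph{every} isotropic log-concave random vector $Z$ in $\R^d$,
\[
\P\left[\left|\frac{\|Z\|_2}{\sqrt d}-1\right| \ge \eta\right] \le \eta.
\]
Apply this to $Z = \tilde S$ with $\eta := \epsilon/\sqrt\ell$, which is at most $\epsilon$ because $\ell \ge 1$. Since $\|\tilde S\|_2/\sqrt d = \|S\|_2/\sqrt{\ell d}$, the event $\{\,|\|\tilde S\|_2/\sqrt d - 1| \ge \epsilon/\sqrt\ell\,\}$ rewrites exactly as $\{\,|\|S\|_2/\sqrt d - \sqrt\ell| \ge \epsilon\,\}$, which is the event in the statement. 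The crucial point is that Klartag's $d_0$ depends only on $\eta$ and not on the underlying log-concave law, which gives the uniformity in $K \in \cK(d)$.

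The only substantive content is Klartag's thin-shell theorem; the reduction is a routine verification of log-concavity (via Prékopa-Leindler, preservation under linear maps, and under rescaling) and of the isotropy of $\tilde S$. The only point requiring a bit of care is tracking the $\sqrt\ell$ normalization, so that the radius $\sqrt d$ in Klartag's conclusion corresponds to $\sqrt{\ell d}$ for $S$, which becomes the target $\sqrt\ell$ after dividing by $\sqrt d$.
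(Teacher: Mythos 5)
Your proof is correct and follows essentially the same route as the paper's: form $\tilde S = T(X_K^1+\dots+X_K^\ell)/\sqrt\ell$, verify it is isotropic with log-concave density via Pr\'ekopa--Leindler and stability under linear maps, and apply Klartag's thin-shell theorem (Theorem 1.4 of \cite{Klartag-CLT}) with $\eta=\epsilon/\sqrt\ell$. The paper states Klartag's bound quantitatively as $Cd^{-c\eta^2}$ and then takes $d_0$ large, while you quote the resulting qualitative statement directly; otherwise the arguments coincide.
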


\begin{proof} By Theorem 1.4 in \cite{Klartag-CLT}, there exists absolute constants $c,C >0$ such that the following holds.
For all $d \ge 1$, all isotropic random variable $X$ on $\R^d$ with a log-concave density and all $\eta \in [0,1]$,
\[
\P\left[\left|\frac{\left\| X\right\|_2}{\sqrt{d}} - 1 \right|\ge \eta \right]\le Cd^{-c\eta^2}.
\]

Let $d \ge 1, \ell \ge 1, K \in \cK(d)$ and $T$ a map adapted to $X_K$.
The random variable
\[
X=\frac{T(X_K^1+\dots+X_K^\ell)}{\sqrt \ell}
\]
is isotropic.
Furthermore, as $K$ is convex, the common density of the $X^i_K$ is log-concave.
Therefore the common density of the $T(X_K^i)$ is log-concave.
By stability by convolution (see Section \ref{s:log-concave}), the density of $T(X_K^1+\dots+X_K^\ell)$ is log-concave.
Therefore, the density of $X$ is log-concave.

Let $\epsilon \in (0,\sqrt{\ell})$. 
We apply the result stated at the beginning of the proof with $X$ defined as above and $\eta=\epsilon/\sqrt{\ell}$.
We get
 \[
\P\left[\left|\frac{\left\| T(X_K^1+\dots+X_K^\ell)\right\|_2}{\sqrt d \sqrt \ell} - 1 \right|\ge \frac{\epsilon}{\sqrt \ell} \right]
\le Cd^{-c\epsilon^2\ell^{-1}}.
\]
The result follows.
\end{proof}

\subsection{Rearrangement inequalities}

We will need to following version of Riesz's rearrangement inequality.
This is Theorem 3.7 in \cite{Lieb-Loss} in the simple setting of indicator function.
When $A$ is a Borel subsets of $\R^d$ with finite Lebesgue measure, we denote by $A^*$ the Euclidean ball centered at the origin such that $|A|=|A^*|$.

\begin{theorem} \label{t:riesz} Let $d \ge 1$. Let $A_1, A_2, A_3$ be Borel subsets of $\R^d$ with finite Lebesgue mesure.
Then
\[
\int_{\R^d} \1_{A_1} * \1_{A_2}(x)\1_{A_3}(x) \d x 
\le 
\int_{\R^d} \1_{A_1^*}*\1_{A_2^*}(x)\1_{A_3^*}(x) \d x.
\]
\end{theorem}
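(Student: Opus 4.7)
The plan is to prove the inequality by Steiner symmetrization, reducing the $d$-dimensional statement to a one-dimensional case and then iterating over a dense family of directions. I would write the left-hand side as
\[
I(A_1,A_2,A_3) = \int_{\R^d} \int_{\R^d} \1_{A_1}(y) \1_{A_2}(x-y) \1_{A_3}(x) \d y \d x,
\]
and for a unit vector $e \in \R^d$ define the Steiner symmetrization $S_e(A)$ of a bounded Borel set $A$ by replacing, for every $z \in e^\perp$, the one-dimensional slice $\{t \in \R : z+te \in A\}$ by the interval of the same length centered at the origin. Fubini gives $|S_e(A)| = |A|$. The target of the main comparison is $I(A_1,A_2,A_3) \le I(S_e A_1, S_e A_2, S_e A_3)$ for every $e$, after which the ball $A_i^*$ is reached by iteration.

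The first step is the one-dimensional base case: for bounded Borel $B_1, B_2, B_3 \subset \R$ with symmetric rearrangements $B_i^*$, I would prove $I(B_1,B_2,B_3) \le I(B_1^*,B_2^*,B_3^*)$ by approximating each $B_i$ by a finite disjoint union of intervals and running a continuous symmetrization in which each component is translated toward the origin at a uniform speed, components merging when they collide; a case analysis shows that the derivative of $I$ with respect to the sliding parameter is non-negative. Once this is available, the $d$-dimensional inequality for a single direction follows by Fubini. Writing points of $\R^d$ as $w+te$ with $w \in e^\perp$,
\[
I(A_1,A_2,A_3) = \int_{e^\perp \times e^\perp} \left( \int_{\R \times \R} \1_{A_1^{w_1}}(t_1) \1_{A_2^{w_2-w_1}}(t_2-t_1) \1_{A_3^{w_2}}(t_2) \d t_1 \d t_2 \right) \d w_1 \d w_2,
\]
where $A_i^{u}$ denotes the slice of $A_i$ above $u \in e^\perp$. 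Applying the one-dimensional inequality to the inner integral and using $(A_i^{u})^* = (S_e A_i)^{u}$ yields the desired monotonicity under $S_e$.

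It then remains to iterate. Choosing a sequence of directions $(e_n)$ sufficiently well distributed on the unit sphere and setting $A_i^{(n)} = S_{e_n} \circ \cdots \circ S_{e_1}(A_i)$, a classical convergence result (see Chapter 3 of \cite{Lieb-Loss}) implies $\1_{A_i^{(n)}} \to \1_{A_i^*}$ in $L^1$; since all iterates remain in a common bounded region, dominated convergence converts the monotonicity along the sequence into the final inequality. The hard part of this program is the one-dimensional base case: the sliding argument is geometrically intuitive, but making the continuous deformation and the monotonicity of $I$ rigorous requires a genuine case analysis on how components of $B_2$ interleave with those of $B_1$ and $B_3$ when they merge. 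The convergence of iterated Steiner symmetrizations is also non-trivial but is a well-known classical result, which is why the paper cites \cite{Lieb-Loss} rather than reproving Theorem \ref{t:riesz} from scratch.
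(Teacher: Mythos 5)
The paper does not prove this result: Theorem \ref{t:riesz} is quoted verbatim (restricted to indicator functions) from Theorem 3.7 of \cite{Lieb-Loss}, and no argument is given, so there is no paper proof to compare against. Your sketch describes the standard textbook route --- reduce to one dimension via Fubini and Steiner symmetrization, prove the one-dimensional case by a sliding/merging deformation of intervals, then iterate symmetrizations until the sets converge to balls. The Fubini identity you write is correct, and so is the (implicit but needed) observation that iterates of a set contained in a ball $B(0,R)$ stay inside $B(0,R)$. Two points deserve a flag. First, the theorem is stated for Borel sets of finite Lebesgue measure rather than bounded sets, so your construction requires a preliminary truncation (intersect each $A_i$ with a large ball and pass to the limit by monotone convergence); this is routine, but your sketch starts from bounded sets. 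Second, convergence of iterated Steiner symmetrizations to the ball is more delicate than ``any sufficiently well-distributed sequence of directions works'': one must exhibit a single sequence of directions under which all three $A_i$ converge simultaneously in $L^1$, and the known constructions (random directions, alternating symmetrization with rotation, or a fixed universal sequence of directions) each require a genuine argument of their own. You correctly identify the one-dimensional base case and this convergence step as the hard parts; that is precisely why the paper delegates the whole theorem to \cite{Lieb-Loss} rather than reproving it.
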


\subsection{A conjecture}
\label{s:conjecture}

This section in not necessary for the main results of the article.
Let $d \ge 1$ and $K \in \cK(d)$. Let $\|\cdot\|_K$ be the norm defined by $\|x\|_K = \inf\{r >0 : x \in r K\}$. 
In particular $\|\cdot\|_B=\|\cdot\|_2$.
For any $r>0$, by Theorem \ref{t:riesz},
\[
\P[\|X_K+X_K'\|_K \le r] = \int_{\R^d} \1_K * \1_K(s)\1_{rK}(s) \d s \le \int_{\R^d} \1_B * \1_B(s)\1_{rB}(s) \d s = \P[\|X_B+X_B'\|_2 \le r].
\]
In other words, $\|X_B+X_B'\|_B$ is stochastically dominated by $\|X_K+X_K'\|_K$.
Similar results holds for the sum of more copies of $X_B$ or $X_K$.
Numerical simulations suggest the following related conjecture.
For any $p \in [1,+\infty]$ we denote by $\|\cdot\|_p$ the usual $\ell^p$ norm on $\R^d$ and by $B_p \in \cK(d)$ 
the associated ball of volume $1$ centered at $0$.
In particular, $B_p=B$.

\begin{conjecture} For any $r>0$, the map from $[1,+\infty]$ to $[0,1]$ defined by 
\[
p \mapsto \P\left[\|X_{B_p}+X'_{B_p}\|_p \le r\right]
\]
is increasing on $[1,2]$ and decreasing on $[2,+\infty]$.
\end{conjecture}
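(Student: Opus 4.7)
My plan is to split the conjecture into (i) the extremal character of $p=2$ and (ii) the directional monotonicity on each side of $p=2$. The first part follows at once from Theorem~\ref{t:riesz}. Applying that theorem with $A_1 = A_2 = B_p$ (each of Lebesgue measure $1$) and $A_3 = rB_p$ (of measure $r^d$), the rearrangements are $A_1^* = A_2^* = B$ and $A_3^* = rB$, so that
\[
\P\left[\|X_{B_p}+X'_{B_p}\|_p \le r\right] \le \P\left[\|X_B+X'_B\|_2 \le r\right]
\]
for every $p \in [1,+\infty]$. Thus the value at $p = 2$ is already a global maximum, which is consistent with the conjecture's peak behaviour there; the substantive content is the monotonicity on $[1,2]$ and on $[2,+\infty]$.

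For the monotonicity, I would first remove the scaling implicit in the normalization $|B_p| = 1$. Write $b_p$ for the unnormalized $\ell^p$ unit ball in $\R^d$ and $V(p) = |b_p| = 2^d\Gamma(1+1/p)^d/\Gamma(1+d/p)$, so that $B_p = V(p)^{-1/d}b_p$. A change of variables gives
\[
\phi(p) := \P\left[\|X_{B_p}+X'_{B_p}\|_p \le r\right] = V(p)^{-2}\int_{\R^d}\int_{\R^d}\1_{b_p}(y)\1_{b_p}(z)\1_{rV(p)^{1/d}b_p}(y+z) \d y \d z.
\]
The next step is to differentiate $\phi$ in $p$ and express $\phi'(p)$ as a sum of three surface contributions: two integrals over $\partial b_p$ weighted by the outward normal speed of the boundary as $p$ varies (one for each factor $\1_{b_p}$), plus a term coming from the deformation of the target set $rV(p)^{1/d}b_p$; the logarithmic derivative $V'(p)/V(p)$ enters through the prefactor and through the dilation. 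A natural sanity check for this computation is that $\phi'(2) = 0$, which follows from the extremality established in the first paragraph.

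The main obstacle is signing $\phi'(p)$ for $p \ne 2$. Theorem~\ref{t:riesz} is blind to the $\ell^p$ family and only identifies the global extremum, so a genuinely new ingredient is required. One plausible route is the classical probabilistic representation of the uniform law on $b_p$: if $\eta_1,\dots,\eta_d$ are i.i.d.\ with density proportional to $e^{-|t|^p/p}$ and $E$ is an independent standard exponential, then $(\eta_1,\dots,\eta_d)/(pE+\sum_i|\eta_i|^p)^{1/p}$ is uniform on $b_p$. Coupling two such representations at parameters $p$ and $p + \d p$ should reduce the monotonicity to a one-dimensional correlation inequality for the generalized Gaussian $\eta$, in which the concavity of $p\mapsto\|x\|_p$ for fixed $x$, the Prékopa--Leindler log-concavity of the relevant densities (Section~\ref{s:log-concave}), and the variation of $V(p)$ all interact. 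Turning this heuristic into a rigorous, signed one-dimensional estimate is the part I expect to be the genuine difficulty, and presumably the reason the statement is posed here only as a conjecture.
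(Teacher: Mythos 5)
This statement is explicitly a conjecture, and the paper does not prove it. The paper records only the global extremality at $p=2$: using Theorem~\ref{t:riesz} with $A_1=A_2=K$ and $A_3=rK$ (and hence $A_i^*=B$, $A_3^*=rB$), one obtains $\P[\|X_K+X_K'\|_K\le r]\le\P[\|X_B+X_B'\|_2\le r]$ for every $K\in\cK(d)$; specializing to $K=B_p$ gives the student's inequality. The paper then introduces the limit quantity $N(p)$, states the analogous one-dimensional monotonicity as a ``related easy looking conjecture,'' and says plainly: ``We have not been able to prove this result.'' So there is no proof in the paper for you to match.

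Your first paragraph is thus exactly the same observation the paper makes: the rearrangement inequality pins down the $p=2$ maximum but says nothing about the direction of variation on either side. You are also right that this cannot be bootstrapped into monotonicity, because Theorem~\ref{t:riesz} compares an arbitrary body to the ball and never compares two different $\ell^p$ balls to each other. The remainder of your proposal is an honest sketch rather than a proof: differentiating in $p$ (with the normalization $|B_p|=1$ removed) and trying to sign the resulting boundary integral, possibly via the Barthe--Gu\'edon--Mendelson--Naor representation of the uniform law on $b_p$. This is a reasonable direction --- the paper itself invokes that representation to compute $N(p)$ --- but you correctly flag that the sign of the derivative is the open point. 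Nothing in your argument fills that gap, and nothing in the paper does either; that is precisely why the statement is posed as a conjecture rather than a proposition. In short: you correctly reproduce the paper's partial result and correctly locate the missing step, but you do not (and the paper does not) close it.
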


When $d$ tends to $\infty$, $\|X_{B_p}+X'_{B_p}\|_p$ converges in probability to a constant $N(p)$.
Using the representation of uniform random variables on $B_p$ given in \cite{Barthe-al-lp-ball} one easily gets, for $p \in [1,\infty)$,
\[
N(p) = \left[\frac{p}{4\Gamma^2(1+p^{-1})} \int_{\R^d \times \R^d} |x+y|^p e^{-|x|^p-|y|^p} \d x \d y\right]^{\frac 1 p}
\]
where $\Gamma$ is the Gamma function.
One also easily check that $N(\infty)=2$.
Here is a related easy looking conjecture: $N$ is decreasing on $[1,2]$ and increasing on $[2,\infty]$.
We have not been able to prove this result. 
However, as a consequence of the above discussion, on can check that one always has $N(p) \ge N(2)=\sqrt 2$.

We formulate similar conjecture for the sum of a higher number of copies of $X_{B_p}$.

\subsection{Sum of independent random variable uniformly distributed on an Euclidean ball}

We state and proof here a well known result for which we have no ready reference.
Recall that $B(d)$ denotes the Euclidean closed ball of $\R^d$ centered at the origin such that $|B(d)|=1$.

\begin{lemma} \label{l:somme-unif-boule}
For all $\epsilon>0$,
$
\P[ X_{B(d)} + X'_{B(d)} \in (\sqrt{2}-\epsilon) B(d)] \to 0 \text{ as } d \to \infty.
$
\end{lemma}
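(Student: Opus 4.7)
The plan is to reduce the statement to a direct application of Theorem \ref{t:Klartag-concentration} with $\ell=2$ and $K=B(d)$, exploiting the fact that on the Euclidean ball everything is explicit.

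First, let $R_d$ denote the Euclidean radius of $B(d)$, defined by $v_d R_d^d = 1$ where $v_d$ is the volume of the unit Euclidean ball. Then $(\sqrt{2}-\epsilon)B(d)$ is the Euclidean ball of radius $(\sqrt{2}-\epsilon)R_d$, so the lemma is equivalent to
$$
\P\bigl[\|X_{B(d)}+X'_{B(d)}\|_2 \le (\sqrt{2}-\epsilon)R_d\bigr] \longrightarrow 0.
$$

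Second, I would exhibit an explicit map $T$ adapted to $X_{B(d)}$ in the sense of \eqref{e:adapted}. By the rotational invariance of $B(d)$, $\var(X_{B(d)}) = \sigma_d^2\, I_d$ with $\sigma_d^2 = \E[\|X_{B(d)}\|_2^2]/d$, and a one-line spherical-shell computation together with $v_d R_d^d = 1$ gives $\E[\|X_{B(d)}\|_2^2] = dR_d^2/(d+2)$, hence $\sigma_d^2 = R_d^2/(d+2)$. Consequently $T_d := (\sqrt{d+2}/R_d)\,\mathrm{Id}_{\R^d}$ is adapted to $X_{B(d)}$.

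Third, apply Theorem \ref{t:Klartag-concentration} with this $T_d$, $\ell=2$, and a small auxiliary $\eta>0$. For $d$ large enough, with probability at least $1-\eta$,
$$
\left|\frac{\|T_d(X_{B(d)}+X'_{B(d)})\|_2}{\sqrt{d}} - \sqrt{2}\right|\le \eta.
$$
Since $T_d$ is a scalar multiple of the identity, $\|T_d(X_{B(d)}+X'_{B(d)})\|_2 = (\sqrt{d+2}/R_d)\,\|X_{B(d)}+X'_{B(d)}\|_2$. Rearranging, $\|X_{B(d)}+X'_{B(d)}\|_2/R_d$ concentrates around $\sqrt{2}\,\sqrt{d/(d+2)}$, which tends to $\sqrt{2}$. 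Choosing $\eta$ sufficiently small (depending on $\epsilon$) and $d$ large so that $\sqrt{2}\sqrt{d/(d+2)} - \sqrt{2}\eta \ge \sqrt{2}-\epsilon$ yields the conclusion.

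There is no genuine obstacle here: the symmetry of $B(d)$ trivialises the search for $T$, and all the analytic content sits inside Theorem \ref{t:Klartag-concentration}. The only point requiring a little care is tracking the scaling factor $R_d$ versus $\sqrt{d}$ so that the limiting norm comes out to be exactly $\sqrt{2}\,R_d$ and not some other multiple.
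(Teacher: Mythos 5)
Your proof is correct, and it is logically valid within the paper (Theorem~\ref{t:Klartag-concentration} is proved directly from Klartag's Theorem 1.4 and does not depend on Lemma~\ref{l:somme-unif-boule}, so there is no circularity). It does, however, take a genuinely different and noticeably heavier route than the paper. You invoke the full concentration estimate of Theorem~\ref{t:Klartag-concentration} --- which rests on Klartag's deep thin-shell result for general isotropic log-concave vectors --- for a statement that concerns only the Euclidean ball. The paper instead gives a short self-contained computation: writing $\|X+X'\|_2^2 = \|X\|_2^2 + \|X'\|_2^2 + 2\langle X, X'\rangle$, it bounds the cross term by $\E[|\langle X, X'\rangle|] \le \E[|\langle X,e_1\rangle|] \le d^{-1/2}$ using isotropy and the coordinate-sum identity $\sum_i \langle X,e_i\rangle^2 \le 1$, and cites the standard fact that $\|X\|_2^2 \to 1$ for a uniform point in the unit ball. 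What your approach buys is economy of thought (everything is delegated to a black box already present in the paper); what the paper's approach buys is elementary self-containedness, which is more appropriate since the paper explicitly says it is stating ``a well known result for which we have no ready reference'' --- citing a deep theorem there would defeat the purpose. One small arithmetic slip in your write-up: after rearranging, the lower bound on $\|X+X'\|_2/R_d$ on the good event is $(\sqrt{2}-\eta)\sqrt{d/(d+2)} \ge \sqrt{2}\sqrt{d/(d+2)} - \eta$, not $\sqrt{2}\sqrt{d/(d+2)} - \sqrt{2}\eta$; this is harmless since your weaker bound is still eventually $\ge \sqrt{2}-\epsilon$, but the factor $\sqrt{2}$ in front of $\eta$ is spurious.
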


\begin{proof}
Let $X(d)$ and $X'(d)$ be independent random variable with uniform distribution on the unit ball $\widetilde B(d)$ of $\R^d$.
We will prove that $\|X(d)+X'(d)\|_2$ tends to $\sqrt{2}$ in probability when $d$ tends to $\infty$.
By a scaling argument, this yields the required result.

By independence and isotropy, 
\begin{equation} \label{e:course}
\E\big[\big|(X(d),X'(d))\big|\big] = \E\big[ \big|(X(d), \|X'(d)\|_2 e_1)\big| \big] \le \E\big[ \left|(X(d), e_1)\right| \big]
\end{equation}
where $(\cdot,\cdot)$ denotes the scalar product in $\R^d$ and $e_1$ the first vector of the canonical basis.
But 
\[
 (X(d), e_1)^2 + \cdots	 +  (X(d), e_d)^2 = \|X(d)\|_2^2 \le 1.
\]
Therefore, by symmetry, 
$
\E\big[(X(d), e_1)^2\big] \le d^{-1} \to 0
$
and then
$
\E\big[ \left|(X(d), e_1)\right| \big] \to 0.
$
By \eqref{e:course} and Markov inequality, we deduce from the above limit that $(X(d),X'(d))$ tends to $0$ in probability.
As moreover $\|X(d)\|^2_2$ and $\|X'(d)\|^2_2$ tends $1$ in probability, 
we get that $\|X(d)+X'(d)\|^2_2$ tends to $2$ in probability as $d$ tends to $\infty$.
\end{proof}

\subsection{Notations for branching random walks (\BRW)}
\label{s:brw}

Let $\lambda>0$ and $d \ge 1$.
Let $\Delta$ be a random variable on $\R^d$.
Let $\cS$ be a finite subset of $\R^d$.
Let $k$ be an integer.
Let $L$ be a map from $\R^d$ to $\R^2$.
Let $M>0$.
	
\begin{itemize}
\item $\tau^\lambda$ denotes a Galton-Watson tree with Poisson$(\lambda)$ progeny.
The generation $|x|$ of a particle $x$ of $\tau^\lambda$ is the graph distance from $x$ to the root.
If $x$ is not the root we denote by $\parent{x}$ the parent of $x$, that is the unique particle of generation $|x|-1$
on the shortest path from the root to $x$.
\item We see a \BRW\ as a a random tree (or forest) where each node $x$ is called a particle and possesses a location $V(x) \in \R^d$.
The notation $\overline\tau^{\lambda, d,\Delta ; \cS}$ denotes a \BRW\ such that:
\begin{itemize}
\item The process starts with one particle located at each point of $\cS$.
\item Each particle $x$ has $N(x)$ children located at $V(x)+\Delta(x,1), \dots, V(x)+\Delta(x,N(x))$
where the distribution of $N(x)$ is Poisson$(\lambda)$, where the $\Delta(x,\cdot)$ have the same distribution as $\Delta$ and 
where all variables $N(\cdot)$ and $\Delta(\cdot,\cdot)$ are independent.
\end{itemize}
If $A$ is a subset of $\R^d$, we write
\[
\overline\tau^{\lambda,d,\Delta ; \cS}(A)
\]
as a short notation for
\[
\sum_{x \in \overline\tau^{\lambda,d,\Delta ; \cS}	} \1_A(V(x)),
\]
that is the number of particles of the \BRW\ located in $A$.
\item $\overline\tau^{\lambda,d,\Delta}$ is a short notation for $\overline\tau^{\lambda,d,\Delta;\{0\}}$.
\item If $\overline\tau$ is a \BRW, then $\overline\tau_{\le k}$ denotes the restriction of $\overline\tau$ to the $k$ first generations
and $\overline\tau_k$ denotes its restriction to the $k$-th generation. 

\end{itemize}

Let moreover $\beta>0$ and $\rho>1$. 
In Section \ref{s:preuve-GM-convexe}, we will use a \BRW\ which will alternate $\rho$-particles for even generations and $1$-particles for odd generations.
\begin{itemize}
\item $\overline\tau^{\beta,\rho,d,\Delta;\cS}$ denotes a two-types \BRW.
There are $1$-particles and $\rho$-particles.
It starts with one $\rho$-particle located at each point of $\cS$.
\begin{itemize}
\item Each $1$-particle $x$ has $N(x)$ children which are $\rho$-particles located at $V(x)+\Delta(x,1), \dots, V(x)+\Delta(x,N(x))$
where the distribution of $N(x)$ is Poisson$(\beta \sqrt\rho^{-d})$, where the $\Delta(x,\cdot)$ have the same distribution as $\Delta$ and 
where all variables $N(\cdot)$ and $\Delta(\cdot,\cdot)$ are independent.
\item Each $\rho$-particle $x$ has $N(x)$ children which are $1$-particles located at $V(x)+\Delta(x,1), \dots, V(x)+\Delta(x,N(x))$
where the distribution of $N(x)$ is Poisson$(\beta \sqrt\rho^d)$, where the $\Delta(x,\cdot)$ have the same distribution as $\Delta$ and 
where all variables $N(\cdot)$ and $\Delta(\cdot,\cdot)$ are independent.
\end{itemize}
\item As above we will omit $\cS$ when $\cS=\{0\}$.
\item We define the event 
\begin{align*}
\smallrho{\overline\tau^{\beta,\rho,d,\Delta;\cS}_{\le k}}M\rho
=\{ & \text{the number of }\rho\text{ particles of } \overline\tau^{\beta,\rho,d,\Delta;\cS}_{\le k} \text{ is at most }M\\
& \text{ and the number of children of any }\rho\text{ particle is at most }M \sqrt\rho^d\}.
\end{align*}
\end{itemize}

We will need some ordering on the nodes of a tree.
We can for example formalize trees using Neveu formalism.
In this formalism, nodes are finite sequences of positive integers (the idea is that $(4,2)$ is the second child of the forth child of the root $\root$).
See for example Section 2.2 of \cite{Shi-Saint-Flour}.
We can then order the nodes by lexicographic order.
We will refer to this order as Neveu order.

\section{Proof of Theorem \ref{t:penrose-convex}}
\label{s:preuve-penrose-convexe}

\subsection{Framework and $\widehat C^0$}
\label{s:framework:t:penrose-convex}

Let $\lambda>0$, $d \ge 1$ and $K \in \cK(d)$.
Let $\chi$ be a Poisson point process on $\R^d$ with intensity measure $\lambda \d x$.
Note that $\xi=\{(c,1/2), c \in \chi\}$ is a Poisson point process on $\R^d \times (0,+\infty)$ 
with intensity measure $\d x \otimes \lambda \delta_{1/2}$.
Set $\chi^0 = \chi \cup \{0\}$.
As in Section \ref{s:setting}, we define an unoriented graph structure on $\chi^0$ by putting an edge between $x,y \in \chi^0$ if $y-x \in K$.
Let $\widehat C^0$ be the connected component of the graph $\chi^0$ that contains $0$.
We are interested (see \eqref{e:ahouicetruc} with $r=1/2$) in
\[
\P[\#\widehat C^0=\infty] 
\]
and in
\[
\lambda_c(\delta_{1/2},d,K) = \inf \{\lambda>0 : \P[\#\widehat C^0=\infty] > 0\}.
\]

%
%
%

\subsection{Branching random walk and cluster of the origin}
\label{s:cluster-brw}

The content of this section (with the exception of Section \ref{s:open_question}) is essentially contained in Section 5 of \cite{Penrose-high-dimensions}.
Roughly, the aim is to explain than $\widehat C^0$, or a subset of $\widehat C^0$, can be seen as the set of positions of a pruned \BRW.
The framework is the same as in Section \ref{s:framework:t:penrose-convex}.

\subsubsection{Basic construction}
\label{s:cluster-brw-basic}

\paragraph{Exploring a subset of $\widehat C^0$.} 
We explore some part of $\widehat C^0$ by revealing successively parts of the point process $\chi$.
We will define inductively a tree with root $0$ and where each node is a point of $\widehat C^0$.
We will call the graph distance from a node $x$ to the root $0$ the generation of $x$.
Start with $A=\{0\}$ and $B=\emptyset$.
At each stage of the algorithm, perform the following steps.
\begin{enumerate}
\item Select one of the elements of $A$ according to any given rule and call it $x$.
\item Add each point of 
\[
\chi \cap \big((x+K) \setminus (B+K)\big)
\]
to $A$ where 
\[
B+K = \bigcup_{b \in B} b+K.
\]
Put an arrow from $x$ to each of the points $y$ we have just added to $A$.
We think about each such $y$ as a child of $x$.

The idea is the following. Before performing Step 2:
\begin{itemize}
\item The points $y \in \chi$ such that there is an edge between $x$ and $y$ are the points of $\chi \cap (x + K)$.
\item All the points of  $\chi \cap (B+K)$ have been revealed. All of them are in $A \cup B$ and in $\widehat C^0$. 
\item None of the points of $\chi \setminus (B+K)$ have been revealed.
\end{itemize}
\item Move $x$ from $A$ to $B$.
\item If $A$ is non-empty, go back to Step 1.
\end{enumerate}
There are two cases.
\begin{itemize}
\item The algorithm terminates. In that case it provides a tree whose set of nodes is $B$. We have $B=\widehat C^0$.
\item The algorithm does not terminate. In that case it provides a sequence of growing trees. We can consider the limit tree.
Denote its set of nodes by $B$. We have $B \subset \widehat C^0$ and $\#B = \#\widehat C^0 = \infty$.
\end{itemize}
In all cases, we have 
\[
B \subset \widehat C^0 \text{ and } \#B = \#\widehat C^0.
\]
Note that, when $\widehat C^0$ is infinite, it may happen that $B \neq \widehat C^0$.
This depends on the rule used to select $x$ in Step 1.
If we use 
the rule "Select one of the elements of $A$ of minimal generation according to any given rule"
or the rule "Select the element of $A$ of minimal Euclidean distance to $0$",
then $B=\widehat C^0$.
We shall not need this result and, on the contrary, we will find it convenient to have some freedom in the choice of the point $x$ in Step 1.

\paragraph{Building a subset of $\widehat C^0$ by pruning a \BRW.}
The key is the following. Before Step 2 in the above exploration process, condition to what we have revealed, 
\[
\chi \cap \big((x+K) \setminus (B+K)\big)
\]
is a Poisson point process of intensity 
\[
\lambda \1_{(x+K) \setminus (B+K)}(y) \d y.
\]
Furthermore, generating such a Poisson point process can be done as follows.
\begin{itemize}
\item Generate $N$ a random variable with Poisson$(\lambda)$ distribution. Note that $\lambda = \lambda |x+K|$ as $|K|=1$.
\item Generate $Y_1, \dots, Y_N$ i.i.d.\ random variables with uniform distribution on $x+K$.
\item The random set $\{Y_i : Y_i \not\in B+K\}$ has the required distribution.
\end{itemize}

This enables us to build a set which, under a suitable coupling (which we will assume implicitly henceforth) is a subset of $\widehat C^0$.
Consider the \BRW\ 
\[
\overline\tau=\overline\tau^{\lambda,d,X_{K}}
\]
where $X_{K}$ is uniformly distributed on $K$ (see Section \ref{s:brw} for notations on \BRW).
Start with $A=\{\root\}$ where $\root$ is the root of $\overline\tau$ (which is located at $0$) and $B=\emptyset$.
At each stage of the algorithm, perform the following steps.
\begin{enumerate}
\item Select one of the elements of $A$ according to any given rule and call it $x$.
\item Consider successively the children (in the \BRW) $y$ of $x$ in any order and for each of them, do the following: 
\[
\text{if there does not exist }x' \in B\text{ such that }V(y) \in V(x')+K\text{, then add }y\text{ to }A.
\]
Let us introduce some vocabulary for future reference.
We say that the other children of $x$ are rejected
\begin{equation}
\label{e:interference}
\text{because of {\em interference} between } x \text{ and } x' \text{ or because of {\em interfence} with }x'.
\end{equation}
We say that 
\begin{equation}
\label{e:regino_interference}
V(x')+K \text{ is the {\em region of interference} of } x'.
\end{equation}
In other words, we reject $x$ because of interference with $x'$ when $x$ belongs to the region of interference of $x'$.
A large part of our proof will be devoted to establish the fact that, in high dimension, there is not too much interference.
\item Move $x$ from $A$ to $B$.
\item If $A$ is non-empty, go back to Step 1.
\end{enumerate}
If the algorithm terminates it provides a set $B$.
Otherwise, it provides an increasing sequence of $B$ (one $B$ for each stage) and we define $B$ as the union of all those $B$ at different stages.
In any case,
\[
\{V(x), x \in B\} \subset \widehat C^0 \text { and } \#B = \#\widehat C^0.
\]
Note that $\{V(x), x \in B\}$ is the set of positions of a pruned version of $\overline\tau$.

\subsubsection{An open question}
\label{s:open_question}

This section is not necessary for the main result of the article.
Let $d \ge 1$ and $K \in \cK(d)$. By Theorem \ref{t:riesz},
\[
\P[X_K+X'_K \in K] = \int_{\R^d} \1_{K} * \1_{K}(s)\1_{K}(s) \d s \le  \int_{\R^d} \1_B * \1_B(s)\1_B(s) \d s = \P[X_B+X_B' \in B].
\]
We now use the setting and notations of Section \ref{s:cluster-brw-basic}.
To construct $\widehat C^0$, we have to reject because of interference the grandchildren of the root of $\overline\tau^{\lambda,d,X_K}$
that belong to $K$.
If we condition by the underlying tree $\tau^{\lambda,d,X_K}$, the probability that a given grandchildren is rejected for this reason is
$\P[X_K+X'_K \in K]$. 
As seen above, this probability is maximal when $K=B$.
This remark and further similar considerations may suggest that the percolation probability could be minimal when $K=B$
and therefore the percolation threshold $\lambda_c$ could be maximal when $K=B$.
This would be coherent with numerical simulations in low dimension. 
In particular, by reducing percolation to a local criteria and then using Monte Carlo methods, Balister, Bollob\'{a}s and Walters 
provided in \cite{Balister-al-square-disc}
the following $99.99\%$ confidence intervals for $\lambda_c(\delta_{1/2},K,2)$:
$[4.508,4.515]$ when $K=B$ and $[4.392,4.398]$ when $K=[-1/2,1/2]^2$.

\subsubsection{Constructing a smaller set by over-pruning}
\label{s:cluster-brw-over}

We introduce here a variant of the previous constructions.
The general idea is that, by rejecting more than necessary at some stage, we may be able to have a better control on interference at a later stage.

\paragraph{Exploring a smaller subset of $\widehat C^0$.} 
Start with $A=\{0\}$ and $B=\emptyset$.
At each stage of the algorithm, perform the following steps.
\begin{enumerate}
\item Select one of the elements of $A$ according to any given rule and call it $x$.
\item Let $K(x)$ be a subset of $x+K$ defined according to any given rule. It can depend on $A, B$ and $x$.
Consider successively the points of
\[
\chi \cap \left(K(x) \setminus \bigcup_{b \in B} K(b)\right)
\]
in any order and add some of them to $A$ according to any given rule.
Put an arrow from $x$ to each of the points we have just added.

With the vocabulary introduced above, the region of interference of $x$ is $K(x)$.

\item Move $x$ from $A$ to $B$.
\item If $A$ is non-empty, go back to Step 1.
\end{enumerate}
We get in the end some set that we denote by $B^-$.
We have
\[
B^- \subset \widehat C^0 \text{ and } \#B^- \le \widehat C^0.
\]

\paragraph{Building a smaller subset of $\widehat C^0$ by over-pruning a \BRW.}  
As in Section \ref{s:cluster-brw-basic}, consider the \BRW\ 
$
\overline\tau=\overline\tau^{\lambda,d,X_{K}}
$
and start with $A=\{\root\}$ and $B=\emptyset$.
At each stage of the algorithm, perform the following steps.
\begin{enumerate}
\item Select one of the elements of $A$ according to any given rule and call it $x$.
\item Let $K(x)$ be a subset of $V(x)+K$ defined according to any given rule. 
We say that this is the {\em region of interference} of $x$.
It can depend on $A, B$ and $x$.
Consider successively in any given order the children $y$ of $x$ whose position belongs to
\begin{equation}\label{e:keller}
K(x) \setminus \left(\bigcup_{b \in B} K(b)\right)
\end{equation}
and, for each of them, decide according to any given rule whether one adds it to $A$ or not.

We thus distinguish (somehow artificially) two kinds of over-pruning:
\begin{enumerate}
\item By taking $K(x)$ smaller than $V(x)+K$ we may reduce the number of children of $x$ in the pruned \BRW.
However, as we reduced the region of interference of $x$, we may reject less points at a later stage of the algorithm.
\item Similarly, by adding to $A$ only some of the children of $x$ which belongs to \eqref{e:keller}, 
we further reduce the number of children of $x$ in the pruned \BRW.
However, we can gain some properties on the positions of the points of the pruned \BRW\ which may help us controlling the interference at a
later stage of the algorithm.
\end{enumerate}
Both over-pruning amounts to rejecting some of the children $y$ of $x$ belonging to 
\[
(V(x)+K) \setminus  \bigcup_{b \in B} K(b)
\]
according to some give rule and we will describe it this way in the specific over-pruning we will use in this work.
%
%
%

\item Move $x$ from $A$ to $B$.
\item If $A$ is non-empty, go back to Step 1.
\end{enumerate}
If the algorithm terminates it provides a set $B$.
Otherwise, it provides an increasing sequence of $B$ (one $B$ for each stage) and we define $B$ as the union of all those $B$ at different stages.
In any case,
\[
\{V(x), x \in B\} \subset \widehat C^0 \text { and } \#B \le \#\widehat C^0.
\]
The set $\{V(x), x \in B\}$ is the set of positions of an over-pruned version of $\overline\tau$.

\paragraph{Intuitive rephrasing.} Let us rephrase one of the key ideas at a more intuitive level.
When we look for the children of a particle $x$, we reveal the relevant Poisson point process in a (subset of) the 
interference region $K(x)$. Therefore at any later stage of the construction we have to reject any particle which fall in $K(x)$.
However, we do not have to care about any rejected particle or about any particle whose children we never consider: they generate
no interference.

\subsection{Proof of the upper bound on percolation probability}
\label{s:penrose-convexe-upper}

The aim is to prove the following proposition, which is the easy part of Theorem \ref{t:penrose-convex}.
We refer to Section \ref{s:penrose-convexe-statement} for notations.

\begin{prop} \label{p:penrose-convexe-upper}
\begin{itemize}
\item For all $\lambda>0, d \ge 1$ and $K \in \cK(d)$,
$
\P[C^0(\lambda,\delta_{1/2},d,K) \text{ is unbounded}] \le S(\lambda).
$
\item For all $d \ge 1$ and $K \in \cK(d)$, $\lambda_c(\delta_{1/2},d,K) \ge 1$.
\end{itemize}
\end{prop}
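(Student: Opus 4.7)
The plan is to leverage directly the coupling with a Galton--Watson tree built in Section \ref{s:cluster-brw-basic}. Using the notation of Section \ref{s:framework:t:penrose-convex}, one checks that $\P[C^0(\lambda,\delta_{1/2},d,K)\text{ is unbounded}]=\P[\#\widehat C^0=\infty]$: an application of \eqref{e:ahouicetruc} gives one direction, and the converse follows because $\widehat C^0$ finite implies that $C^0$ is contained in a finite union of translates of $\frac12 K$, hence is bounded, while if $C^0$ is unbounded then by local finiteness of $\chi$ it must meet infinitely many grains. So it suffices to bound $\P[\#\widehat C^0=\infty]$.

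The exploration algorithm and pruning construction of Section \ref{s:cluster-brw-basic} produce a coupling in which a subset $B$ of the nodes of the \BRW\ $\overline\tau^{\lambda,d,X_K}$ satisfies
\[
\{V(x),x\in B\}\subset \widehat C^0\text{ and }\#B=\#\widehat C^0.
\]
Since $B$ is a subtree of $\overline\tau^{\lambda,d,X_K}$, its cardinality is at most that of the underlying Galton--Watson tree $\tau^\lambda$ with Poisson$(\lambda)$ progeny. Therefore
\[
\P[\#\widehat C^0=\infty]=\P[\#B=\infty]\le \P[\#\tau^\lambda=\infty]=\survival(\lambda),
\]
which gives the first item.

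For the second item, recall that the Galton--Watson process with Poisson$(\lambda)$ progeny is subcritical when $\lambda<1$, so $\survival(\lambda)=0$ in that regime. The first item then yields $\P[C^0\text{ is unbounded}]=0$ for every $\lambda<1$, and so $\lambda_c(\delta_{1/2},d,K)\ge 1$ by definition of the percolation threshold (cf.\ \eqref{e:lambda_c_composante}).

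Nothing in this argument is delicate: the only point requiring any thought is the justification that the pruning produces a valid coupling with $\widehat C^0$, but this is already carried out in Section \ref{s:cluster-brw-basic}. The equivalence between $C^0$ being unbounded and $\widehat C^0$ being infinite deserves a one-line remark, but it is essentially immediate from local finiteness of the Poisson point process and compactness of $K$.
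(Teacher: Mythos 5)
Your proof is correct and follows essentially the same route as the paper: identify $\P[C^0\text{ unbounded}]$ with $\P[\#\widehat C^0=\infty]$ via \eqref{e:ahouicetruc}, use the pruning coupling of Section \ref{s:cluster-brw-basic} to dominate $\#\widehat C^0$ by the total population of the Galton--Watson tree $\tau^\lambda$, and deduce the threshold bound from $\survival(\lambda)=0$ in the (sub)critical regime. The only (harmless) difference is that the paper observes $\survival(\lambda)=0$ for $\lambda\le 1$, whereas you argue from $\lambda<1$; both are sufficient to conclude $\lambda_c\ge 1$.
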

\begin{proof}
We use the framework of Section \ref{s:framework:t:penrose-convex} 
and the \BRW\ $\overline\tau=\overline\tau^{\lambda,d,X_K}$ of Section \ref{s:cluster-brw}.
By the discussion in Section \ref{s:cluster-brw-basic},
we know that $\#\widehat C^0$ has the same distribution as the total population of a pruned version of $\overline\tau$.
Therefore $\#\widehat C^0$ is stochastically dominated by the total population of $\overline\tau$,
that is by the population of a Galton-Watson tree $\tau^\lambda$.
Using \eqref{e:ahouicetruc} for the first equality, we thus get
\[
\P[C^0(\lambda,\delta_{1/2},d,K) \text{ is unbounded}]  
= \P[\#\widehat C^0(\lambda,\delta_{1/2},d,K) =\infty]
\le \P[\#\tau^\lambda=\infty]
= \survival(\lambda).
\]
This is the first part of Proposition \ref{p:penrose-convexe-upper}.
The second part follows as $S(\lambda)=0$ for $\lambda \le 1$ and as $\lambda_c(\delta_{1/2},d,K)$ is the infimum 
of all $\lambda>0$ such that $\P[C^0(\lambda,\delta_{1/2},d,K) \text{ is unbounded}]$ is positive.
\end{proof}

\subsection{Proof of  the lower bound on percolation probability}
\label{s:t:penrose-convexe-preuve-lower}

We use the framework of Subsection \ref{s:framework:t:penrose-convex}.
Our aim is to prove the following result.

\begin{theorem} \label{t:penrose-convex-lower}
Let $\lambda>0$ and $\epsilon>0$. There exists $d_0 \ge 1$ such that, for all $d \ge d_0$ and all $K \in \cK(d)$,
\[
\P[ \#\widehat C^0(\lambda,\delta_{1/2},d,K) = \infty] \ge \survival(\lambda)- \epsilon.
\]
\end{theorem}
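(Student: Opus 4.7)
\textbf{Setup and truncation.} I would exhibit, inside $\widehat C^0$, the positions of a large pruned subset of the branching random walk $\overline\tau = \overline\tau^{\lambda, d, X_K}$ of Section \ref{s:cluster-brw-basic}, by arguing that in high dimension the pruning discards asymptotically no particle. One may assume $\lambda > 1$, since otherwise $\survival(\lambda) = 0$ and the statement is vacuous. Given $\epsilon > 0$, by standard Galton--Watson facts one can choose integers $k, M, N$ such that $\P[\tau^\lambda_k \geq M,\ \#\tau^\lambda_{\leq k} \leq N] \geq \survival(\lambda) - \epsilon/4$ and $(1 - \survival(\lambda))^M \leq \epsilon/4$.

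\textbf{Interference estimates within $k$ generations.} For any two distinct particles $x,y$ of $\overline\tau_{\leq k}$, setting $\ell = |x|+|y|-2|\mathrm{lca}(x,y)| \geq 2$, the displacement $V(x)-V(y)$ is a sum of $\ell$ independent copies of $\pm X_K$, each with density $\1_K$ (by symmetry of $K$). Iterating Theorem \ref{t:riesz}, I would bound
\[
\P[V(x)-V(y) \in K] \leq \P\bigl[X_B^1+\cdots+X_B^\ell \in B\bigr],
\]
and observe that a straightforward extension of Lemma \ref{l:somme-unif-boule} (obtainable via Theorem \ref{t:Klartag-concentration} applied to $B$) shows that the right-hand side tends to $0$ as $d \to \infty$, uniformly in $K$. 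Union-bounding over the at most $N^2$ such pairs in $\overline\tau_{\leq k}$, on the likely event $\{\#\overline\tau_{\leq k} \leq N\}$, this gives $\P[\text{no interference in }\overline\tau_{\leq k}] \geq 1 - \epsilon/4$ for $d$ large enough depending on $k, N, \epsilon$. On this event, the pruned construction of Section \ref{s:cluster-brw-basic} recovers $\overline\tau_{\leq k}$ intact inside $\widehat C^0$, so combined with the truncation above,
\[
\P\bigl[\widehat C^0 \text{ contains the positions of at least } M \text{ particles of } \overline\tau_k\bigr] \geq \survival(\lambda) - \epsilon/2.
\]

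\textbf{From many particles at generation $k$ to infinite cluster; main obstacle.} The delicate step is to upgrade the ``$M$ particles at generation $k$'' event to $\{\#\widehat C^0 = \infty\}$. The plan would be to restart the pruned-BRW construction from each of those $M$ accepted particles, using the over-pruning of Section \ref{s:cluster-brw-over} to reject any descendant falling in the already-revealed region (a union of at most $N$ translates of $K$). By the same interference estimate applied now to sums involving past positions, each restart should behave asymptotically as an independent copy of the original exploration and survive with probability at least $\survival(\lambda) - \epsilon/(4M)$. The probability that all $M$ restarts fail would then be at most $(1-\survival(\lambda))^M + \epsilon/4 \leq \epsilon/2$, giving $\P[\#\widehat C^0 = \infty] \geq \survival(\lambda) - \epsilon$. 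This is the genuinely delicate part of the argument: the $M$ restarts share the underlying Poisson process and the already-revealed region, so they are not truly independent, and one must simultaneously control interference within each subtree and between distinct subtrees. Making this precise calls for the combination of the log-concavity of sums of uniforms on $K$ with the Klartag estimates (Theorems \ref{t:Klartag-CLT} and \ref{t:Klartag-concentration}), ensuring that the generation-$k$ particles are sufficiently spread out in $\R^d$.
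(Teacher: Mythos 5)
Your first two steps (coupling $\widehat C^0$ to the BRW $\overline\tau^{\lambda,d,X_K}$, and using Theorem~\ref{t:riesz} to bound interference over a fixed number $k$ of generations by $\P[X_B^1+\cdots+X_B^\ell\in B]\to 0$) match the paper and are sound. The genuine gap is in your third step, and the issue is worse than the imprecision you flag: the claim that ``each restart should behave asymptotically as an independent copy of the original exploration and survive with probability at least $\survival(\lambda)-\epsilon/(4M)$'' is exactly the statement of Theorem~\ref{t:penrose-convex-lower} itself, so the argument as sketched is circular. Making it non-circular would require iterating the restart, but then the ``already-revealed region'' grows without bound as you go deeper, and you have no mechanism to control interference caused by an unbounded number of previously-revealed translates of $K$. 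A naive iteration (prove ``$M$ good particles at generation $nk$'' implies ``$M$ good particles at generation $(n+1)k$'' with conditional probability $1-\epsilon$) gives a bound $(1-\epsilon)^n\to 0$, which is useless unless $\epsilon$ can be taken to $0$ with $n$ -- and it cannot, since interference per block does not vanish.

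The paper resolves both issues by replacing ``restart $M$ times'' with a genuine renormalization comparison to supercritical \emph{oriented site percolation} on a $2$-dimensional lattice $\cL$ embedded in $\R^d$ via the Klartag map $L:\R^d\to\R^2$ (Theorem~\ref{t:Klartag-CLT}). Each block $(i,j)$ explores $k$ generations of a BRW started from $m$ seeds in $A_L(i,j)=L^{-1}((i,j)+\frac14 D)$; the $L$-localization forces all particles of block $(i,j)$ into $(i,j)+2k\Lambda D$, so only the $O(k^2\Lambda^2)$ nearby blocks can interfere, and over-pruning rules (Overpopulation, Bad Gap, Interference) bound the number of revealed particles per block by $M$. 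This is what replaces your ``union of at most $N$ translates of $K$'': the number of potentially interfering revealed translates stays bounded \emph{uniformly over all stages}, not just at the first restart. The good-gap set $G(d,K,\eta)$ of Lemma~\ref{l:good_gap} is the concrete device that turns ``the relevant gap is good'' (a Bad Gap check) into ``interference has conditional probability $\le\eta$'' (Lemma~\ref{l:comparaisonsuite}). Finally, each block is open with conditional probability $\ge 1-5\epsilon$ given the past (Lemmas~\ref{l:mesurabilite}--\ref{l:comparaisonsuite}), and oriented site percolation on $\cL$ with parameter $1-5\epsilon$ percolates for small $\epsilon$ by~\eqref{e:percolimite}; this is the structural fact that lets one iterate forever with a fixed $\epsilon>0$, which is precisely what your restart plan lacks. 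So: correct ingredients and correct diagnosis of where the difficulty lies, but the step you flag as delicate is not just delicate, it is missing, and the fix is not a refinement of the restart idea but a qualitatively different (renormalization-to-oriented-percolation) argument.
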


\begin{proof}[Proof of Theorem \ref{t:penrose-convex} using Theorem \ref{t:penrose-convex-lower}]
Let $\lambda>0$ and $\epsilon>0$.
Let $d_0$  be as given by Theorem \ref{t:penrose-convex-lower}.
Let $d \ge d_0$ and $K \in \cK(d)$.
We then have,
\[
\survival(\lambda) 
\ge \P[C^0(\lambda,\delta_{1/2},d,K) \text{ is unbounded}] 
= \P[\#\widehat C^0(\lambda,\delta_{1/2},d,K) =\infty]
\ge \survival(\lambda)- \epsilon.
\]
The first inequality is Proposition \ref{p:penrose-convexe-upper}. 
The equality is \eqref{e:ahouicetruc}.
The second inequality is due to our choice of $d_0$.
We have proved the first part of Theorem  \ref{t:penrose-convex}.

Let $\lambda>1$. 
Then $\survival(\lambda)>0$. 
Set $\epsilon=\survival(\lambda)/2>0$.
By the first part of Theorem \ref{t:penrose-convex}, there exists $d_0$ such that, for all $d \ge d_0$ and $K \in \cK(d)$,
\[
\P[C^0(\lambda,\delta_{1/2},d,K) \text{ is unbounded}] \ge \survival(\lambda)- \epsilon = \survival(\lambda)/2>0.
\]
Therefore, for all $d \ge d_0$ and all $K \in \cK(d)$, $\lambda_c(\delta_{1/2},d,K) \le \lambda$.
Combined with the second part of Proposition \ref{p:penrose-convexe-upper}, this gives the second part of Theorem \ref{t:penrose-convex}.
\end{proof}
	
\subsubsection{Good gaps} 
\label{s:good-gaps}

\paragraph{Result.}
Let $d \ge 1$.
For any $K \in \cK(d)$, denote as usual by $X_K, X'_K$ i.i.d.r.v.\ uniformly distributed on $K$. 
For all $\eta>0$, set
\[
G(d,K,\eta) = \{z \in \R^d : \P(z + X'_K \not\in K) \ge 1-\eta\}
\]
where $G$ stands for "good gap".
Note that $G$ is symmetric because $K$ is symmetric.

\begin{lemma} \label{l:good_gap} 
There exists a sequence $(\epsilon_G(d))_d$ that tends to $0$ such that 
for all $d \ge 1$, all $K \in \cK(d)$, all $a \in \R^d$ and all $\eta>0$ :
\[
\P[a + X_K \not\in G(d,K,\eta)] \le \eta^{-1}\epsilon_G(d).
\]
\end{lemma}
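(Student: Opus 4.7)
The plan is to reduce, via Markov's inequality, to a uniform bound on $\P[a + X_K + X'_K \in K]$, then use Riesz's rearrangement inequality (Theorem \ref{t:riesz}) to push everything to the Euclidean ball $B(d)$, and finally conclude with Lemma \ref{l:somme-unif-boule}.

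The first step is to identify $G(d,K,\eta)$ as a level set of the density $f_Y$ of $Y = X_K + X'_K$. Since $K$ is symmetric with $|K|=1$,
\[
\P[z + X'_K \in K] = |K \cap (K - z)| = (\1_K * \1_K)(z) = f_Y(z),
\]
so $G(d,K,\eta) = \{z : f_Y(z) \le \eta\}$. Markov's inequality then gives
\[
\P[a + X_K \not\in G(d,K,\eta)] = \P[f_Y(a + X_K) > \eta] \le \eta^{-1}\, \E[f_Y(a + X_K)].
\]

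Next, I would rewrite the expectation by a change of variables:
\[
\E[f_Y(a+X_K)] = \int_K f_Y(a+x)\, dx = \int_{\R^d}(\1_K * \1_K)(z)\, \1_{a+K}(z)\, dz.
\]
This integral is precisely of the form handled by Theorem \ref{t:riesz}. Taking $A_1 = A_2 = K$ and $A_3 = a+K$, all three sets have Lebesgue measure $1$, so their symmetric decreasing rearrangements all coincide with $B(d)$. The rearrangement inequality therefore yields
\[
\E[f_Y(a+X_K)] \le \int_{\R^d}(\1_B * \1_B)(z)\, \1_B(z)\, dz = \P[X_B + X'_B \in B(d)],
\]
where $X_B, X'_B$ are i.i.d.\ uniform on $B(d)$.

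Finally, I would invoke Lemma \ref{l:somme-unif-boule}: picking any $\epsilon \in (0, \sqrt{2}-1)$ gives $B(d) \subset (\sqrt{2}-\epsilon)\, B(d)$, whence $\P[X_B + X'_B \in B(d)] \le \P[X_B + X'_B \in (\sqrt{2}-\epsilon)B(d)] \to 0$ as $d \to \infty$. Setting
\[
\epsilon_G(d) := \P[X_{B(d)} + X'_{B(d)} \in B(d)]
\]
then produces the desired sequence. The only conceptually non-trivial step is the invocation of Riesz rearrangement: it kills the dependence on both the translation $a$ and the shape of $K$ simultaneously, reducing the uniform estimate to a single scalar quantity about the Euclidean model whose $d \to \infty$ behaviour is controlled by the fact that the sum of two i.i.d.\ uniforms on a ball typically has norm close to $\sqrt 2$ times the single-sample norm.
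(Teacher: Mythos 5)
Your proof is correct and follows essentially the same route as the paper: identify the relevant quantity, apply Markov's inequality, kill the dependence on $a$ and the shape of $K$ via the Riesz rearrangement inequality (Theorem \ref{t:riesz}), and conclude with Lemma \ref{l:somme-unif-boule}. The only cosmetic difference is that you package the computation as a level-set statement for the density $f_Y = \1_K * \1_K$ of $X_K + X'_K$, whereas the paper writes the same bound in terms of the conditional probability $\P(a+X_K+X'_K \in K \mid X_K)$; the Markov step and the rearrangement step are identical.
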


\begin{proof}
Let $d \ge 1$.
For all $K \in \cK(d)$ and all $a \in \R^d$ we have
\begin{align*}
\P(a+X_K+X'_K \in K)
 & = \int_{\R^d}  \1_K*\1_K(x)\1_{K-a}(x) \d x \\
 & \le \int_{\R^d} \1_B*\1_B(x)\1_B(x) \d x  \text{ by Theorem \ref{t:riesz} (rearrangement inequality)}\\
 & = \P(X_B+X'_B \in B).
\end{align*}
So, for all $\eta>0$,
\begin{align*}
\P[a + X_K \not\in G(d,K,\eta)] 
 & = \P[ \P(a+X_K+X'_K \not\in K | X_K) < 1-\eta] \\
 & = \P[ \P(a+X_K+X'_K \in K | X_K) >  \eta] \\
 & \le \eta^{-1}\E[ \P(a+X_K+X'_K \in K | X_K)] \\
 & = \eta^{-1}\P(a+X_K+X'_K \in K ) \\
 & \le \eta^{-1}\P(X_B+X'_B \in B) \text{ by the above discussion.}
\end{align*}
But $\P(X_B+X'_B \in B) \to 0$ when $d$ tends to $\infty$.
This is (up to a scaling) (21) of Lemma 3 in \cite{Penrose-high-dimensions}.
This is also a consequence of Lemma \ref{l:somme-unif-boule}.
This concludes the proof.
\end{proof}

\subsubsection{Embedding of a two-dimensional lattice in $\R^d$ - oriented percolation}
\label{s:embedding}

\paragraph{Embedding of a two-dimensional lattice in $\R^d$.} 
Set $\cL = \{(i,j) \in \N \times \Z : i+j \text{ odd } |j| < i\}$ and $\overline{\cL}=\cL \cup\{(0,0)\}$.
We see $\overline{\cL}$ as an oriented graph by putting and edge from $(0,0)$ to $(1,0)$ and,
for every $(i,j) \in \cL$, one edge from $(i,j)$ to $(i+1,j+1)$ and one from $(i,j)$ to $(i+1,j-1)$.
We consider on $\overline{\cL}$ the lexicographical order.
Thus, the first vertices of $\overline\cL$ are $(0,0),(1,0),(2,-1),(2,1),(3,-2),\dots$

When $d \ge 1$ and $K \in \cK(d)$ are given, one fixes a linear map $L:\R^d\to\R^2$ given by Theorem \ref{t:Klartag-CLT} for $X_K$.
With each $(i,j) \in \overline\cL$ we associate the sets $A(i,j) \subset \R^2$ and $A_L(i,j) \subset \R^d$ defined by
\[
A(i,j) = (i,j) + 4^{-1} D  \text{ and } A_L(i,j)=L^{-1}(A(i,j))
\]
where $D$ the Euclidean unit ball of $\R^2$ (not to be confused with $B=B(d)$, which is the Eulidean ball of $\R^d$ of volume $1$).
The sets $A_L(i,j)$ are pairwise disjoint.
Moreover $0$ belongs to $A_L(0,0)$.

Using to this embedding, we will compare the cluster of the origin to a supercritical percolation process on $\overline\cL$.

\paragraph{Oriented percolation on $\cL$.}
Let $\theta(u)$ be the probability that 
there exists an infinite open path originating from $(1,0)$ in a Bernoulli site percolation on the graph $\cL$ with parameter $u$.
We will need\footnote{By the first inequality of (1) of Section 10 of \cite{Durrett-oriented} with $N=0$, 
one has (thanks to a contour argument):
$1-\theta(u) \le \sum_{m \ge 4} 3^m(1-u)^{m/4}$.}:
\begin{equation}\label{e:percolimite}
\lim_{u \to 1} \theta(u)=1.
\end{equation}
We refer to \cite{Durrett-oriented} for background on oriented percolation.

\subsubsection{An estimate about \BRW}

The aim of this section is to prove Lemma \ref{l:briqueG} and its consequence Lemma \ref{l:brique}.
Recall the notations from Section \ref{s:brw}.
Recall in particular that $\survival(\lambda)$ is defined as the survival probability of a Galton-Watson process with progeny Poisson$(\lambda)$.

\begin{lemma} \label{l:brique}
Let $\lambda>1$ and $\epsilon>0$.
There exists $m,d_0,k,M \ge 1$ such that, for all $d \ge d_0$ and all centered random variable $X$ in $\R^d$ with log-concave density,
the following properties hold where $L$ is any map given by Theorem \ref{t:Klartag-CLT} for $X$.
\begin{itemize}
\item For all $z \in A_L(0,0)$, 
\[
\P\left[\overline\tau^{\lambda,d,X;\{z\}}_k (A_L(1,0)) \ge m 
\text{ and } \overline\tau^{\lambda,d,X;\{z\}}_{ \le k}(\R^d) \le M\right] \ge \survival(\lambda)-\epsilon.
\]
\item For all $(i,j) \in \cL$ and all subset $\cS \subset A_L(i,j)$ of cardinality $m$,  
\begin{align*}
& \P\Big[\overline\tau^{\lambda,d,X;\cS}_{k} (A_L(i+1,j+1)) \ge m 
\text{ and } \overline\tau^{\lambda,d,X;\cS}_{k} (A_L(i+1,j-1)) \ge m 
 \text{ and } \overline\tau^{\lambda,d,X;\cS}_{ \le k}(\R^d) \le M\Big] \ge 1-\epsilon.
\end{align*}
\end{itemize}
\end{lemma}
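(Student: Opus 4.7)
The plan is to first establish the analogous statement for the two-dimensional \BRW\ with standard Gaussian displacements $\overline\tau^{\lambda,2,\gaussienne}$, and then to transfer it to the log-concave setting via Theorem~\ref{t:Klartag-CLT}. Concretely, I would state and prove the Gaussian version (the announced Lemma~\ref{l:briqueG}, whose statement is obtained from Lemma~\ref{l:brique} by replacing $X$ with $\gaussienne$, $d$ with $2$, and $A_L(i,j)$ with $A(i,j)$) and then deduce Lemma~\ref{l:brique} by coupling. The events in Lemma~\ref{l:brique} are precisely the $L$-pullbacks of the events of Lemma~\ref{l:briqueG}, since $A_L(i,j) = L^{-1}(A(i,j))$ by definition.

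To prove the Gaussian version, note that the underlying genealogy is a supercritical Galton--Watson process with Poisson$(\lambda)$ offspring, so its survival probability equals $\survival(\lambda)$ and on survival $Z_k/\lambda^k \to W$ with $W>0$ a.s.\ by Kesten--Stigum. Conditionally on the tree, positions at generation $k$ form a branching Gaussian walk, and the expected number of particles in the disc $A(1,0)$, starting from a single particle in $A(0,0)$, is asymptotic to $\lambda^k/(32\pi k) \to \infty$. A second-moment estimate based on the many-to-two formula, in which pairs at generation $k$ are split according to the generation of their most recent common ancestor, yields concentration and gives
\[
\P\!\left[\overline\tau_k(A(1,0)) \ge m \;\Big|\; \text{survival}\right] \longrightarrow 1 \quad \text{as } k \to \infty.
\]
Markov's inequality applied to the expected total progeny $1+\lambda+\cdots+\lambda^k$ yields $\overline\tau_{\le k}(\R^2) \le M$ with probability as close to $1$ as desired, provided $M$ is large. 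This gives the first item of the Gaussian version. The second item follows by decomposing the \BRW\ into $m$ independent copies rooted at its starting particles, using translation invariance of $\gaussienne$ to relocate each target $A(i+1, j\pm 1)$ back to $A(1,\pm 1)$, and observing that the probability that none of the $m$ copies deposits at least $m$ descendants in a given target disc is at most $(1-\survival(\lambda)+\epsilon')^m$, which is made $\le \epsilon/4$ by choosing $m$ large.

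For the coupling step, fix the parameters $m, k, M$ delivered by Lemma~\ref{l:briqueG}, let $L:\R^d\to\R^2$ be a map provided by Theorem~\ref{t:Klartag-CLT} for $X$, and build the two branching processes on a common probability space: first sample the Galton--Watson tree (independent of the displacements), then for each of the at most $M$ displacements possibly used in the first $k$ generations on the event $\overline\tau_{\le k}(\R^d)\le M$, realize an optimal coupling between $L(X)$ and $\gaussienne$. A union bound shows that all these displacements coincide on an event of probability at least $1 - M\,\epsilon_{CLT}(d)$; on this event, $L$ transports $\overline\tau^{\lambda,d,X;\cS}_{\le k}$ onto $\overline\tau^{\lambda,2,\gaussienne;L(\cS)}_{\le k}$ and, because $A_L(i,j) = L^{-1}(A(i,j))$, the events of Lemma~\ref{l:brique} and of its Gaussian counterpart coincide. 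Choosing $d_0$ so that $M\,\epsilon_{CLT}(d)\le \epsilon/2$ for $d\ge d_0$ then transfers the Gaussian bounds to the statement of Lemma~\ref{l:brique} with an additional loss of at most $\epsilon/2$.

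The main difficulty lies in the quantitative lower bound for $\overline\tau_k(A(1,0))$ on survival in the Gaussian step: the target disc has fixed size while the generation $k$ population spreads Gaussianly over a region of diameter $\sqrt k$, so only a fraction $\Theta(1/k)$ of the particles lands in it. Since the population grows like $\lambda^k$ this fraction is ultimately much larger than $m$, but extracting it quantitatively requires controlling the correlations between particles sharing ancestors, which is the technical heart of the argument.
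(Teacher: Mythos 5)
Your overall skeleton is the same as the paper's: first prove a purely two-dimensional Gaussian version (the paper's Lemma~\ref{l:briqueG}), deduce Lemma~\ref{l:brique} by coupling $L(X)$ to $\gaussienne$ using Theorem~\ref{t:Klartag-CLT} together with the bound $\overline\tau_{\le k}(\R^d)\le M$, and obtain the second item from the first by splitting the \BRW\ into $m$ independent copies. Those parts are correct and track the paper closely.

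However, the Gaussian lower bound --- which you yourself identify as the technical heart --- has a genuine gap as written. You claim that a second-moment/many-to-two estimate ``yields concentration'' and hence $\P[\overline\tau^{\lambda,2,\gaussienne}_k(A(1,0))\ge m\mid \mathrm{survival}]\to 1$. This does not follow. Writing $N_k=\overline\tau^{\lambda,2,\gaussienne}_k(A(1,0))$ and computing the many-to-two second moment by splitting on the generation $j$ of the most recent common ancestor, one finds that the contribution at $j$ is comparable to $\lambda^{2k-j}/((k-j)(k+j))$ while $\E[N_k]^2\sim(\lambda^k/k)^2$, so that the ratio $\E[N_k^2]/\E[N_k]^2$ converges to a constant strictly larger than $1$ (roughly $1+\lambda/(\lambda-1)$). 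The dominant contribution comes from $j=O(1)$, i.e.\ from the bulk of the tree sharing a common ancestor near the root, which is precisely the $W$-fluctuation from Kesten--Stigum; it never disappears. Consequently $\mathrm{Var}(N_k)/\E[N_k]^2$ does not tend to $0$, Chebyshev fails, and Paley--Zygmund gives only a uniform positive lower bound on $\P[N_k\ge m]$, not convergence of the conditional probability to $1$.

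The paper circumvents this by Lemma~\ref{l:prebriqueG}, which uses a two-phase ``grow then scatter'' argument rather than a single second-moment bound. One first passes to a truncated \BRW\ with step bound $\Lambda_1$ (so that generation-$n$ particles all lie in $n\Lambda_1 D$), invokes Kesten--Stigum to extract, with probability at least $S(\lambda)-3\epsilon$, at least $\eta\lambda_1^n$ particles at generation $n$, and then exploits the \emph{conditional independence} of the subtrees spawned by these $\eta\lambda_1^n$ particles. Each subtree independently hits the target disc after a further $n^2$ generations with probability at least $S(\lambda)C/n^2$, and the total number of successes stochastically dominates a $\mathrm{Binomial}(\lfloor\eta\lambda_1^n\rfloor, S(\lambda)C/n^2)$, which concentrates because its mean $\sim\eta\lambda_1^n S(\lambda)C/n^2\to\infty$. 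It is this intermediate conditioning on $\cF_n$, not the unconditional second moment, that produces the required concentration. To repair your argument you would need to introduce such a conditioning step (or an equivalent martingale/local-limit result for \BRW); without it the Gaussian lemma is not established.

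A minor side remark: the constant in your first-moment asymptotic should be $\lambda^k/(32k)$, not $\lambda^k/(32\pi k)$, since $|A(1,0)|=\pi/16$ and the Gaussian density at distance~$1$ is $\approx(2\pi k)^{-1}$; this has no bearing on the argument.
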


Lemma \ref{l:brique} is a consequence of the following result (the proof is given below).
In Section \ref{s:preuve-penrose-convexe} we will prove and use another consequence of Lemma \ref{l:briqueG}.
Recall that $\gaussienne$ denotes a standard Gaussian random vector in $\R^2$.

\begin{lemma} \label{l:briqueG}
Let $\lambda>1$ and $\epsilon>0$.
There exists $m,k,M \ge 1$ such that the following properties hold.
\begin{itemize}
\item For all $z \in A(0,0)$, 
\[
\P\left[\overline\tau^{\lambda,2,\gaussienne;\{z\}}_k (A(1,0)) \ge m 
\text{ and } \overline\tau^{\lambda,2,\gaussienne;\{z\}}_{\le k}(\R^2) \le M \right] \ge \survival(\lambda)-\epsilon.
\]
\item For all $(i,j) \in \cL$ and all subset $\cS \subset A(i,j)$ of cardinality $m$,  
\begin{align*}
\P\Big[
 & \overline\tau^{\lambda,2,\gaussienne;\cS}_{k} (A(i+1,j+1)) \ge m \text{ and }  
 \overline\tau^{\lambda,2,\gaussienne;\cS}_{k} (A(i+1,j-1)) \ge m  
  \text{ and }  
 \text{ and } \overline\tau^{\lambda,2,\gaussienne;\cS}_{\le k}(\R^2) \le M \Big] \ge 1-\epsilon.
\end{align*}
\end{itemize}
\end{lemma}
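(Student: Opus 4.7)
The random forest $\overline\tau^{\lambda,2,\gaussienne;\cS}$ factors as a Galton--Watson tree with Poisson$(\lambda)$ offspring, endowed with i.i.d.\ standard Gaussian displacements on each edge, independently of the tree; marginally, a generation-$k$ particle started at $s$ has position $s+\sqrt{k}\,\gaussienne'$ in distribution, where $\gaussienne'$ is a fresh standard Gaussian in $\R^2$. The plan is to choose parameters in the order $k_0,\,N_0,\,R,\,k=k_0+k_1,\,m,\,M$. The $M$-clause is a direct Markov estimate: for $|\cS|\leq m$, $\E\bigl[\overline\tau^\cS_{\leq k}(\R^2)\bigr]\leq m(\lambda^{k+1}-1)/(\lambda-1)$, so once $k,m,\lambda$ are fixed I can pick $M$ so that this clause fails with probability at most $\epsilon/3$ in either item.

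\paragraph{Lower bound in item 2.}
By translation invariance I reduce to $\cS\subset A(0,0)=D/4$ with targets $A(1,\pm1)=(1,\pm1)+D/4$, at distance $O(1)$ from the starting configuration. The $m$ sub-BRWs rooted at the points of $\cS$ are i.i.d.\ copies of a single-particle BRW. For any target $A\in\{A(1,1),A(1,-1)\}$ and any $s\in D/4$, a direct Gaussian density evaluation gives $\E\bigl[\overline\tau^{\lambda,2,\gaussienne;\{s\}}_k(A)\bigr]=\lambda^k\P\bigl[s+\sqrt{k}\,\gaussienne'\in A\bigr]\geq c\lambda^k/k$ uniformly in $s$. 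For the variance I decompose the second moment over the generation $j$ of the most recent common ancestor of pairs of generation-$k$ particles: the expected number of such pairs is $\lambda^{2k-j}$, and each contributes at most $p_k\cdot\sup_y\P[y+\sqrt{k-j}\,\gaussienne'\in A]\lesssim 1/(k(k-j))$; the resulting geometric sum gives $\mathrm{Var}\bigl(\overline\tau^{\lambda,2,\gaussienne;\{s\}}_k(A)\bigr)\leq C_\lambda(\lambda^k/k)^2$. Summing the $m$ independent sub-BRW counts and applying Chebyshev, the probability that the total in $A$ is below $m$ is $O(1/m)$, which drops below $\epsilon/6$ for $m$ large once $k$ has been fixed so that $c\lambda^k/k\gg 1$; handling both targets gives an $\epsilon/3$ bound.

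\paragraph{Lower bound in item 1.}
Here the target probability $\survival(\lambda)-\epsilon$ reflects possible extinction, and the single-BRW variance-to-mean$^2$ ratio is only bounded (not vanishing), so the item 2 argument cannot be applied directly. I bypass this by running the same averaging after an intermediate generation $k_0$. Kesten--Stigum ($Z_k/\lambda^k\to W$ almost surely, $\P[W>0]=\survival(\lambda)$) yields $\P[Z_{k_0}\geq N_0]\geq\survival(\lambda)-\epsilon/6$ for any prescribed $N_0$ once $k_0$ is large. A union bound over generation-$k_0$ particles, using the Gaussian tail $\P[\|\gaussienne\|>R/2]=e^{-R^2/8}$, yields $\P[\exists u\in\tau_{k_0}:\|V(u)\|>R\sqrt{k_0}]\leq\lambda^{k_0}e^{-R^2/8}\leq\epsilon/6$ once $R^2\gtrsim k_0\log\lambda$. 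On the intersection of these two good events I restart the item 2 argument with $Z_{k_0}\geq N_0$ conditionally independent sub-BRWs, each starting from a position $y$ with $\|y\|\leq R\sqrt{k_0}$: picking $k_1\gtrsim R^2k_0/\ln 2$ keeps $e^{-\|y-(1,0)\|^2/(2k_1)}\geq 1/2$, so each sub-BRW contributes expected count at least $c\lambda^{k_1}/k_1$ to $A(1,0)$ with variance at most $C_\lambda(\lambda^{k_1}/k_1)^2$ (the MRCA second-moment bound is uniform in the starting position, under the above constraint). Chebyshev over $N_0$ conditionally independent sub-BRWs then keeps the conditional failure probability below $\epsilon/6$ for $N_0$ large. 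Setting $k=k_0+k_1$ and combining the three error terms gives $\P\bigl[\overline\tau^{\lambda,2,\gaussienne;\{z\}}_k(A(1,0))\geq m\bigr]\geq\survival(\lambda)-\epsilon/2$, which together with the $M$-clause yields item 1.

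\paragraph{Main obstacle.}
The conceptual inputs are classical (Kesten--Stigum, Markov/Chebyshev, and a pair-expectation second-moment estimate for BRW counts). The real difficulty is the parameter cascade in item 1: $k_0$ must first be large for Kesten--Stigum; then $R$ of order $\sqrt{k_0}$ to beat $\lambda^{k_0}$ in the spatial union bound; then $k_1$ must be quadratic in $k_0$ to absorb positions of size up to $R\sqrt{k_0}\sim k_0$ into the Gaussian relaxation at generation $k_1$; then $N_0$ large enough for the final Chebyshev concentration over sub-BRWs; and only after all this are $m$ and $M$ fixed. The main technical bookkeeping is ensuring the MRCA-based variance bound is uniform in the starting position of the sub-BRW, which is what allows the item 2 averaging argument to be re-run at the intermediate generation.
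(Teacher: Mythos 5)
Your argument is correct, and it reaches the lemma by a genuinely different route than the paper.

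The paper first proves an auxiliary single-seed lemma (Lemma~\ref{l:prebriqueG}): for any prescribed $m$, there is a $k$ such that a single-root BRW puts at least $m$ particles into any translate $x+\beta D$ with probability at least $\survival(\lambda)-\epsilon$. Its proof uses a \emph{truncated-step} BRW with mean $\lambda_1<\lambda$ so that (i) Kesten--Stigum gives $\ge \eta\lambda_1^n$ generation-$n$ particles inside $n\troncature_1 D$ with probability $\ge\survival(\lambda)-3\epsilon$, and (ii) conditionally on survival, a \emph{single} designated descendant at generation $n^2$ lands in the target with probability $\ge C\survival(\lambda)/n^2$; the count of successful seeds is then a Binomial$(\lfloor\eta\lambda_1^n\rfloor,\;C\survival(\lambda)/n^2)$ whose mean diverges, and Chebyshev finishes. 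With Lemma~\ref{l:prebriqueG} in hand, item~1 of \ref{l:briqueG} is immediate, and item~2 is the trivial observation that, for $m$ chosen so that $(1-\survival(\lambda)+\epsilon)^m\le\epsilon$, at least one of the $m$ i.i.d.\ sub-BRWs succeeds with probability $\ge 1-\epsilon$, and that single success already delivers $\ge m$ points into the target. You instead prove item~2 directly by a second-moment (MRCA pair-expectation) estimate giving $\mathrm{Var}/\E^2=O(1)$ uniformly in the start, then Chebyshev over the $m$ independent sub-BRW counts; and you reduce item~1 to that same averaging mechanism by inserting an intermediate generation $k_0$ via Kesten--Stigum together with a spatial union bound. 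The two proofs trade difficulties: the paper avoids any variance computation at the cost of the truncation device and a more delicate ``one descendant lands'' lower bound, while your route needs the MRCA second-moment bookkeeping but treats the spatial localization with a simpler union bound and makes item~2 self-contained.

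Two small points to tidy up. The 2D Gaussian tail is $\P[\|\gaussienne\|>R]=e^{-R^2/2}$, not $e^{-R^2/8}$; this is harmless since only the existence of a large enough $R$ is used, but the written order of quantifiers should then be $N_0,k_0,R,k_1,m,M$ (you must fix $N_0$ before $k_0$ so that ``Kesten--Stigum for any prescribed $N_0$'' applies, and $R$ must be allowed to grow like $\sqrt{k_0\log\lambda}$ before $k_1$ is chosen). Second, in the item~1 Chebyshev step you should condition on $\cF_{k_0}$ and note that the good event $\{Z_{k_0}\ge N_0\}\cap\{\forall u\in\tau_{k_0}:\|V(u)\|\le R\sqrt{k_0}\}$ is $\cF_{k_0}$-measurable, so the conditional bound $\le 4C/(N_0c^2)$ holds deterministically on that event; as written the phrase ``conditionally independent sub-BRWs'' glosses over this but the fix is routine. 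Finally, your $m$ must be large for item~2 yet no larger than roughly $N_0\,c\,\lambda^{k_1}/(2k_1)$ for item~1; these are compatible once $k_1$ is large, and the constraint deserves an explicit sentence.
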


Let us start by the following lemmas. 

\begin{lemma} \label{l:continuity-psi} The map $\survival$ is continuous on $[0,+\infty)$. 
\end{lemma}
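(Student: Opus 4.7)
The plan is to use the standard representation $\survival(\lambda) = 1 - q(\lambda)$, where $q(\lambda)$ denotes the extinction probability, i.e.\ the smallest fixed point in $[0,1]$ of the Poisson generating function $\phi(s,\lambda) := e^{\lambda(s-1)}$. One recalls the classical dichotomy: $q(\lambda) = 1$ for $\lambda \le 1$ (so $\survival \equiv 0$ there), while $q(\lambda) \in [0,1)$ for $\lambda > 1$. With this in hand, continuity on $[0,1]$ is immediate.

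On $(1,+\infty)$, I would apply the implicit function theorem to
\[
F(s,\lambda) := s - e^{\lambda(s-1)}
\]
at the point $(q(\lambda),\lambda)$. The partial derivative $\partial_s F(q(\lambda),\lambda) = 1 - \lambda q(\lambda)$ is strictly positive, because at an attracting Galton-Watson fixed point one has $\phi'(q(\lambda),\lambda) = \lambda q(\lambda) < 1$. This yields smooth (hence continuous) dependence of $q$ on $\lambda > 1$.

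The only delicate point is continuity at $\lambda = 1$, which amounts to showing $q(\lambda) \to 1$ as $\lambda \downarrow 1$. I would argue by contradiction: suppose $q(\lambda_n) \to L$ with $L < 1$ along some sequence $\lambda_n \downarrow 1$. Passing to the limit in $q(\lambda_n) = e^{\lambda_n(q(\lambda_n) - 1)}$ gives $L = e^{L-1}$. But the auxiliary map $s \mapsto e^{s-1} - s$ is strictly convex on $\R$ with minimum value $0$ attained uniquely at $s = 1$, so no $L < 1$ can solve this equation, which is the desired contradiction. This provides the continuity of $\survival$ at $\lambda = 1$ and concludes the argument.
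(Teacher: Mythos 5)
Your proof is correct, but it takes a different route from the paper. You split into three pieces: the trivial region $[0,1]$ where $S\equiv 0$, the interior $(1,\infty)$ handled by the implicit function theorem applied to $F(s,\lambda)=s-e^{\lambda(s-1)}$ (using the standard fact $\phi'(q)=\lambda q<1$ for the attracting fixed point to get $\partial_s F\neq 0$), and a separate compactness/contradiction argument at $\lambda=1$ resting on the strict convexity of $s\mapsto e^{s-1}-s$. The paper instead solves the fixed-point equation $u=e^{\lambda(u-1)}$ explicitly for $\lambda$, getting $\lambda=\ln(u)/(u-1)$, and observes that $u\mapsto\ln(u)/(u-1)$ is a decreasing homeomorphism from $(0,1)$ onto $(1,\infty)$; thus $q=1-S$ on $(1,\infty)$ is the continuous inverse of this homeomorphism, and the boundary behavior $q(\lambda)\to 1$ as $\lambda\downarrow 1$ falls out for free from $f(1^-)=1$, so no separate argument at $\lambda=1$ is needed. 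In short, the paper's approach is an explicit inverse-function argument tailored to the Poisson case and is a bit shorter; yours is an implicit-function argument that would transfer verbatim to other offspring distributions with strictly convex generating functions, at the modest cost of handling $\lambda=1$ separately.
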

\begin{proof} For any $\lambda>1$, $1-\survival(\lambda)$ is the only real $u \in (0,1)$ such that $u = \exp(\lambda(u-1))$ which we write
\[
\frac{\ln(u)}{u-1} = \lambda.
\]
But $u \mapsto \ln(u)/(u-1)$ defines a decreasing homeomorphism $f$ from $(0,1)$ to $(1,+\infty)$\footnote{
This is for example a consequence of the following facts:
$f(0+)=+\infty$; $f(1-)=1$; for all $v \in (0,1)$, $f(1-v) = 1+v/2+v^2/3+v^3/4+\dots$}.
As moreover $\survival$ vanishes on $[0,1]$, the result follows.
\end{proof}

\begin{lemma} \label{l:prebriqueG}
Let $\lambda>1$ and $\alpha,\beta,\epsilon>0$. 
Let $m \ge 1$.
There exists $k \ge 1$ such that, for all $x \in \alpha D$,
\[
\P\left[\overline\tau^{\lambda,2,\gaussienne}_k ( x + \beta D) \ge m\right] \ge \survival(\lambda)-\epsilon.
\]
\end{lemma}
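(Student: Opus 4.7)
The plan is to reduce the uniform statement to a finite family of pointwise lower bounds via a covering argument, and then to establish each such pointwise lower bound by a two-stage branching analysis.

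For the reduction, I would cover the compact set $\alpha D$ by finitely many points $x_1, \ldots, x_J$ chosen so that every $x \in \alpha D$ lies within Euclidean distance $\beta/2$ of some $x_j$. Then $x_j + (\beta/2) D \subset x + \beta D$, hence $\overline\tau^{\lambda,2,\gaussienne}_k(x + \beta D) \ge \overline\tau^{\lambda,2,\gaussienne}_k(x_j + (\beta/2) D)$, so it suffices to show the pointwise statement: for each $c \in \R^2$ and each $\gamma>0$,
\[
\liminf_{k \to \infty} \P\!\left[\overline\tau^{\lambda,2,\gaussienne}_k(c + \gamma D) \ge m\right] \ge \survival(\lambda).
\]
Applying this with $(c,\gamma)=(x_j,\beta/2)$ and taking $k$ larger than the resulting thresholds finishes the proof.

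For the pointwise lower bound I would split $k = k_1 + k_2$ and argue in two stages. In the first stage, since on the survival event $Z_n \to \infty$ almost surely, one has $\P[Z_{k_1} \ge N] \to \survival(\lambda)$ as $k_1 \to \infty$; combined with elementary Gaussian tail bounds on the positions, one can choose $k_1$, then $R$, then $N$ large enough so that the event
\[
G = \{Z_{k_1} \ge N\} \cap \{\text{every gen-}k_1\text{ particle lies in } R\cdot D\}
\]
has probability at least $\survival(\lambda) - \epsilon/2$. In the second stage, the $Z_{k_1}$ sub-BRWs rooted at the generation-$k_1$ particles are conditionally independent (given their positions), and for one started at $y \in R\cdot D$, the many-to-one formula yields
\[
\E\!\left[\overline\tau^{\lambda,2,\gaussienne;\{y\}}_{k_2}(c + \gamma D)\right] = \lambda^{k_2}\,\P\!\left[y + \sqrt{k_2}\,\gaussienne \in c + \gamma D\right] \ge c_1 \lambda^{k_2}/k_2,
\]
with $c_1>0$ depending only on $R$, $|c|$, $\gamma$. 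A many-to-two computation, decomposing ordered pairs of generation-$k_2$ descendants by the generation of their most recent common ancestor and integrating Gaussian transition kernels (the dominant contribution coming from ancestors near the root), gives the matching upper bound of order $\lambda^{2k_2}/k_2^2$ on the second moment. Paley--Zygmund then provides a uniform lower bound $p>0$ on the probability that each sub-BRW contributes at least one particle to $c+\gamma D$, valid for all $y \in R\cdot D$ and $k_2$ large. On $G$ the total count thus dominates a Binomial$(N, p)$ random variable, and choosing $N \ge 2m/p$ gives, via Hoeffding, a conditional probability at least $1 - \epsilon/2$ that the count exceeds $m$; combining with the probability of $G$ yields the bound $(\survival(\lambda) - \epsilon/2)(1 - \epsilon/2) \ge \survival(\lambda) - \epsilon$.

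The main obstacle is the many-to-two second moment estimate, which quantifies that at generation $k_2$ the BRW spreads at scale $\sqrt{k_2}$ and so a bounded region of $\R^2$ contains of order $\lambda^{k_2}/k_2$ particles. The remaining ingredients are classical Galton--Watson asymptotics and elementary Gaussian tail estimates.
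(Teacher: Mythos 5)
Your proposal is correct, but it takes a genuinely different route from the paper at the key step. Both arguments share the same skeleton: first run the Galton--Watson process for $k_1$ generations to obtain, with probability close to $\survival(\lambda)$, at least $N$ particles lying in a bounded region (the paper gets this by truncating displacements via $\troncature_1$ and using the Seneta--Heyde/Kesten--Stigum normalization, you use Gaussian tails directly --- both fine). The divergence is in how one shows that a single sub-BRW rooted at a controlled location $y$ hits the target disk after $k_2$ more generations. The paper's argument is elementary: on the survival event, select a single surviving ray of length $k_2$; conditionally on the underlying tree, the displacement along that ray is $\sum_{i=1}^{k_2}\gaussienne_i\sim\sqrt{k_2}\,\gaussienne$, which gives a hitting probability $\ge \survival(\lambda)\,C/k_2$. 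This is small, but there are exponentially many seeds ($\lfloor\eta\lambda_1^{k_1}\rfloor$), so the resulting binomial mean diverges and Chebyshev finishes. Your argument instead does a many-to-one / many-to-two second-moment computation and applies Paley--Zygmund, obtaining a hitting probability bounded below by a constant $p>0$ uniformly over $y$ in a bounded region and $k_2$ large. This is a strictly stronger conclusion at the per-seed level (constant rather than $1/k_2$), but it costs a dimension-$2$ second-moment estimate that the paper deliberately sidesteps. Your estimate is correct: the off-diagonal sum over the generation $j$ of the most recent common ancestor is dominated by the $j=O(1)$ terms because of the $\lambda^{-j}$ weight, and the resulting $\E[Z^2]\lesssim \lambda^{2k_2}/k_2^2$ matches $(\E Z)^2$, so Paley--Zygmund applies. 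Two minor points worth noting if you were to write this out in full: (i) your initial covering of $\alpha D$ by finitely many $x_j$ is not actually needed, since the constants in either approach can be made uniform over the bounded set $\alpha D$ directly; (ii) when you invoke Hoeffding on the Binomial$(N,p)$, you should first pick $p$, then $N$ so that $Np$ is large enough --- the order of quantifiers matters, but it is easily arranged since $p$ does not depend on $N$.
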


\begin{proof} 
Let us first define a few constants.
Let $\lambda_1 \in ]1,\lambda[$ be such that
\[
\survival(\lambda_1) \ge \survival(\lambda)-\epsilon.
\]
Such a $\lambda_1$ exists by continuity of $\survival$, see Lemma \ref{l:continuity-psi}.
Let $\troncature_1>0$ be such that
\[
\lambda_1 := \lambda\P[\|\gaussienne\|_2 \le \troncature_1].
\]
Let $C>0$ be such that, for all $n \ge 1$ and all $z \in (\alpha + \troncature_1 n) D$,
\begin{equation}\label{e:zarb}
\P\left[\gaussienne  \in \frac{1}{n}z+\frac{\beta}{n} D\right] \ge \frac{C}{n^2}.
\end{equation}

Let us show the existence of $\eta>0$ and $n_0 \ge 1$ such that, 
\begin{equation}\label{e:truc1}
\forall n \ge n_0, \P\left[\overline\tau^{\lambda,2,\gaussienne}_n(n\troncature_1 D) \ge \eta \lambda_1^n\right] \ge \survival(\lambda) - 3\epsilon.
\end{equation}
Denote by $\overline\tau^{\lambda,2,\gaussienne,\troncature_1}$ the \BRW\ obtained from $\overline\tau^{\lambda,2,\gaussienne}$ by pruning one particle and its progeny as soon as it makes a step
whose Euclidean norm is larger than $\troncature_1$.
The new \BRW\ has then the same distribution as $\overline\tau^{\lambda_1,2,\gaussienne_1}$ where $\gaussienne_1$ has the distribution of $\gaussienne$ condition to
$\|\gaussienne\|_2 \le \troncature_1$.
We have (see for example \cite{Athreya-ney-book} page 9)
\[
\frac{\overline\tau^{\lambda_1,2,\gaussienne_1}_n(\R^2)}{\lambda_1^n} \to W \text{ a.s.}
\]
where $W$ is a random variable which is positive on the event $\{\overline\tau^{\lambda_1,2,\gaussienne_1} \text{ survives}\}$
whose probability is $S(\lambda_1)$.
We can then chose $\eta>0$ such that $\P[W \ge 2\eta] \ge S(\lambda_1)-\epsilon$.
We can now fix $n_0 \ge 1$ such that, for all $n \ge n_0$,
\[
\P\left[\overline\tau^{\lambda_1,2,\gaussienne_1}_n(\R^2) \ge \eta \lambda_1^n\right] \ge \survival(\lambda_1)-2\epsilon \ge \survival(\lambda) - 3\epsilon
\]
and then
\[
\P\left[\overline\tau^{\lambda_1,2,\gaussienne_1}_n(n\troncature_1 D) \ge \eta \lambda_1^n\right] \ge \survival(\lambda) - 3\epsilon.
\]
One deduces \eqref{e:truc1}.

Now, let us show
\begin{equation}\label{e:truc2}
\forall n \ge 1, \forall x \in \alpha D, \forall y \in n\troncature_1 D,
\P\left[\overline\tau^{\lambda,2,\gaussienne;\{y\}}_{n^2}(x+\beta D) \ge 1\right] \ge \frac{\survival(\lambda)C}{n^2}.
\end{equation}
Let $n \ge 1, x \in \alpha D$ and $y \in n\troncature_1 D$.
We have
\[
\P\left[\overline\tau^{\lambda,2,\gaussienne;\{y\}}_{n^2}(x+\beta D) \ge 1\right] \ge \survival(\lambda)\P\left[y+\sum_{i=1}^{n^2} \gaussienne_i  \in x+\beta D\right]
\]
where the $\gaussienne_i$ are independent copies of $\gaussienne$.
To prove this, it is sufficient to consider, on the event $\{\overline\tau^{\lambda,2,\gaussienne;\{y\}} \text{ survives}\}$,
the position of a given particle of generation $n^2$.
As $\sum_{i=1}^{n^2} \gaussienne_i$ has the same distribution as $n \gaussienne$, we deduce
\[
\P\left[\overline\tau^{\lambda,2,\gaussienne;\{y\}}_{n^2}(x+\beta D) \ge 1\right] \ge \survival(\lambda)\P\left[\gaussienne  \in \frac{1}{n}(x-y)+\frac{\beta}{n} D\right].
\]
Thanks to \eqref{e:zarb} we deduce \eqref{e:truc2}.

We now combine \eqref{e:truc1} and \eqref{e:truc2} and get, for all $x\in \alpha D$ and all $n \ge n_0$,
\[
\P\left[\overline\tau^{\lambda,2,\gaussienne}_{n+n^2}(x+\beta D) \ge m\right] 
\ge 
(\survival(\lambda)-3\epsilon)\P\left[\text{binomial}\left(\lfloor \eta \lambda_1^n\rfloor,\frac{\survival(\lambda)C}{n^2}\right) \ge m\right].
\]
This can be proven by first conditioning with respect to the $n$ first generations of the \BRW,
working on the event $\{\overline\tau^{\lambda,2,\gaussienne}_n(n\troncature_1 D) \ge \eta \lambda_1^n\}$
(whose probability is controlled by \eqref{e:truc1}) and
using \eqref{e:truc2} with $\lfloor \eta \lambda_1^n\rfloor$ independent \BRW\ originating from different positions of $\overline\tau^{\lambda,2,\gaussienne}_n$
in $n \Lambda_1 D$.
But for $n$ large enough we have
\[
\lfloor \eta \lambda_1^n\rfloor\frac{\survival(\lambda)C}{n^2} - m \ge  \frac 1 2 \eta\lambda_1^n\frac{\survival(\lambda)C}{n^2}
\]
and then, by Chebyshev's inequality,
\[
\P\left[\text{binomial}\left(\lfloor \eta \lambda_1^n\rfloor,\frac{\survival(\lambda)C}{n^2}\right) \le m\right]
\le 
\eta \lambda_1^n\left(\frac 1 2 \eta\lambda_1^n\frac{\survival(\lambda)C}{n^2}\right)^{-2} \to 0 \text{ as } n \to \infty.
\]
As a consequence, there exists $n \ge n_0$ such that, for all $x\in \alpha D$,
\[
\P\left[\overline\tau^{\lambda,2,\gaussienne}_{n+n^2}(x+\beta D) \ge m\right] 
\ge 
\survival(\lambda)-4\epsilon.
\]
We fix such a $n$.
The lemma is proven with $k=n+n^2$.
\end{proof}

\begin{proof}[Proof of Lemma \ref{l:briqueG}]
Fix $m$ such that
\[
\big(1-\survival(\lambda)+\epsilon\big)^m \le \epsilon. 
\]
This is possible if $\epsilon>0$ is small enough, which we can assume.
We apply Lemma \ref{l:prebriqueG} with $\alpha=3$ and $\beta=1/4$.
We get $k$ such that, for all $x \in 3D$,
\[
\P\left[\overline\tau^{\lambda,2,\gaussienne}_{k} ( x + 4^{-1} D) \ge m \right] \ge \survival(\lambda)-\epsilon.
\]
By natural couplings between the involved \BRW\ we get, for all $z \in \R^2$;
\begin{align*}
 \overline\tau^{\lambda,2, \gaussienne ; \{z\}}_{  k} ( A(1,0) )
  & = \overline\tau^{\lambda,2,\gaussienne ; \{z\}}_{  k} ( (1,0)+4^{-1} D ) \\
  & = \overline\tau^{\lambda,2,\gaussienne}_{ k} ( (1,0)-z+ 4^{-1} D).
\end{align*}
But if $z \in A(0,0)$, then $(1,0)-z \in 3D$. 
Therefore, for all $z \in A(0,0)$,
\begin{equation}\label{e:plage1}
\P\left[\overline\tau^{\lambda,2,\gaussienne ; \{z\}}_{  k} ( A(1,0) ) \ge m \right] \ge \survival(\lambda)-\epsilon.
\end{equation}
Let $(i,j) \in \cL$. 
Let $\cS \subset A(i,j)$ such that $\#\cS=m$.
Using the independence between the \BRW\ originating from the different points of $\cS$, an argument similar to the above one and the definition of $m$,
we get
\[
\P\left[\overline\tau^{\lambda,2,\gaussienne;\cS}_{ k} (A(i+1,j+1)) \ge m \right] \ge 1-\big(1-\survival(\lambda)+\epsilon\big)^m \ge 1-\epsilon.
\]
With the same argument, we get
\[
\P\left[\overline\tau^{\lambda,2,\gaussienne;\cS}_{ k} (A(i+1,j-1)) \ge m \right] \ge 1-\big(1-\survival(\lambda)+\epsilon\big)^m \ge 1-\epsilon.
\]
Therefore
\begin{equation}\label{e:plage2}
\P\left[\overline\tau^{\lambda,2,\gaussienne;\cS}_{ k} (A(i+1,j+1)) \ge m 
\text{ and } \overline\tau^{\lambda,2,\gaussienne;\cS}_{ k} (A(i+1,j-1)) \ge m \right] \ge 1-2\epsilon.
\end{equation}
Let $M$ be large enough to ensure $\P\left[\overline\tau^{\lambda,2,\gaussienne}_{\le k}(\R^2) \ge M/m\right] \le \epsilon/m$.
By a natural coupling, we get, for all $\cS \subset \R^2$ of cardinality at most $m$,
\begin{equation}\label{e:plage3}
\P\left[\overline\tau^{\lambda,2,\gaussienne,\cS}_{\le k}(\R^2) \ge M\right]  \le m\epsilon/m = \epsilon.
\end{equation}
The lemma follows from \eqref{e:plage1}, \eqref{e:plage2} and \eqref{e:plage3}.
\end{proof}

\begin{proof}[Proof of Lemma \ref{l:brique}]
Let $m,k,M$ be given by Lemma \ref{l:briqueG}.
Let $d_0$ be such that $\epsilon_{CLT}(d) \le \epsilon/M$ for all $d \ge d_0$ where $\epsilon_{CLT}$ appears in Thereom \ref{t:Klartag-CLT}.
Let $d \ge d_0$, $X$ be a centered random variable in $\R^d$ with log-concave density and $L$ be any map given by Theorem \ref{t:Klartag-CLT}.
Let $\cS$ be a finite subset of $\R^2$.
With an appropriate coupling,
\[
\P\left[
\left\{\overline\tau^{\lambda,2,\gaussienne; \cS}_{\le k}(\R^2) \le M \right\}
\setminus
\left\{\overline\tau^{\lambda,2,L(X); \cS}_{\le k} = \overline\tau^{\lambda,2,\gaussienne; \cS}_{\le k}\right\}
\right] 
\le M\P[L(X) \neq \gaussienne] \le M\epsilon_{CLT}(d) \le \epsilon.
\]
Therefore, for any $z \in A_L(0,0)$,
\begin{align*}
& \P\left[\overline\tau^{\lambda,d,X;\{z\}}_k (A_L(1,0)) \ge m 
\text{ and } \overline\tau^{\lambda,d,X;\{z\}}_{\le k}(\R^d) \le M \right] \\
& = \P\left[\overline\tau^{\lambda,2,L(X);\{L(z)\}}_k (A(1,0)) \ge m 
\text{ and } \overline\tau^{\lambda,2,L(X);\{L(z)\}}_{\le k}(\R^2) \le M \right] \\
& \ge \P\left[\overline\tau^{\lambda,2,\gaussienne;\{L(z)\}}_k (A(1,0)) \ge m 
\text{ and } \overline\tau^{\lambda,2,\gaussienne;\{L(z)\}}_{\le k}(\R^2)\le M
\text{ and } \left\{\overline\tau^{\lambda,2,L(X); \{L(z)\}}_{\le k} = \overline\tau^{\lambda,2,\gaussienne; \{L(z)\}}_{\le k}\right\}
\right] \\
& \ge 
\P\left[\overline\tau^{\lambda,2,\gaussienne;\{L(z)\}}_k (A(1,0)) \ge m 
\text{ and } \overline\tau^{\lambda,2,\gaussienne;\{L(z)\}}_{\le k}(\R^2)\le M \right] - \epsilon \\
& \ge \survival(\lambda)-2\epsilon.
\end{align*}
This gives the first item. The second item is proven in exactly the same way.
\end{proof}

\subsubsection{Plan and intuition}
\label{s:plan}

\paragraph{Setup.} Let $\lambda>0$, $\epsilon>0$, $d \ge 1$ and $K \in \cK(d)$. 
Recall the definition of $\widehat C^0=\widehat C^0(\lambda,\delta_{1/2},d,K)$ in Section \ref{s:framework:t:penrose-convex}
and the notation $S(\lambda)$ for the a Poisson$(\lambda)$ offspring Galton-Watson process.
The aim is to prove that the inequality
\[
\P[\#\widehat C^0=\infty] \ge S(\lambda)-\epsilon
\]
holds for any $d$ large enough, uniformly in $K \in \cK(d)$.
Recall that $\widehat C^0$ can be built as the set of positions of a pruned version of the \BRW\ $\overline\tau^{\lambda, d, X_K}$
where $X_K$ denotes a random variable with uniform distribution on $X_K$.
Recall in particular the notion of interference defined in \eqref{e:interference}.
See Section \ref{s:brw} for notations on \BRW\ and Section \ref{s:cluster-brw-basic} for the construction of $\widehat C^0$ from 
$\overline\tau^{\lambda, d, X_K}$.
The basic idea is that, up to an event whose probability vanishes when $d$ tends to $\infty$, 
$\widehat C^0$ is infinite when $\overline\tau^{\lambda, d, X_K}$ is infinite.

\paragraph{The underlying Galton-Watson tree.}
The underlying Galton-Watson tree $\tau$ of the \BRW\ $\overline\tau^{\lambda, d, X_K}$ does not depend on $d$ nor on $K$.
It only depends on $\lambda$. 
This is a Galton-Watson process with Poisson$(\lambda)$ offspring starting from one particle.

\paragraph{Control of the interference up to a given generation.}
Consider the case where the root $\root$ has a child $x$ which itself has a child $y$.
In the construction of $\widehat C^0$ we have to reject $y$ because of interference with the root $\root$ if  
$V(y) \in V(\root)+K$\footnote{Actually $V(\root)=0$ but the argument is clearer if we keep writing $V(\root)$.} that is if 
\begin{equation}\label{e:boulette}
[V(x)-V(\root)] + [V(y)-V(x)]  \in K.
\end{equation}
Recall that $X_K$ and $X'_K$ are independent random variables with uniform distribution on $K$.
Condition to the tree $\tau$, the probability of \eqref{e:boulette} is $\P[X_K+X'_K \in K]$. 
By the rearrangement inequality (Theorem \ref{t:riesz}) this probability is at most $\P[X_B+X'_B \in B]$ where, as usual, $B$ is the Euclidean
ball of unit volume.
It is moreover easy to check that $\P[X_B+X'_B \in B]$ tends to $0$ when $d$ tends to infinity (see Lemma \ref{l:somme-unif-boule}).
Therefore $\P[X_K+X'_K \in K]$ tends to $0$ uniformly in $K$ as $d$ tends to infinity.
With these ideas it is quite easy to show that for any given generation $k \ge 1$, 
\[
\P[\text{no particle of }\overline\tau^{\lambda, d, X_K}_{\le k}\text{ is rejected because of interference}] \to 1 \text{ as } d \to \infty \text{ uniformly in } K.
\]

\paragraph{Control of the position of the particles.} 
For various reasons, we need to control the position of the particles.
Fix a linear map $L:\R^d\to\R^2$ given by Theorem \ref{t:Klartag-CLT} for $X_K$.
Recall that $\cN$ denotes a standard Gaussian random vector $\cN$ on $\R^2$.
The map $L$ (which depends on $X_K$ and thus on $d$) fulfills the following property.
The total variation distance between $L(X_K)$ and $\cN$ tends to $0$ when $d$ tends to $\infty$, uniformly in $K$.
Thanks to this property, we can control the value of the image by $L$ of the position of the particles, 
uniformly in $K$ when $d$ tends to $\infty$.
This will be sufficient for our purpose.

\paragraph{Comparison with a super-critical oriented two-dimension percolation process.}
The difficulty is to get a control over all generations of $\overline\tau^{\lambda,d,X_K}$.
Recall the definition of the sets $A_L(i,j)$, $(i,j) \in \overline\cL$ in Section \ref{s:embedding}.
Combining the previous ideas, we can prove the following results which are the basic steps of a renormalization scheme.
Here $m$ (number of seeds) and $k$ (number of generations) are to be suitably chosen.
See Lemma \ref{l:brique}.
The following results hold for $d$ large enough, uniformly in $K$.
\begin{itemize}
\item With a probability close to $S(\lambda)$, no particle of $\overline\tau_{\le k}^{\lambda,d,X_K}$ is rejected by interference 
(and thus all of them belong to the cluster $\widehat C_0$ in our construction) and $m$ particles of $\overline\tau_k^{\lambda,d,X_K}$ 
are located in $A_L(1,0)$. If this is the case, we say that stage $(0,0)$ is a success.
\item With a probability close to $1$, for any $(i,j) \in \cL$, if we start with a set of $m$ particles whose set of position is 
$\cS(i,j) \subset A_L(i,j)$,
then no particle of $\overline\tau_{\le k}^{\lambda,d,X_K ; \cS(i,j)}$ (a \BRW\ with initial set of particles located at $\cS(i,j)$) is rejected by interference 
(when considering only interference within $\overline\tau_{\le k}^{\lambda,d,X_K ; \cS(i,j)}$),
$m$ particles of $\overline\tau_k^{\lambda,d,X_K ; \cS(i,j)}$ belongs to $A_L(i+1,j+1)$ and 
$m$ particles of $\overline\tau_k^{\lambda,d,X_K ; \cS(i,j)}$ belongs to $A_L(i+1,j-1)$.
If this is the case, we say that stage $(i,j)$ is a success (this is not well defined for the moment as it depends on $\cS(i,j)$).
\end{itemize}
If stage $(0,0)$ is a success (which occurs with probability close to $S(\lambda)$), then we can use the position of $m$ particles of 
$\overline\tau_k^{\lambda,d,X_K}$ located in $A_L(1,0)$ 
(recall that all of them belongs to $\widehat C_0$ as none of them was rejected by interference) as a set of seeds $\cS(1,0)$ for stage $(1,0)$.
If stage $(1,0)$ is a success (which occurs with probability close to $1$) we can use the position of $m$ particles of 
$\overline\tau_k^{\lambda,d,X_K ; \cS(1,0)}$
located in $A_L(2,\pm 1)$ as a set of seeds $\cS(2,\pm 1)$ for stage $(2,\pm 1)$ and so on.
With the exception of stage $(0,0)$, we thus have a natural coupling with a super-critical oriented percolation process on $\cL$.

If stage $(0,0)$ is a success, if the oriented percolation process percolates and if there were no interference between \BRW\ of different stages,
then $\widehat C_0$ would be infinite and the proof would be over.
It remains to deal with interference between the \BRW\ of different stages.

\paragraph{Control of the interference between \BRW\ of different stages.}
This is actually the main difficulty of the proof.
Let us mention that in the actual proof we will handle interference between particles of a given \BRW\ and particles of different \BRW\ in a unified
way, based on the following ideas. 
We perform over-pruning (see Section \ref{s:cluster-brw-over}) to build a subset of $\widehat C^0$.
Concretely this means that, when exploring the \BRW, if a particle does not fulfill one of the required properties,
we reject the particle and its progeny.
This depends on the order in which we explore the \BRW, but this is not an issue for our purpose.

\begin{enumerate}
\item We fix a large $M$ and do not explore more than $M$ particles at each stage.
If $M$ is large enough, this does not modify significantly the probability of success at each stage.
Thus there is no drawbacks.
However, this gives a bound on the number of particles at each stage that can generate interference at a later stage.
\item We reject a particle and its progeny if it makes a step whose image by $L$ is too large.
More precisely, for a large $\Lambda$, if $y$ is a child of $x$, we reject $y$ and its progeny if $\|L(V(y)-V(x))\|_2 \ge \Lambda$.
As above, if $\Lambda$ is large enough, there is no drawbacks.
However, there are two advantages:
\begin{enumerate}
\item This reduces the interference region. Recall that $D$ denotes the unit disk of $\R^2$.
The interference of $x$ is $x + K \cap L^{-1}(\Lambda D)$ instead of $x+K$\footnote{Equivalently,
we could have worked from the beginning with the \BRW\ 
\[
\overline\tau^{\lambda\P[\|L(X_K)\|_2 \le \Lambda],d,X_K^\Lambda} 
\]
where $X_K^\Lambda$ is distributed as $X_K$ condition to $\|L(X_K)\|_2 \le \Lambda$.}.
\item This allow to localize the particles at each stage and then to identify the particles that can interfere at a later stage.
As the seeds are in $(i,j)+D$ and as we explore $k$ generations,
all the particles considered at stage $(i,j)$ belongs to $(i,j) + (1+\Lambda k) D$.
\end{enumerate}
\item We reject a particle $x$ and its progeny if we previously examined without rejecting 
a particle $x'$ such that the following condition does {\em not} hold:
\begin{equation}\label{e:planutilisationgood}
\|L(V(x)-V(x'))\|_2 \ge 2 \Lambda \text{ or } V(x)-V(x') \in G(d,K,\eta).
\end{equation}
Here $\eta>0$ is a suitable parameter and $G(d,K,\eta)$ is defined in Section \ref{s:good-gaps}.
By the localization properties, if $x$ is examined at stage $(i,j)$ and if $x'$ has been examined at stage $(i',j')$, then
$\|L(V(x)-V(x'))\|_2 \ge 2 \Lambda$ holds as soon as $(i,j)$ and $(i',j')$ are far enough from each other.
Moreover the number of particles examined at each stage is bounded.
Therefore, for a given $x$, we just have to check the property $V(x)-V(x') \in G(d,K,\eta)$ for a bounded number of particles $x'$.
By Lemma \ref{l:good_gap}, condition to everything but $V(x)-V(\parent x)$
\footnote{In the proof, we will indeed consider conditional probabilities. We therefore have to be careful with these aspects.}
(recall that $\parent x$ denotes the parent of $x$), this holds with high probability for all $d$ large enough, uniformly in $K$.
\end{enumerate}

Thanks to \eqref{e:planutilisationgood}, the probability of interference is small.
Let us explain this point.
Let $x, y, x'$ be three distinct particles where $y$ is a child of $x$.
We have examined and not rejected $x$ and $x'$.
We are examining $y$ and we already now that $\|L(V(x)-V(y)\|_2 \le \Lambda$ holds.
We wonder whether $y$ has to be rejected because of interference with $x'$.
As the interference region of $x'$ is $V(x')+K \cap L^{-1}(\Lambda D)$, we wonder whether $V(y)$ belongs to $V(x')+K \cap L^{-1}(\Lambda D)$.
We condition by everything but $V(y)-V(x)$, which is a random variable uniformly distributed on $K$.
\begin{itemize}
\item If $\|L(V(x)-V(x'))\|_2 \ge 2\Delta$, then $y$ can not be rejected because of interference with $x'$.
Indeed $y$ belongs to $V(x)+K \cap L^{-1}(\Lambda D)$ and therefore $y$ can not belong to $V(x')+K \cap L^{-1}(\Lambda D)$.
\item Otherwise, by \eqref{e:planutilisationgood}, we have $V(x)-V(x') \in G(d,K,\eta)$. 
The probability (because of our conditioning this is a probability on $V(y)-V(x)$) that $V(y)-V(x') = (V(x)-V(x'))+(V(y)-V(x))$ 
belong to $K$ is at most $\eta$ (by definition of $G(d,K,\eta)$).
Thus the probability that $y$ is rejected because of interference with $x'$ is at most $\eta$.
\end{itemize} 
Because of our control on localization and number of particles at each stage, this is sufficient.

\paragraph{A two-step approach to handle interference.}
Let us emphasize that, in addition to the control of the number of particles and the length of the projections of the steps,
the main ingredient is thus the following two-step approach:
\begin{enumerate}
\item First we ensure that all relevant relative positions are good (this is the content of \eqref{e:planutilisationgood}).
\item Then we use this control to control the interference (the set $G(d,K,\eta)$ is designed for this task).
\end{enumerate}

\paragraph{Comparison with the proof by Penrose in the Euclidean case.}
The plan of the proof is the same.
Thanks to results of analysis and high dimension geometry (see Section 2) most parts of the proof of Penrose
can actually be adapted to the non Euclidean case.
One of the main difference is due to the lack of isotropy in our setting.
In particular, the equivalent of the set of good gaps $G(d,K,\eta)$ in the Euclidean setting is simply  $\R^d \setminus \frac 3 4 B$.
In our setting we had to provide an alternative description of this set of good gaps.
Fortunately, an abstract definition was sufficient thanks to rearrangement inequalities (see Theorem \ref{t:riesz}).

\subsubsection{Construction of a subset of $\widehat C^0$ related to an oriented percolation on $\cL$}
\label{s:cluster-perco-orientee}

\paragraph{Parameters.} Fix $\lambda > 1$ and $\epsilon>0$. 
Fix $m,d_1,k,M \ge 1$ as provided by Lemma \ref{l:brique} for the parameters $\lambda$ and $\epsilon$.	
Fix $\troncature \ge 1$ such that $\P[\|\gaussienne\|_2 \ge \troncature ] \le \epsilon/M$.
Let $d_2$ be such that, for all $d \ge d_2$, $\epsilon_{CLT}(d) \le \epsilon/M$ where $\epsilon_{CLT}$ appears in Theorem \ref{t:Klartag-CLT}.
Fix $\eta>0$ such that
\begin{equation}\label{e:eta}
400k^2\troncature^2M^2 \eta \le \epsilon.
\end{equation}
Let $d_3$ be such that, for all $d \ge d_3$,
\begin{equation}\label{e:conditionepsilonG}
400k^2\troncature^2M^2 \eta^{-1}\epsilon_G(d)  \le \epsilon/3.
\end{equation}
Set $d_0=\max(d_1,d_2,d_3)$.

\paragraph{Setting and aim.}
Let $d \ge d_0$ and $K \in \cK(d)$.
Fix $L : \R^d \to \R^2$ given by Theorem \ref{t:Klartag-CLT} for $X_K$.
By definition of $\troncature$ and as $d \ge d_0 \ge d_2$, we get (under an appropriate coupling)
\begin{equation}\label{e:troncature}
\P[\|L(X_K)\|_2 \ge \troncature ] \le \P[L(X_K) \neq \gaussienne] + \P[\|\gaussienne\|_2 \ge \troncature ] \le  2\epsilon/M.
\end{equation}
We aim at proving
\[
\P[\#\widehat C^0=\infty] \ge (\survival(\lambda)-5\epsilon)\theta(1-5\epsilon)
\]
where $\theta$ is defined in Section \ref{s:embedding}. 
Theorem \ref{t:penrose-convex-lower} will follow easily.
Thanks to the choice of parameters, the following properties hold.
\begin{itemize}
\item For all $z \in A_L(0,0)$, 
\begin{equation} \label{e:applilbrique1}
\P\left[\overline\tau^{\lambda,d,X_{K};\{z\}}_k (A_L(1,0)) \ge m 
\text{ and } \overline\tau^{\lambda,d,X_{K};\{z\}}_{ \le k}(\R^d) \le M \right] \ge \survival(\lambda)-\epsilon.
\end{equation}
\item For all $(i,j) \in \cL$ and all $\cS \subset A_L(i,j)$ with cardinality $m$,  
\begin{align} \label{e:applilbrique2}
& \P\left[\overline\tau^{\lambda,d,X_{K};\cS}_{k} (A_L(i+1,j+1)) \ge m 
\text{ and } \overline\tau^{\lambda,d,X_{K};\cS}_{k} (A_L(i+1,j-1)) \ge m 
\text{ and } \overline\tau^{\lambda,d,X_{K};\cS}_{\le k}(\R^d) \le M\right] \nonumber \\
& \ge 1-\epsilon.
\end{align}
\item  \eqref{e:eta}, \eqref{e:conditionepsilonG}, \eqref{e:troncature}.
\end{itemize}

\paragraph{Randomness and $\sigma$-fields.} 
Recall that $X_K$ stands for a random variable with uniform distribution on $K$.
Let
\[
\big( \overline\tau^{i,j,n} )_{(i,j,n) \in \overline\cL \times \{1,\dots,m\}}
\]
be a family of independent copies of $\overline\tau^{\lambda,d,X_{K}}$. 
Let $(\alpha^{i,j})_{(i,j) \in \overline\cL}$ be a family of i.i.d.\ Bernoulli random variables with parameter $1-\epsilon$.
For all $(i,j) \in \cL$ we denote by $\cF_{i,j}$ (resp. $\cF_{i,j}^-$) the $\sigma$-field generated by 
the $\overline\tau^{i',j',n}$ and the $\alpha^{i',j'}$ for $(i',j',n) \in \overline\cL \times \{1,\dots,m\}$ such that $(i',j')$ is smaller 
(resp. strictly smaller) than $(i,j)$
for the lexicographic order.

We will not formalize it but we will merge appropriately pruned versions of the \BRW\ $\overline\tau^{i,j,n}$ to get a unique \BRW\ starting
from one particle located at $0$. This latter \BRW\ is the \BRW\ explored in order to build a subset of $\widehat C^0$.

\paragraph{Further notations and remarks.}
At the beginning, the site $(0,0)$ is active and each site $(i,j) \in \cL$ is inactive. 
Moreover, with $(0,0)$ is associated the singleton $\cS(0,0)=\{0_{\R^d}\} \subset A_L(0,0)$.
We then enumerate the sites $(i,j)$ of $\overline{\cL}$ by lexicographic order.
Some sites $(i,j) \in \cL$ will be activated.
With each active site $(i,j)$ will be associated a subset $\cS(i,j) \subset A_L(i,j)$ of cardinality $m$.
This set of $m$ points will always be, in a coupling with the Boolean model, a subset of $\widehat C^0$.
If at the end of the construction there exists in the graph $\overline\cL$ an infinite path of active sites, 
then $\widehat C^0$ is infinite and percolation occurs in the Boolean model.

To simplify some arguments, we will also associate with each site of $\overline\cL$ a state: open or closed.
It will be done in such that a way that if $\pi$ is an infinite open path in $\overline\cL$ from $(0,0)$,
then $\pi$ only contains active sites and therefore percolation occurs in the Boolean model.

We will use the over-pruning algorithm described in Section \ref{s:cluster-brw-over}.
We will often use over-pruning implicitly by rejecting more particles than necessary
and by not considering the children of some particles (which amount to define their interference region as the empty set).

Set 
\[
G=G(d,K,\eta)
\]
where $G(d,K,\eta)$ is introduced in Section \ref{s:good-gaps}.

\paragraph{Site $(0,0)$.}  We consider the \BRW\ $\overline\tau=\overline\tau^{0,0,1}$.
\begin{enumerate}
\item We examine successively the particles of generation at most $k$ of this \BRW\ in any admissible order.
The last requirement means that:
\begin{itemize}
\item Children are examined after their parents.
\item Children of a given parent are examined in a row: once we start examining one of them, we then examine all the children of this parent.
\end{itemize}
We stop as soon as we have examined all the particles 
or as soon as the particle $x$ under examination fulfills one of the following conditions:
\begin{enumerate}
\item Overpopulation. The particle is the $M$-th particle examined.
\item Bad gap. This occurs in any of the following conditions.
\begin{itemize}
\item $x$ is not the root and $V(x) \not\in V(\parent x)+K \cap L^{-1}(\troncature D)$.
This ensures that the interference region of $\parent x$ is indeed $V(\parent x)+K \cap L^{-1}(\troncature D)$ 
(see Section \ref{s:cluster-brw-over} and the remarks in the paragraph above).
\item There exists a particle $y$ examined strictly before $x$ such that $V(x) \not\in V(y)+ G$.
\end{itemize}
\item Interference.
There exists a particle $y$ examined strictly before $x$ which is not the parent $\parent{x}$
nor a sibling of $x$
and which is such that $V(x) \in V(y)+K \cap L^{-1}(\troncature D)$
\footnote{With the notations of Section \ref{s:cluster-brw-over}, we want to reject any particle which belongs to
\[
\bigcup_{y \in B} V(y) + K \cap L^{-1}(\troncature D).
\]
Let $B'$ be the set of particles examined strictly before $x$ and which are neither $\parent x$ nor a sibling of $x$.
As $B \subset B'$, we are performing over-pruning at this step.}.
\end{enumerate}
One defines the following two sets.
\begin{itemize}
\item The set $\cG(0,0)$ ($\cG$ stands for generated) of all the particles examined
except the last one if it caused Overpopulation or Bad Gap or Interference.
By some abuse of notation, we will sometimes see $\cG(0,0)$ (and other similar sets) as the set of positions of the particles.
We will call them  the particles {\em generated} at stage $(0,0)$. 
Note that each particle whose children have been examined is a generated particle
(this is due to the fact that we examine the particles in an admissible order).
This is a key remark when considering interference later in the construction.
\begin{itemize}
\item In the coupling with the Boolean model, all the points of $\cG(0,0)$ (seen as the set of positions of the particles) belong to $\widehat C^0$.
Indeed, none of them caused a stop by Interference.
\item All the gaps between any two distinct particles of $\cG(0,0)$ are good:
\[
\forall x,y \in \cG(0,0), x \neq y \implies V(y) \in V(y) + G.
\]
Indeed, none of them caused a stop by Bad gap.
\item 
\begin{equation}\label{e:besoindelabel0}
\text{The image by }L\text{ of }\cG(0,0)\text{ is included in }k\troncature D.
\end{equation}
Here we see $\cG(0,0)$ as the set of positions of the particles.
This is due to the fact that the position of the root is $0$, the fact that the image by $L$ of each step belongs to $\troncature D$
and the fact that we did not explore the \BRW\ beyond generation $k$.
\item $\cG(0,0)$ contains at most $M$ points.
\end{itemize}
\item The set $\cG_k(0,0) \subset \cG(0,0)$ of articles of generation $k$ examined
except the last one if it caused Overpopulation or Bad Gap or Interference.
\end{itemize}
\item If
\begin{equation}\label{e:okpasinitial}
\#\big(\cG_k(0,0) \cap A_L(1,0)\big) \ge m
\end{equation}
we say that the site $(0,0)$ open, that the site $(1,0)$ is active and we define $\cS(1,0)$, where $\cS$ stands for seeds,
as the $m$ first points of 
\[
\cG_k(0,0) \cap A_L(1,0)
\]
in Neveu ordering (see Section \ref{s:brw}).
In the coupling with the Boolean model, all the points of $\cS(1,0)$ belong to $\widehat C^0$.
This holds because $\cS(1,0) \subset \cG_k(0,0) \subset \cG(0,0)$ and all points of $\cG(0,0)$ belongs to  $\widehat C^0$.
If \eqref{e:okpasinitial} does not hold, we say that the site $(0,0)$ is closed and $(1,0)$ remains inactive.
\end{enumerate}

\paragraph{Stage $(i,j)$.} Recall that we now consider successively each $(i,j)\in\cL$ by lexicographic order.
If $(i,j)$ is inactive, then we decide as follows: it is open if $\alpha^{i,j}=1$ ; it is closed otherwise.
Therefore, in this case, it is open independently of everything else with probability $1-\epsilon$.

Thereafter, we consider the case where $(i,j)$ is active.
The set $\cS(i,j)$ is well defined. 
It's a subset of cardinal $m$ of $A_L(i,j)$.
List the point of $\cS(i,j)$ in an arbitrary order: $\cS(i,j)=\{x^1,\dots,x^m\}$.
We consider the $m$ \BRW\ $\overline\tau^n=x^n+\overline\tau^{i,j,n}$ 
where $x^n+\overline\tau^{i,j,n}$ designates the \BRW\ $\overline\tau^{i,j,n}$  in which $x^n$ was added to the position of all the particles.
We gather these $m$ \BRW\ into a single \BRW\ originating from $\cS(i,j)$.
We denote it by $\overline\tau^{\cS(i,j)}$.
\begin{enumerate}
\item We examine successively the particles of generation between $1$ and $k$ of $\overline\tau^{\cS(i,j)}$ in any admissible order
(see Stage $(0,0)$).
In particular, we never examine the roots of this \BRW\ 
(note that the positions of the roots are also positions of particles of \BRW\ examined during one of the previous stages).
We stop as soon as we have examined all the particles or as soon as the particle $x$ under examination fulfills one of the following conditions:
\begin{enumerate}
\item Overpopulation. The particle is the $M$-th particle examined during this stage $(i,j)$.
\item Bad gap. One of the following conditions occurs.
\begin{itemize}
\item $V(x) \not\in V(\parent x)+K \cap L^{-1}(\troncature D)$.
This ensure that the interference region of $\parent x$ is indeed $V(\parent x)+K \cap L^{-1}(\troncature D)$ 
(see stage $(0,0)$ for more explanations).
\item There exists a particle $y$ examined strictly before $x$ during this stage $(i,j)$
or generated during one of the previous stages 
(its position then belongs to $\cG(i',j')$ for some $(i',j') < (i,j)$ for the lexicographic order) such that:
\[
\|L(V(x))-L(V(y))\|_2 \le 2\troncature \text{ and } V(x) \not\in V(y)+G.
\]
\end{itemize}
\item Interference.  There exists a particle $y$ examined strictly before $x$ during this stage $(i,j)$
or generated during one of the previous stages 
(its position then belongs to $\cG(i',j')$ for some $(i',j') < (i,j)$ for the lexicographic order) such that:
this is not the parent $\parent{x}$
nor a sibling of $x$
and we have:
\[
V(x) \in V(y)+K \cap L^{-1}(\troncature D). 
\]
\end{enumerate}

One defines the following two sets.

\begin{itemize}
\item The set $\cG(i,j)$ of all particles examined except the last one it it caused
Overpopulation or Bad Gap or Interference.
We will call them  the particles {\em generated} at stage $(i,j)$. 
As before, each particle whose children have been examined is a generated particle.
\begin{itemize}
\item In the coupling with the Boolean model, all the points of $\cG(i,j)$ belong to $\widehat C^0$.
Indeed, none of them caused a stop by Interference.
\item All the gap between two distinct points $x$ and $y$ of
\[
\bigcup_{(i',j') \le (i,j)} \cG(i',j')
\]
satisfies
\begin{equation}\label{e:hoho}
\|L(V(x))-L(V(y))\|_2 > 2\troncature \text{ or } V(x) \in V(y)+G.
\end{equation}
Indeed, none of them caused a stop by Bad Gap.
\item 
\begin{equation}\label{e:besoindelabel}
\text{The image by }L\text{ of }\cG(i,j)\text{ is included in }(i,j)+2k\troncature D.
\end{equation} 
This is due to the fact that the positions of the roots belong to $(i,j)+D$,
the fact that the image by $L$ of each step belongs to $\troncature D$, 
the fact that we only explored the first $k$ generations
and the fact that $1+k\troncature \le 2k\troncature$.
\item $\cG(i,j)$ containts at most $M$ points.
\end{itemize}
\item The set $\cG_k(i,j) \subset \cG(i,j)$ of 	particles of generation $k$ examined, 
except the last one if it caused a stop by Overpopulation or Bad Gap or Interference.
\end{itemize}
\item If
\begin{equation} \label{e:okpassuivant}
\#\big(\cG_k(i,j) \cap A_L(i+1,j+1)\big) \ge m \text{ and } \#\big(\cG_k(i,j) \cap A_L(i+1,j-1)\big) \ge m
\end{equation}
then:
\begin{itemize}
\item We say that the site $(i,j)$ is open.
\item If the site $(i+1,j+1)$ is inactive then we say that it is henceforth active and we define $\cS(i+1,j+1)$ as the $m$ first point
by Neveu ordering (see Section \ref{s:brw}) of $\cG_k(i,j) \cap A_L(i+1,j+1)$.
In the coupling with the Boolean model, all the points of $\cS(i+1,j+1)$ belong to $\widehat C^0$.
\item If the site $(i+1,j-1)$ is inactive then we say that it is henceforth active and we define $\cS(i+1,j-1)$ as the $m$ first point
by Neveu ordering of $\cG_k(i,j) \cap A_L(i+1,j-1)$.
In the coupling with the Boolean model, all the points of $\cS(i+1,j-1)$ belong to $\widehat C^0$.
\end{itemize}
Otherwise, we say that the site $(i,j)$ is closed.
\end{enumerate}

\subsubsection{Bounds on conditional probabilities}
\label{s:proba-briques}

The aim of this section is to prove the following lemmas.
Parameters have been fixed in Section \ref{s:cluster-perco-orientee}.

\begin{lemma} \label{l:mesurabilite}
For all $(i,j) \in \overline{\cL}$, 
$
\{ (i,j) \text{ active}\} \in \cF(i,j)^- \text{ and } \{ (i,j) \text{ open}\}\in \cF(i,j).
$
\end{lemma}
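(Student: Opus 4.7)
The plan is to prove the two measurability statements simultaneously by induction on the lexicographic order on $\overline\cL$, in a strengthened form. For every $(i,j) \in \overline\cL$ I claim: (a)~the event $\{(i,j)\text{ active}\}$, and on this event the seed set $\cS(i,j)$, are $\cF(i,j)^-$-measurable; (b)~the event $\{(i,j)\text{ open}\}$ together with the sets $\cG(i,j)$ and $\cG_k(i,j)$ are $\cF(i,j)$-measurable. The strengthening is essential because activeness, openness and the auxiliary sets $\cS,\cG,\cG_k$ are defined in terms of each other across consecutive stages, so an induction on the openness/activeness events alone cannot close.

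For the base case $(i,j)=(0,0)$ the site is active by construction with the deterministic seed $\cS(0,0)=\{0\}$; the exploration at stage $(0,0)$ is a deterministic, admissibly-ordered functional of the single tree $\overline\tau^{0,0,1}$ alone, so $\cG(0,0)$ and $\cG_k(0,0)$ are $\cF(0,0)$-measurable, and openness is read off from $\cG_k(0,0)$ via \eqref{e:okpasinitial}.

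For the inductive step, fix $(i,j)>(0,0)$ and assume the claim for every $(i',j')<(i,j)$. The event $\{(i,j)\text{ active}\}$ equals the union, over those parents $(i',j')\in\overline\cL$ of $(i,j)$ in the oriented graph with $(i',j')<(i,j)$, of the events
\[
\{(i',j')\text{ active}\}\cap\{\#(\cG_k(i',j')\cap A_L(i,j))\ge m\};
\]
by induction each factor lies in $\cF(i',j')\subset\cF(i,j)^-$, so the union lies in $\cF(i,j)^-$, and on the event that $(i,j)$ is active the seed set $\cS(i,j)$ is obtained (via Neveu ordering) from the $\cG_k(i',j')$ of the first parent that triggered activation, hence is $\cF(i,j)^-$-measurable. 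If $(i,j)$ is inactive, then $\{(i,j)\text{ open}\}=\{\alpha^{i,j}=1\}\in\cF(i,j)$. If $(i,j)$ is active, the stage-$(i,j)$ exploration is a deterministic, admissibly-ordered functional of $\overline\tau^{i,j,1},\dots,\overline\tau^{i,j,m}$, of $\cS(i,j)$, and of $\bigcup_{(i',j')<(i,j)}\cG(i',j')$. All these data are $\cF(i,j)$-measurable (the first group trivially, the last two by induction), and the three stopping criteria (Overpopulation, Bad Gap, Interference) are Borel functions of this data, so $\cG(i,j)$ and $\cG_k(i,j)$ are $\cF(i,j)$-measurable; condition \eqref{e:okpassuivant} then determines $\{(i,j)\text{ open}\}$.

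The only delicate point is the bookkeeping in the stopping criteria: each particle under examination is compared with particles examined earlier at the current stage (themselves Borel functionals of $\overline\tau^{i,j,\cdot}$ and $\cS(i,j)$) and with particles of $\bigcup_{(i',j')<(i,j)}\cG(i',j')$, which is $\cF(i,j)^-$-measurable by induction. Fixing once and for all an admissible exploration order at each stage removes any ambiguity, and the induction closes, yielding in particular the two measurability statements in the lemma.
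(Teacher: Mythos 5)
The paper's proof of this lemma is the single sentence ``This is straightforward by construction,'' so your write-up is exactly the level of detail the paper delegates to the reader, and your approach (induction on the lexicographic order with a strengthened hypothesis carrying along $\cS(\cdot,\cdot)$, $\cG(\cdot,\cdot)$ and $\cG_k(\cdot,\cdot)$) is the natural way to make it precise. One slip: the decomposition you state for $\{(i,j)\text{ active}\}$ is not an equality for $i\ge 2$. A parent $(i',j')$ triggers activation of $(i,j)$ only when $(i',j')$ is active \emph{and open}, and openness via \eqref{e:okpassuivant} requires the cardinality bound at \emph{both} children $A_L(i'+1,j'+1)$ and $A_L(i'+1,j'-1)$; your event retains only the one involving $A_L(i,j)$ and is therefore strictly larger in general (it coincides only for $(i,j)=(1,0)$, where openness of $(0,0)$ is the single condition \eqref{e:okpasinitial}). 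The correct decomposition is
\[
\{(i,j)\text{ active}\}=\bigcup_{(i',j')\to(i,j)}\{(i',j')\text{ active}\}\cap\{(i',j')\text{ open}\},
\]
which lies in $\cF(i,j)^-$ since, by your inductive hypothesis (b), $\{(i',j')\text{ open}\}\in\cF(i',j')\subset\cF(i,j)^-$; with this fix the rest of your argument goes through unchanged.
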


\begin{lemma} \label{l:comparaisoninitiale}
We have
$
\P[(0,0) \text{ open}] \ge \survival(\lambda)-5\epsilon.
$
\end{lemma}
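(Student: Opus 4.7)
The plan is to cover the event $\{(0,0)\text{ open}\}$ by a good event on the underlying \BRW\ $\overline\tau^{0,0,1}$, minus the (small) events where the examination at stage $(0,0)$ aborts prematurely because of Bad Gap or Interference. More precisely, set
\[
\Omega_1 = \left\{\overline\tau^{\lambda,d,X_K}_k(A_L(1,0)) \ge m \text{ and } \overline\tau^{\lambda,d,X_K}_{\le k}(\R^d) \le M\right\}.
\]
By~\eqref{e:applilbrique1} applied with $z=0$ we have $\P[\Omega_1]\ge\survival(\lambda)-\epsilon$. On $\Omega_1$ the \BRW\ carries at most $M$ particles in the first $k$ generations, so the Overpopulation clause never fires, and at least $m$ of its generation-$k$ particles already lie in $A_L(1,0)$; hence if in addition neither Bad Gap nor Interference stops the examination, then $\cG_k(0,0)$ contains all generation-$k$ particles and~\eqref{e:okpasinitial} holds. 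It thus suffices to bound $\P[\Omega_1\cap\text{Bad Gap}]$ and $\P[\Omega_1\cap\{\text{no Bad Gap}\}\cap\text{Interference}]$ separately.

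For Bad Gap I would split into the two sub-clauses. For the ``long-step'' clause, conditioning on the tree, each non-root particle contributes a fresh step distributed as $X_K$; since $X_K\in K$ almost surely, the bad event reduces to $\|L(X_K)\|_2>\troncature$, which by~\eqref{e:troncature} has probability at most $2\epsilon/M$. Summing over the at most $M$ non-root particles available on $\Omega_1$ contributes at most $2\epsilon$. For the second clause, for any two distinct particles $x\ne y$ the paths from their least common ancestor are disjoint, so $V(x)-V(y)$ can be written as a sum of at least one fresh $\pm X_K$ step plus independent others; since $K$, hence $X_K$, is symmetric, one may write $V(x)-V(y)=W+X_K^\ast$ with $X_K^\ast\sim X_K$ independent of $W$, and Lemma~\ref{l:good_gap} applied conditionally on $W$ yields $\P[V(x)\notin V(y)+G]\le\eta^{-1}\epsilon_G(d)$. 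A union bound over the at most $M^2$ pairs combined with~\eqref{e:conditionepsilonG} contributes at most $\epsilon/3$.

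For Interference, the idea is to exploit the good-gap control just established. Fix a candidate pair $(x,y)$ with $y$ examined strictly before $x$, $y\ne\parent{x}$ and $y$ not a sibling of $x$. The admissibility of the examination order rules out $y$ being a descendant of $x$, and since children of a common parent are examined consecutively and $y$ is not a sibling of $x$, it is in fact examined strictly before $\parent{x}$. Consequently the fresh step $X_K^\ast:=V(x)-V(\parent{x})$ enters neither $V(\parent{x})$ nor $V(y)$, and is thus independent of $Z:=V(\parent{x})-V(y)$. On the no-Bad-Gap event one has $Z\in G$ (since the Bad-Gap test already succeeded at $\parent{x}$ against $y$), and then by the very definition of $G$,
\[
\P[V(x)-V(y)\in K \mid Z]=\P[Z+X_K^\ast\in K \mid Z]\le\eta \quad \text{on } \{Z\in G\}.
\]
Summing over the at most $M^2$ pairs and using~\eqref{e:eta} bounds this term by $M^2\eta\le\epsilon$.

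Combining the three estimates gives $\P[(0,0)\text{ open}]\ge(\survival(\lambda)-\epsilon)-(2\epsilon+\epsilon/3)-\epsilon\ge\survival(\lambda)-5\epsilon$. The step requiring the most care is the interference bound: one must verify, using the admissible order and the exclusion of the parent and siblings, that there is always a fresh uniform step separating $x$ from all previously examined particles, so that the good gap between $\parent{x}$ and $y$ genuinely governs the conditional probability that $x$ lands in the interference region of $y$.
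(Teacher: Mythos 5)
Your proof is essentially correct and follows the same decomposition the paper uses: isolate the good event furnished by Lemma~\ref{l:brique} (via \eqref{e:applilbrique1}), bound \emph{Bad Gap} by a union over at most $M$ (resp.\ $M^2$) particles (resp.\ pairs) using \eqref{e:troncature} and Lemma~\ref{l:good_gap}/\eqref{e:conditionepsilonG}, then bound \emph{Interference} by exploiting the already-verified good gap and the definition of $G$, with $M^2\eta\le\epsilon$ from \eqref{e:eta}. The arithmetic checks out.

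One intermediate justification is wrong, though, even if the conclusion you draw from it is right. You assert that, since $y$ is neither the parent nor a sibling of $x$ and children of a common parent are examined in a row, $y$ must have been examined strictly before $\parent{x}$. This is false: in an admissible order, $y$ can be examined \emph{after} $\parent{x}$ but before the block of $\parent{x}$'s children begins (take, say, a root with children $a,b$, examine $a$, then $b$, then the children $c_1,c_2$ of $a$, then the children $e_1,e_2$ of $b$; with $x=e_1$, $y=c_1$, you have $\parent{x}=b$ examined before $y=c_1$). What is actually true, and is all you need, is that $y$ is not a descendant of $x$ (descendants of $x$ are examined after $x$); this already guarantees that the step $V(x)-V(\parent{x})$ does not enter $V(y)$, hence the independence of $X_K^\ast$ from $Z=V(\parent{x})-V(y)$. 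Likewise, the assertion $Z\in G$ on the no-Bad-Gap event is correct whether $\parent x$ or $y$ was examined first, by symmetry of $G$. So you should drop the incorrect ``examined strictly before $\parent{x}$'' claim and instead argue directly, as the paper does, that conditionally on the underlying tree the fresh step $V(x)-V(\parent{x})$ is independent of the positions of all previously examined particles.
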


\begin{lemma} \label{l:comparaisonsuite}
For all $(i,j) \in \cL$,
$
\P[(i,j) \text{ open }| \cF^-(i,j)] \ge 1-5\epsilon.
$
\end{lemma}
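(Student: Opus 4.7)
My plan is to condition on $\cF^-(i,j)$ and split according to whether the site $(i,j)$ is active, an event that is $\cF^-(i,j)$-measurable by Lemma \ref{l:mesurabilite}. If $(i,j)$ is inactive, the statement is immediate: the site is declared open iff $\alpha^{i,j}=1$, and $\alpha^{i,j}$ is independent of $\cF^-(i,j)$ with $\P[\alpha^{i,j}=1]=1-\epsilon \ge 1-5\epsilon$. From now on I restrict attention to the active case, in which $\cS(i,j) \subset A_L(i,j)$ of cardinality $m$ is $\cF^-(i,j)$-measurable, while the $m$ independent copies $\overline\tau^{i,j,1},\dots,\overline\tau^{i,j,m}$ used at stage $(i,j)$ are independent of $\cF^-(i,j)$.

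I will decompose the event ``$(i,j)$ closed'' into four bad events and bound their conditional probabilities separately. First, the \emph{BRW failure} event that the raw \BRW\ $\overline\tau^{\cS(i,j)}$ fails to place at least $m$ particles in each of $A_L(i+1,j\pm 1)$ at generation $k$ or produces more than $M$ particles in the first $k$ generations: by \eqref{e:applilbrique2} this has conditional probability at most $\epsilon$. Second, the \emph{Bad Gap from a long step} event that some examined particle $x$ has $V(x)-V(\parent{x}) \notin K \cap L^{-1}(\troncature D)$: since at most $M$ such steps are considered at stage $(i,j)$ and each is a fresh independent copy of $X_K$ drawn from the $\overline\tau^{i,j,n}$, \eqref{e:troncature} and a union bound give at most $2\epsilon$. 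Third, the \emph{Bad Gap from a non-good gap} event that there exist examined $x$ at stage $(i,j)$ and $y$ examined earlier (at stage $(i,j)$ or at a previous stage) with $\|L(V(x))-L(V(y))\|_2 \le 2\troncature$ and $V(x) \notin V(y)+G$: by the localization bounds \eqref{e:besoindelabel0}--\eqref{e:besoindelabel}, only stages within $L$-distance at most $6k\troncature$ of $(i,j)$ can contribute such $y$, and each stage contributes at most $M$ particles, yielding at most $400 k^2\troncature^2 M^2$ relevant pairs; Lemma \ref{l:good_gap} applied conditionally on everything except the fresh step $V(x)-V(\parent{x}) \sim X_K$ gives at most $\eta^{-1}\epsilon_G(d)$ per pair, and \eqref{e:conditionepsilonG} then yields at most $\epsilon/3$ overall.

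The delicate event is \emph{Interference}: some examined $x$ at stage $(i,j)$ and some previously examined $y$, distinct from $\parent{x}$ and from every sibling of $x$, satisfy $V(x) \in V(y) + K \cap L^{-1}(\troncature D)$. Following the two-step scheme of Section \ref{s:plan}, I observe that such an interference forces $\|L(V(\parent{x}))-L(V(y))\|_2 \le 2\troncature$, because $\|L(V(x))-L(V(\parent{x}))\|_2 \le \troncature$ (Bad Gap of type I passed for $x$) and $\|L(V(x))-L(V(y))\|_2 \le \troncature$ (the $L^{-1}(\troncature D)$ constraint). Now $\parent{x}$ and $y$ are two distinct particles both examined strictly before $x$; whichever of them was examined second underwent a Bad Gap of the second type against the first, so, having passed, either the $L$-distance exclusion holds (which we have just ruled out) or the corresponding difference lies in $G$, and by symmetry of $G$ (noted in Section \ref{s:good-gaps}) we obtain $V(\parent{x})-V(y) \in G$ in the remaining case. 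Conditioning on everything except the fresh step $V(x)-V(\parent{x}) \sim X_K$, we then have $V(x)-V(y)= z+X_K$ with $z=V(\parent{x})-V(y) \in G$, and by definition of $G$ this falls in $K$ with probability at most $\eta$. A union bound over the same at most $400 k^2\troncature^2 M^2$ relevant pairs, combined with \eqref{e:eta}, bounds the conditional probability of the Interference event by at most $\epsilon$. Summing the four contributions gives at most $\epsilon+2\epsilon+\epsilon/3+\epsilon < 5\epsilon$, as required. The principal obstacle is precisely this interference bound: it requires the joint use of the symmetry of $G$ (so that the Bad Gap check performed at whichever of $\parent{x}$ or $y$ was examined second propagates to a bound on $V(\parent{x})-V(y)$), the localization provided by Bad Gap of type I (to confine the relevant pairs in $L$), and a clean conditioning scheme under which $V(x)-V(\parent{x})$ remains a fresh uniform $X_K$ given the past.
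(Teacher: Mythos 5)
Your proof is correct and follows essentially the same route as the paper's: the same split on $\{(i,j)\ \text{active}\}$ using Lemma \ref{l:mesurabilite}, the same decomposition of $\{(i,j)\ \text{closed, active}\}$ into Overpopulation/Else (controlled by \eqref{e:applilbrique2}), long-step Bad Gap (controlled by \eqref{e:troncature}), non-good-gap Bad Gap (controlled by Lemma \ref{l:good_gap} and \eqref{e:conditionepsilonG}), and Interference (controlled by the two-step scheme and \eqref{e:eta}), with the same localization-based counting of $(x,y)$ pairs within $200$--$400\, k^2\troncature^2 M^2$. The one place where you are slightly more explicit than the paper is the propagation of the good-gap invariant to $V(\parent{x})-V(y)$, which the paper compresses into the statements \eqref{e:hoho}, \eqref{e:haha}, \eqref{e:hehe}; your unwinding of it via "whichever of $\parent x, y$ was examined second" plus symmetry of $G$ is exactly what those equations encode.
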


\begin{proof}[Proof of Lemma \ref{l:mesurabilite}] This is straightforward by construction.
\end{proof}

\begin{proof}[Proof of Lemma \ref{l:comparaisoninitiale}]  
We have 
\begin{equation}\label{e:casinitialbla1}
\{(0,0) \text{ closed} \} \setminus (\Overpopulation \cup \bad \cup \rejection ) \subset \autre
\end{equation}
where
\begin{align*}
\Overpopulation & = \{\text{the enumeration stops because of Overpopulation}\},\\
\bad & = \{\text{the enumeration stops because of Bad gap and not Overpopulation}\}, \\
\rejection & = \{\text{the enumeration stops because of Interference and not Overpopulation or Bad gap}\}, \\
\autre & = \{ \overline\tau_k(A_L(1,0)) < m\}
\end{align*}
and where $\overline\tau$ is the \BRW\ used in Stage $(0,0)$.
Indeed, if the event on the left-hand side of \eqref{e:casinitialbla1} occurs, then \eqref{e:okpasinitial} does not hold and 
$
\cG_k(0,0) = \{V(x), x \in \overline\tau_k\}
$
hence
$
\overline\tau_k(A_L(1,0))<m.
$
As a consequence
\begin{equation}\label{e:casinitialbla2}
\{(0,0) \text{ closed } \} \subset \bad \cup \rejection \cup \Overpopulation \cup \autre.
\end{equation}

Let us give an upper bound for the probability of $\bad$.
We have
$
\bad \subset \bad_1 \cup \bad_2
$
with 
\[
\bad_1 = \bigcup_x \{V(x) \not\in V(\parent{x})+K \cap L^{-1}(\troncature D)\} 
\text{ and } \bad_2= \bigcup_{x \neq y} \{V(x) \not\in V(y) + G\}  \setminus \bad_1
\]
where for $\bad_1$ the union is over all $x$ in the $M$ first particles (otherwise this is the event Overpopulation which occurs)
of the tree except the root  and where for $\bad_2$ the union of over the same set of $x$ and over all $y$ strictly preceding $x$ 
(for the order used for the examination).

The event $\bad_2$ is then the union of at most $M^2$ events.
Rewriting this events, we get
\[
\bad_2 \subset \bigcup_{x \neq y} \{V(\parent{x})-V(y) + V(x)-V(\parent{x}) \not\in  G\}.
\]
But, condition to the underlying Galton-Watson tree,
$V(x)-V(\parent{x})$ is independent of $(V(\parent{x}),V(y))$ (actually it is independent of the positions of all previously examined particles)
and then of $V(\parent{x})-V(y)$.
Moreover $V(x)-V(\parent{x})$ has the same distribution as $X_{K}$.
Therefore, conditioning by the underlying tree $\tau$ we have, for all $x,y$ as above,
\[
\P[V(\parent{x})-V(y) + V(x)-V(\parent{x}) \not\in  G | \tau] \le \sup_{z \in \R^d} \P[z+X_K \not\in G] \le \eta^{-1}\epsilon_G(d)
\]
where $\epsilon_G(d)$ is defined in Lemma \ref{l:good_gap}.
We then get
\begin{align*}
\P[\bad_2] \le M^2 \eta^{-1} \epsilon_G(d).
\end{align*}
Hence
$
\P[\bad_2] \le \epsilon
$
by \eqref{e:conditionepsilonG}. (We also use the fact that several of our constants are greater than $1$.)

The event $\bad_1$ is the union of at most $M$ event. 
Arguing as above and using \eqref{e:troncature}, we get $\P[\bad_1] \le M 2\epsilon/M = 2\epsilon$. Thus
\[
\P[\bad] \le 3\epsilon.
\]

Let us now provide an upper bound for the probability of the event $\rejection$.
Note $x$ and $y$ the two particles which cause the event $\rejection$.
In particular:
\begin{itemize}
\item $x$ is one of the first $M$ particles (otherwise this is the event Overpopulation which occurs) of the tree except the root 
\item $y$ is a particle of $\overline\tau$ strictly preceding $x$ (for the order of enumeration).
\item $V(x) \in V(y) + K$.
\item $V(\parent{x}) \in V(y) + G$ because otherwise $\bad$ would occur instead of $\rejection$ (recall also $G=-G$).
\end{itemize} 
Thus,
\begin{align*}
\rejection 
 & \subset \bigcup_{x,y} \{V(x) \in V(y) + K\} \cap \{V(\parent{x}) \in V(y)+G\} \\
 & \subset \bigcup_{x,y} \{V(\parent{x}) + (V(x)-V(\parent{x}))	 \in V(y) + K\} \cap \{V(\parent{x}) \in V(y)+G\} 
\end{align*}
where the union is over all $x$ in the first $M$ particles of $\overline\tau$ except the root
and $y$ in the particle of $\overline\tau$ strictly preceding $x$.
The right-hand side is then the union of at most $M^2$ events.
By definition of $G=G(d,k,\eta)$ and using a reasoning similar to the one used for the event Bad, we then deduce
\begin{align*}
\P[\rejection]
& \le M^2 \eta  \le \epsilon
\end{align*}
by \eqref{e:eta}.
Finally, by \eqref{e:applilbrique1}, we have
\[
\P[\Overpopulation \cup \autre] \le 1-\survival(\lambda)+\epsilon.
\]
As a consequence,
$
\P[(0,0) \text{ open}] \ge \survival(\lambda)-5\epsilon.
$
\end{proof}

\begin{proof}[Proof of Lemma \ref{l:comparaisonsuite}] Let $(i,j) \in \cL$.
The event $\{(i,j) \text{ is active}\}$ is $\cF^-(i,j)$ measurable.
Therefore, we have to show
\[
\P[(i,j) \text{ open }| \cF^-(i,j)] \ge 1-\epsilon \text{ on the event } \{(i,j) \text{ is active}\}
\]
and
\[
\P[(i,j) \text{ open }| \cF^-(i,j)] \ge 1-\epsilon \text{ on the event } \{(i,j) \text{ is inactive}\}.
\]
The second property is straightforward. Indeed, when $(i,j)$ is inactive, 
$(i,j)$ has been defined as open independently of everything else with probability $1-\epsilon$.
Let us prove the first property.

{\em We now work on the event $\{(i,j) \text{ is active}\}$.}

On the event $\{(i,j) \text{ is active}\}$ there is a well defined set $\cS(i,j)$ with cardinality $m$ 
whose points are the starting points of $m$ \BRW.
This set is measurable with respect to $\cF^-(i,j)$.
We also have the \BRW\ $\overline\tau^{\cS(i,j)}$ which has been used in Stage $(i,j)$.
We have, as in the proof of Lemma \ref{l:comparaisoninitiale},
\[
\{(i,j) \text{ is closed } \} \cap \{(i,j) \text{ is active}\} \subset \bad \cup \rejection \cup \Overpopulation \cup \autre
\]
where the events $\bad$, $\rejection$ and $\Overpopulation$ are defined as in the proof of Lemma \ref{l:comparaisoninitiale} and where
\[
\autre = \{ \overline\tau_k^{\cS(i,j)}(A_L(i+1,j+1)) < m \text{ or } \overline\tau_k^{\cS(i,j)}(A_L(i+1,j-1)) < m\}.
\]

Let us provide an upper bound for the conditional probability of the event $\bad$.
The proof is similar to the one in Lemma \ref{l:comparaisoninitiale}.
The proof is identical for $\bad_1$.
For $\bad_2$, the combinatorial factor is not $M^2$ anymore.
There are at most $M$ choices for $x$.
Let us fix some $x$.
There are at most $M$ choices for $y$ if $y$ has been examined during Stage $(i,j)$.
If $y \in \cG(i',j')$ for $(i',j') < (i,j')$, then $L(V(y)) \in (i',j')+2k\troncature D$ (see \eqref{e:besoindelabel0} and \eqref{e:besoindelabel}).
Similarly, $L(V(x)) \in (i,j)+2k\troncature D$ (otherwise $\bad_1$ would occured and not $\bad_2$).
As a consequence, the condition $\|L(V(x)) - L(V(y)) \|_2 \le 2\troncature$ yields 
\[
\|(i,j)-(i',j')\|_2  \le (4k+2)\troncature \le 6k\troncature.
\]
Therefore there are at most $(12k\troncature+1)^2$ choice for $(i',j')$ and therefore at most 
\[
(12k\troncature+1)^2M+M \le 200 k^2\troncature^2 M
\]
choices for $y$.
Finally, the number of choices for $(x,y)$  is at most $200 k^2\troncature^2 M^2$.
As a consequence, arguing as in Lemma \ref{l:comparaisoninitiale},
\begin{align*}
\P[\bad | \cF^-(i,j)]
 & \le 200k^2\troncature^2M^2 \sup_{z \in \R^d} \P[z + X_{K} \not\in G ] \\
 & \le 200k^2\troncature^2M^2 \eta^{-1}\epsilon_G(d).
\end{align*}
where $\epsilon_G(d)$ is defined in Lemma \ref{l:good_gap}.
Therefore,
\[
\P[\bad| \cF^-(i,j)] \le \P[\bad_1| \cF^-(i,j)]+\P[\bad_2| \cF^-(i,j)] \le 3\epsilon
\]
by \eqref{e:conditionepsilonG}.

Let us now give an upper bound for the probability of the event $\rejection$.
Note $x$ and $y$ the two particles which cause the event $\rejection$.
In particular,
\begin{itemize}
\item $x$ is one of the first $M$ particles (otherwise the event Overpopulation would occur) of $\overline\tau$ 
not belonging to generation $0$.
\item $y$ is a particle examined strictly before $x$ during this stage $(i,j)$
or generated during one of the previous stage (its position belongs in this case to $\cG(i',j')$ for some $(i',j') < (i,j)$ 
for the lexicographical order).
\item $y$ is not $\parent{x}$.
\item $V(x) \in V(y)+K \cap L^{-1}(\Lambda D)$. This property yields $\|L(V(x))-L(V(y))\|_2 \le \troncature$.
By an argument already used to give an upper bound for the event \bad, we get that there are at most $400k^2\troncature^2M^2$ choices for $(x,y)$.
By $\|L(V(x))-L(V(y))\|_2 \le \troncature$ we also get
\begin{equation}\label{e:haha}
\|L(V(\parent{x}))-L(V(y))\|_2 \le 2\troncature.
\end{equation}
Indeed, as $\bad$ does not occur, $\|L(V(x))-L(V(\parent x))\|_2 \le \troncature$. 
\item
 $V(\parent{x}) \in V(y) + G$. 
Indeed, we have
\begin{equation}\label{e:hehe}
V(\parent{x}), V(y) \in \bigcup_{(i',j') \le (i,j)} \cG(i',j').
\end{equation}
This is clear for $y$ if $y$ has been generated during some Stage $(i',j')<(i,j)$.
This is also true if $y$ has been examined during the current stage.
Indeed, $y$ has been examined before $x$.
Therefore $y$ did not stopped the enumeration of the particles and therefore $V(y) \in \cG(i,j)$.
With the same argument we check that this is also true for $V(\parent{x})$.
The required result is then a consequence of \eqref{e:hoho}, \eqref{e:haha}, \eqref{e:hehe} and of $\parent{x} \neq y$.
\end{itemize}
We then have
\begin{equation}\label{e:rejectionunion}
\rejection \subset 
\bigcup_{x,y} \{V(x) \in V(y) + K\} \cap \{V(\parent{x}) \in V(y)+G\} 
\end{equation}
where, in addition to the properties describe by the two events, $x$ and $y$ are as above.
In particular, as already mentioned, there are at most $400k^2\troncature^2M^2$ choices for $(x,y)$.
By definition of good gaps we then get, with the same arguments as before (we only use the randomness on $V(x)-V(\parent{x})$),
\begin{align*}
\P[\rejection| \cF^-(i,j)]
& \le 400k^2\troncature^2M^2 \eta \le \epsilon
\end{align*}
by \eqref{e:eta}.

Finally, by \eqref{e:applilbrique2}, we get
\[
\P[\Overpopulation \cup \autre| \cF^-(i,j)] \le \epsilon.
\]
As a consequence, (we are still working on the $\cF_{i,j}^{-}$ measurable event $\{(i,j) \text{ is active}\}$),
\[
\P[(i,j) \text{ open} | \cF^-(i,j)] \ge 1-5\epsilon.
\]
\end{proof}

\subsubsection{Proof of Theorem \ref{t:penrose-convex-lower}}
\label{s:preuve-t-penrose-convex-lower}

In Section \ref{s:cluster-perco-orientee} we fixed $\lambda>1$ and $\epsilon>0$.
We then got some integer $d_0$ and several other parameters satisfying various properties.
We then let $d \ge d_0$ and $K \in \cK(d)$ and built some process in Section \ref{s:cluster-perco-orientee}.
We studied some properties of this process in Section \ref{s:proba-briques}.
We now conclude the proof of  Theorem \ref{t:penrose-convex-lower}.

Let $(i,j), (i',j') \in \overline\cL$ be such that $(i,j) \to (i',j')$, that is there is an arrow from $(i,j)$ to $(i',j')$ in the graph $\overline\cL$.
If $(i,j)$ is active and open, then $(i',j')$ is active and $\cS(i',j')$ is a well defined subset of $\widehat C^0$.

Recall that $(0,0)$ is active. 
If there exists an infinite path $\pi$ in $\overline\cL$ originating from $(0,0)$ and containing only open sites,
by the previous discussion, we get that $\widehat C^0$ is infinite.
Therefore
\[
\P[\#\widehat C^0=\infty] \ge \P[\text{there exists an infinite open path from }(0,0)].
\]
By Lemmas \ref{l:mesurabilite}, \ref{l:comparaisoninitiale} and \ref{l:comparaisonsuite} we get
\[
\P[\text{there exists an infinite open path from }(0,0)] \ge (\survival(\lambda)-5\epsilon)\theta(1-5\epsilon)
\]
where $\theta(1-5\epsilon)$ is the probability that there exists an infinite open path originating from $(1,0)$ in a Bernoulli site percolation
on $\cL$ with parameter $1-5\epsilon$.
Therefore
\[
\P[\#\widehat C^0=\infty] \ge (\survival(\lambda)-5\epsilon)\theta(1-5\epsilon).
\]
But $\theta(1-5\epsilon)$ tends to $1$ as $\epsilon$ tends to $0$ (this is stated as \eqref{e:percolimite}).
This proves Theorem \ref{t:penrose-convex-lower} in the case $\lambda>1$.
When $\lambda \le 1$, $S(\lambda)=0$ and the required result is straightforward.

\section{Proof of Theorem \ref{t:GM-convex}}
\label{s:preuve-GM-convexe}

\subsection{Framework and the clusters $\widehat C^0$ and $\widehat A^0$}
\label{s:framework-GM}

\paragraph{Framework.} Let $\beta>0, \rho >1, d \ge 1$ and $K \in \cK(d)$.
Set
\[
\lambda = \beta \kappa_c(\rho)^d.
\]
Let $\chi_1$ and $\chi_\rho$ be independent Poisson point processus on $\R^d$ with intensity $\lambda \d x$ and $\lambda \rho^{-d} \d x$.
Set $r(c)=1/2$ for $c \in \chi_1$ and $r(c)=\rho/2$ for $c \in \chi_\rho$.
Write
$
\chi = \chi_1 \cup \chi_\rho.
$
The process $\{ (c,r(c)), c \in \chi\}$ is a Poisson point process on $\R^d \times (0,+\infty)$ with intensity measure
$\d x \otimes\lambda  \nu_{d,\rho}(\d r)$.

Write
$
\chi^0 = \chi \cup \{0\}
$
and set $r(0)=\rho/2$.

\paragraph{The cluster $\widehat C^0$.} As in the constant radius case, we can define a relevant undirected graph structure on $\chi^0$ as follows.
We put an edge between $x$ and $y$ if the associated grains $x+r(x)K$ and $y+r(y)K$ touch each other, that is if 
\[
y \in x + \big(r(y)+r(x)\big) K.
\]
As in the constant radius case, $\widehat C^0=\widehat C^0(\beta,\rho,d,K)$ (we simplify the notations for the parameters) 
is the connected component of the graph $\chi^0$ which contains $0$.
We are interested in the probability that $\widehat C^0$ is unbounded (see \eqref{e:ahouicetruc}) and in
\[
\beta_c(\rho,d,K) = \inf \left\{\beta>0: \P\big[ \widehat C^0 \text{ is unbounded}\big]>0\right\}.
\]

\paragraph{The cluster $\widehat A^0$.} We define a new undirected graph on $\chi^0$. 
In this new graph, we put an edge between points $x$ and $y$ if $r(x) \neq r(y)$ and if the associated grains touch each other.
In other words, we put an edge  between points $x$ and $y$ if\footnote{Note that one of the two radii equals $1/2$ and the other equals $\rho/2$,
therefore the sum is always $(\rho+1)/2$.}
\[
r(x) \neq r(y) \text{ and } y \in  x + \frac{1+\rho}2 K.
\]
We denote by $\widehat A^0=\widehat A^0(\beta,\rho,d,K)$ the connected component of $0$ in this new graph.
Clearly,
\[
\widehat A^0 \subset \widehat C^0.
\]

\subsection{Two related \BRW}
\label{s:cluster-BRW-GM}

We will not formalize the algorithms of exploration of $\widehat C^0$ and $\widehat A^0$.
This can be done as in Section \ref{s:cluster-brw} to which we refer for more details.
We will give directly the relation with two \BRW\ and provide some intuition. 
Let $\beta>0,  \rho >1, d \ge 1$ and $K \in \cK(d)$.
 
Recall that $\widehat A^0$ is a subset of $\widehat C^0$ and that we are interested in whether $\widehat C^0$ is infinite or not.
Therefore, instead if investigating $\widehat C^0$ or $\widehat A^0$, we can investigate
\[
\frac{2}{1+\rho} \widehat C^0 \text{ or } \frac{2}{1+\rho} \widehat A^0.
\]
More specifically, the \BRW\ we will study will provide subsets of $\frac{2}{1+\rho} \widehat C^0$ or  $\frac{2}{1+\rho} \widehat A^0$.
This is a very tiny change but it will simplify formulas by removing numerous $\frac{1+\rho}2$ factors.

\subsubsection{A \BRW\ for the upper bound on percolation probability.}
\label{s:cluster-BRW-GM-M}

\paragraph{The \BRW.}
Consider for example a deterministic $1$-grain 
\[
x + \frac 1 2 K.
\]
The random $\rho$-grains which touches the previous grain are the set of grains centered at points of
\[
\left(x + \frac {1 + \rho}2 K\right) \cap \chi_\rho.
\]
Therefore, this is a Poisson random variable with parameter 
\[
\beta \kappa_c(\rho)^d  \rho^{-d} \left|\frac{1+\rho}2 K\right| = \beta \left(\frac{1}{\sqrt\rho}\right)^d
\]
as $|K|=1$ and by definition of $\kappa_c(\rho)$ (see \eqref{e:kappa_c}).
Similar results hold for the other cases.
We can therefore see that $\widehat C^0$ is set of points of some pruned two-type \BRW.
We refer to Section \ref{s:cluster-brw} for details on the pruning but mention one important difference below.
It is a two-type \BRW.
There are $1$-particles and $\rho$-particles.
It starts with one $\rho$-particle located at $0$.
The progeny are independent (between the two types) and Poisson distributed.
The matrix of mean is the matrix $M(\beta,\rho,d)$ defined in \eqref{e:M}.
The steps of the \BRW\ (which we will actually not use) are i.i.d.\ with uniform distribution 
on $K$ for type $1$ to type $1$ progeny, 
on $\rho K$  for type $\rho$ to type $\rho$ progeny, 
on $\frac{\rho +1 }2 K$ otherwise.

Let us conclude with the important difference in the pruning process. 
Recall the vocabulary introduced in \eqref{e:interference}.
Here we are revealing points of two independent Poisson point processes: $\chi_1$ and $\chi_\rho$.
Therefore there can only be interference 
between two $1$-particles (when we are revealing $\chi_\rho$) or 
between two $\rho$-particles (when are revealing $\chi_1$).
In other words, a child $y$ of $\rho$ particle $x$ can only be rejected because of interference with another $\rho$-particle $x'$
and a child $y$ of $1$ particle $x$ can only be rejected because of interference with another $1$-particle $x'$.

\paragraph{The plan.} The upper bound for the percolation probability 
will follow from a simple analysis of the survival probability of the underlying Galton-Watson process.

\subsubsection{A \BRW\ for the lower bound on percolation probability.}
\label{s:cluster-BRW-GM-M'}

\paragraph{The \BRW.}
Recall
\[
\frac 2{1+\rho}\widehat A^0\subset \frac 2{1+\rho}\widehat C^0.
\]
We will prove a lower bound on the probability that the set of the left is infinite.
This will give a lower bound on the probability that the set of the right is infinite.

The set $\frac 2{1+\rho}\widehat A^0$ is the set of positions of a pruned \BRW.
We refer to Section \ref{s:cluster-brw} for details on the pruning.
Here we only describe the \BRW.
It is a two-type \BRW.
There are $1$-particles and $\rho$-particles.
It starts with one $\rho$-particle located at $0$.
The progeny are independent (between the two types) and Poisson distributed.
The matrix of mean is
\begin{equation}\label{e:M'}
M'(\beta,\rho,d) = 
\beta 
\begin{pmatrix}
0 & \left(\frac{1}{\sqrt\rho}\right)^d \\
\left(\sqrt\rho\right)^d  & 0
\end{pmatrix}.
\end{equation}
The steps of the \BRW\ are i.i.d.\ with uniform distribution on $K$.
(Recall that we are interested in $\frac 2{1+\rho}\widehat A^0$.)
Such \BRW\ will be denoted by
\[
\overline\tau^{\beta,\rho,d,X_K}.
\]
As in Section \ref{s:cluster-BRW-GM-M}, there can only be interference 
between two $1$-particles (when we are revealing $\chi_\rho$) or 
between two $\rho$-particles (when are revealing $\chi_1$).

\subsection{A result on survival probability of Galton-Watson processes}
\label{s:GW-GM}

Let $\beta>0,  \rho >1$ and $d \ge 1$.
Recall the matrices $M(\beta,\rho,d)$ and $M'(\beta,\rho,d)$ defined by \eqref{e:M} and \eqref{e:M'}.
Let $\tau(\beta,\rho,d)$ be a two-type Galton-Watson process with independent Poisson progeny with mean matrix $M(\beta,\rho,d)$ 
and starting with one $\rho$-particle.
Denote by $S(\beta,\rho,d)$ its survival probability.
Let $\tau'(\beta,\rho,d)$ be a two-type Galton-Watson process with independent Poisson progeny with mean matrix $M'(\beta,\rho,d)$ 
and starting with one $\rho$-particle.
Denote by $S'(\beta,\rho,d)$ its survival probability.
Let $\tau''(\beta^2)$ be a one type Galton-Watson process with Poisson$(\beta^2)$ progeny starting with one particle.
As usual we denote by $S(\beta^2)$ its survival probability.

The aim of this section is to prove the following lemma.

\begin{lemma} \label{l:GW} Let $\beta>0$ and  $\rho>1$. 
There exists a sequence $(\epsilon_{GW}(d))_d$ of positive real numbers which tends to $0$ such that,
\[
\forall d \ge 1,
S(\beta^2) - \epsilon_{GW}(d) \le S'(\beta,\rho,d) \le S(\beta,\rho,d) \le S(\beta^2) + \epsilon_{GW}(d).
\]
\end{lemma}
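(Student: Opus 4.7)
The inequality $S'(\beta,\rho,d) \le S(\beta,\rho,d)$ is immediate from a coupling: since $M'(\beta,\rho,d) \le M(\beta,\rho,d)$ entrywise (the two matrices agree off the diagonal while the diagonal of $M'$ vanishes), one can realize $\tau'(\beta,\rho,d)$ as a subtree of $\tau(\beta,\rho,d)$ by thinning the extra same-type Poisson progeny, and extinction of the larger tree forces extinction of the smaller. It therefore remains to show that both $S'(\beta,\rho,d)$ and $S(\beta,\rho,d)$ converge to $S(\beta^2)$ with a rate depending only on $\beta$ and $\rho$.

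Set $a = \beta \kappa_c(\rho)^d$, $b = \beta(\sqrt{\rho})^{-d}$, $c = \beta(\sqrt{\rho})^d$, so that $bc = \beta^2$ and $a \to 0$ since $\kappa_c(\rho) < 1$. Let $(u_1, u_\rho)$ and $(u_1', u_\rho')$ denote the extinction probabilities of $\tau$ and $\tau'$ started from each particle type; these are the componentwise smallest solutions in $[0,1]^2$ of
\[
u_1 = e^{a(u_1-1)+b(u_\rho-1)}, \qquad u_\rho = e^{a(u_\rho-1)+c(u_1-1)},
\]
and of the same system with $a$ replaced by $0$. Eliminating $u_1'$ shows that $u_\rho'$ is the smallest fixed point in $[0,1]$ of $F_d(u) = \exp\bigl(c(e^{b(u-1)}-1)\bigr)$. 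The Taylor estimate $\sup_{u \in [0,1]} |c(e^{b(u-1)}-1) - \beta^2(u-1)| \le (cb^2/2)\,e^{b} = \beta^2 b e^b/2 \to 0$ yields uniform convergence $F_d \to F_\infty := \exp(\beta^2(\cdot - 1))$ on $[0,1]$. Each $F_d$ is convex and increasing with $F_d(1) = 1$ and $F_d'(1) = \beta^2$, so its smallest fixed point equals $1$ exactly when $\beta^2 \le 1$ (matching $S(\beta^2) = 0$) and lies in $(0,1)$ otherwise. In the supercritical case, $F_\infty$ crosses the diagonal transversally at $u^* = 1-S(\beta^2)$, and uniform convergence together with the intermediate value theorem confine the smallest fixed point of $F_d$ to any prescribed neighborhood of $u^*$ for $d$ large, yielding $|S'(\beta,\rho,d) - S(\beta^2)| \le \epsilon'(d) \to 0$.

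For $\tau$, set $v_1 = 1-u_1$ and $v_\rho = 1-u_\rho$, so that $v_1 = 1 - e^{-av_1-bv_\rho}$ and $v_\rho = 1 - e^{-av_\rho - cv_1}$. The bound $1-e^{-x} \le x$ first gives $v_1 \le bv_\rho/(1-a)$, whence $av_1 + bv_\rho \le bv_\rho/(1-a)$; combined with $x - x^2/2 \le 1 - e^{-x}$, this upgrades to $v_1 = (bv_\rho/(1-a))(1+\eta_d)$ with $|\eta_d| \le b/(2(1-a)^2) \to 0$ uniformly in $v_\rho \in [0,1]$. Substituting, $cv_1 = \beta^2 v_\rho (1+\eta_d)/(1-a)$, so the equation for $v_\rho$ becomes a uniform perturbation of $v = 1 - e^{-\beta^2 v}$, whose smallest solution in $[0,1]$ is $S(\beta^2)$. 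The same convexity and intermediate-value argument yields $|S(\beta,\rho,d) - S(\beta^2)| \le \epsilon''(d) \to 0$. Setting $\epsilon_{GW}(d) = \max(\epsilon'(d), \epsilon''(d))$ completes the proof. The delicate point is the continuity of the smallest fixed point at the threshold $\beta^2 = 1$, where $F_\infty$ is tangent to the diagonal at $u=1$; the identity $F_d'(1) = \beta^2$, valid for every $d$, permits a unified treatment of the sub- and supercritical regimes.
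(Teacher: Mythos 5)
Your proposal takes a genuinely different route from the paper. The paper's proof is probabilistic: it decomposes the statement into five elementary facts (Lemma~\ref{l:GW-inter}), the two substantive ones being a path-counting bound showing that $\tau$ rarely survives without $\tau'$ surviving for a while (Item~1), and a coupling showing the even-generation skeleton of $\tau'$ converges in distribution to $\tau''$ (Items~2 and~5). You instead write down the fixed-point equations for the extinction probabilities and perform a perturbative analysis; this is more analytic but arguably more direct, and it trades the combinatorial path-counting argument for Taylor estimates on the offspring generating functions. Your strategy is sound and could be turned into a complete proof, but two points need repair. First, the phrase ``whose smallest solution in $[0,1]$ is $S(\beta^2)$'' is wrong: after the substitution $v = 1-u$, the minimal solution of the $u$-system becomes the \emph{maximal} solution of the $v$-system, and $S(\beta^2)$ is the largest (equivalently the unique positive, when $\beta^2>1$) root of $v = 1 - e^{-\beta^2 v}$, not the smallest (which is always $0$). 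Second, the $\tau$ case is waved through with ``the same convexity and intermediate-value argument''; unlike the $\tau'$ case, you have not reduced it to a fixed point of a probability generating function, only to a fixed point of $v_\rho \mapsto 1 - e^{-\mu_d(v_\rho)\,v_\rho}$ where $\mu_d$ depends on $v_\rho$, so convexity in $v_\rho$ is not automatic and the argument for locating the largest fixed point near $S(\beta^2)$ needs to be spelled out. One clean way to handle both sub- and supercritical cases here is the elementary sequential-compactness argument: any accumulation point $v^*$ of $v_\rho(d)$ satisfies $v^* = 1 - e^{-\beta^2 v^*}$, and since $v_\rho(d)$ is the \emph{largest} solution one must also rule out accumulation at $0$ when $\beta^2>1$, which follows from the transversality you mention. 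Finally, your closing remark about the threshold $\beta^2 = 1$ slightly overstates the difficulty on the $\tau'$ side (there $F_d'(1) = \beta^2 \le 1$ forces $S'=0$ outright) but correctly identifies that the $\tau$ side is where the boundary case requires the quantitative estimate, since $S(\beta,\rho,d) > 0$ for all $d$ when $\beta = 1$.
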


This is a straightforward consequence of the following lemma. 
We keep the notations  given at the beginning of Section \ref{s:GW-GM}.

\begin{lemma} \label{l:GW-inter} Let $\beta>0$ and  $\rho>1$. 
\begin{enumerate}
\item For all $n \ge 1$, 
$\limsup_{d \to \infty} \P[\tau(\beta,\rho,d) \text{ survives}] - \P[\tau'(\beta,\rho,d) \text{ survives up to step } n] \le 0$.
\item For all $n \ge 1$, 
$\limsup_{d \to \infty} \P[\tau'(\beta,\rho,d) \text{ survives up to step }2n] \le \P[\tau''(\beta^2) \text{ survives up to step } n]$.
\item $\lim_{n \to \infty} \P[\tau''(\beta^2) \text{ survives up to step }n]=  \P[\tau''(\beta^2) \text{ survives}]$.
\item For all $d \ge 1$, $\P[\tau'(\beta,\rho,d) \text{ survives }] \le P[\tau(\beta,\rho,d) \text{ survives}]$.
\item $\liminf_{d \to \infty} \P[\tau'(\beta,\rho,d) \text{ survives}] \ge \P[\tau''(\beta^2) \text{ survives}]$.
\end{enumerate} 
\end{lemma}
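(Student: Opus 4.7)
}
The proof rests on two structural observations. First, the entrywise inequality $M'\le M$ yields a coupling with $\tau'\subset\tau$, giving (4) at once, and (3) is the standard continuity-from-above: $\{\tau''(\beta^2)\text{ survives up to }n\}$ is a decreasing sequence whose intersection is $\{\tau''(\beta^2)\text{ survives}\}$. Second, $M'(\beta,\rho,d)^2 = \beta^2 I$, so in $\tau'$ the $\rho$-particles at even generations form a single-type Galton-Watson process $(Z_k)_{k\ge 0}$ with $Z_0 = 1$ and offspring law the compound Poisson sum $\sum_{i=1}^N X_i$, where $N\sim\mathrm{Pois}(\beta\sqrt\rho^d)$ and the $X_i\sim\mathrm{Pois}(\beta\sqrt\rho^{-d})$ are iid. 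Its generating function
\[
f_d(z)=\exp\bigl(-\beta\sqrt\rho^d(1-e^{-\beta\sqrt\rho^{-d}(1-z)})\bigr)
\]
converges uniformly on $[0,1]$, via a Taylor expansion of the inner exponential, to the Pois$(\beta^2)$ g.f.\ $f(z)=e^{-\beta^2(1-z)}$. This uniform convergence is the principal analytic input.

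Part (2) is then immediate: since $\tau'$ alternates types, $\{\tau'\text{ survives up to }2n\}=\{Z_n\ge 1\}$, so $\P[\tau'\text{ surv up to }2n]=1-f_d^n(0)$ and $\P[\tau''(\beta^2)\text{ surv up to }n]=1-f^n(0)$; uniform convergence $f_d\to f$ then gives $f_d^n(0)\to f^n(0)$ for every fixed $n$ (in fact with equality in the limit). For part (5), we may assume $\beta^2>1$ (else $S(\beta^2)=0$); letting $q,q_d$ be the smallest roots of $f(z)=z$ and $f_d(z)=z$ in $[0,1]$, strict convexity of $f$ together with $f(0)>0$ and $f(1)=1$ gives $f(q+\epsilon)<q+\epsilon$ for every small $\epsilon>0$, and uniform convergence forces $f_d(q+\epsilon)<q+\epsilon$ eventually. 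Since $f_d$ is increasing, iterating from $q+\epsilon$ downward shows $q_d\le q+\epsilon$ for $d$ large, so $\limsup_d q_d\le q$.

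Part (1) is the main obstacle: the direct coupling $\tau'\subset\tau$ is too weak, since $\tau$'s diagonal children produce extra particles whose expected number is already of order $(2\rho/(1+\rho))^d$ at generation $2$, so $\P[\tau\text{ surv up to }n]$ can be much larger than $\P[\tau'\text{ surv up to }n]$. My plan is to bypass $\P[\tau\text{ surv up to }n]$ and instead split on whether $\tau$ has a $\rho$-particle at generation $2n$. Applying $e^{-x}\ge 1-x$ to the fixed-point equation for the extinction probability of $\tau$ from a type-$1$ root yields the bound $\P[\tau\text{ surv from a }1\text{-particle}]\le\beta\sqrt\rho^{-d}/(1-\beta\kappa_c^d)\to 0$; combining with a Markov bound on the expected number $(M^{2n})_{\rho,1}$ of $1$-particles of $\tau$ at generation $2n$, computable from $M=\beta\kappa_c^d I+M'$ and $M'^2=\beta^2 I$, gives $\P[\tau\text{ survives, no }\rho\text{-particle at gen }2n]=o_d(1)$. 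A parallel first-moment bound on $(M^{2n}-M'^{2n})_{\rho,\rho}=O_n(\kappa_c^{2d})$ together with Markov give $\P[\tau\text{ has }\rho\text{-particle at gen }2n]\le\P[\tau'\text{ has }\rho\text{-particle at gen }2n]+o_d(1)$. Combining,
\[
\P[\tau\text{ survives}]\le\P[\tau\text{ has }\rho\text{-particle at gen }2n]+o_d(1)\le\P[\tau'\text{ survives up to }2n]+o_d(1),
\]
which gives (1) after using monotonicity of $\P[\tau'\text{ surv up to }\cdot]$ in its argument to reduce arbitrary $n\ge 1$ to the case of even $n$.
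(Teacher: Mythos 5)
Your proposal is correct for all five items, and it takes a genuinely different route from the paper, most notably for Item~1. Items 3 and 4 are the same. For Items 2 and 5 the underlying idea (the offspring law of the even-generation $\rho$-particle process, a compound Poisson with parameters $\mu=\beta\sqrt\rho^d$, $\mu^*=\beta\sqrt\rho^{-d}$, converges to Poisson$(\beta^2)$) matches the paper, but you work analytically with the generating function $f_d$ and its uniform convergence to $f$, whereas the paper uses characteristic functions and a Skorokhod-type almost-sure coupling for Item~2 and a Bernoulli-thinning lower bound (a Poisson$(\mu(1-e^{-\mu^*}))$ minorant) together with continuity of $S(\cdot)$ for Item~5; both give the same conclusions and your self-contained fixed-point argument for Item~5, namely $f_d^n(0)\le f_d^n(q+\epsilon)\downarrow q_d\le q+\epsilon$ once $f_d(q+\epsilon)<q+\epsilon$, is fine. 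The real divergence is in Item~1: the paper couples $\tau'\subset\tau$ and, on the event where $\tau$ survives but $\tau'$ dies before step $n$, extracts an infinite branch of $\tau$ containing at least one same-type step and bounds the expected number of such initial path segments by summing over type sequences; the antidiagonal entries $\sqrt\rho^{\pm d}$ cancel in pairs along closed paths, leaving an overall factor $O(\kappa_c^d)$. You avoid the path-type combinatorics entirely by (i) bounding $\P[\tau\text{ survives from a }1\text{-root}]\lesssim\beta\sqrt\rho^{-d}$ via $e^{-x}\ge1-x$ in the extinction fixed-point equation, (ii) combining this with a first moment on $(M^{2n})_{\rho,1}$ (computed from $M=\beta\kappa_c^d I+M'$ and $M'^2=\beta^2 I$) to rule out survival without a $\rho$-particle at generation $2n$, and (iii) controlling $(M^{2n}-M'^{2n})_{\rho,\rho}=O_n(\kappa_c^{2d})$ with a Markov bound on the coupled difference of the number of $\rho$-particles at generation $2n$; the reduction to even $n$ by monotonicity of $m\mapsto\P[\tau'\text{ survives to step }m]$ closes the argument. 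Both routes turn on the same two facts, the cancellation $\sqrt\rho^d\cdot\sqrt\rho^{-d}=1$ and $\kappa_c^d\to0$; yours is shorter once the moment identities are written down, while the paper's is more elementary in that it never invokes the extinction fixed-point inequality.
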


\begin{proof}[Proof of Lemma \ref{l:GW} using Lemma \ref{l:GW-inter}] 
Fix $\beta>0$ and  $\rho>1$.
Let $\epsilon>0$. 
By Item 3 we fix $n_0$ such that 
\[
\P[\tau''(\beta^2) \text{ survives up to step }n_0] \le   \P[\tau''(\beta^2) \text{ survives}] + \epsilon.
\]
Combining Item 1 (with $n=2n_0$) and Item 2 (with $n=n_0$), we get the existence of $d_2$ such that, 
\[
\forall d \ge d_2, \; \P[\tau(\beta,\rho,d) \text{ survives}] \le \P[\tau''(\beta^2) \text{ survives up to step } n_0] + \epsilon
\]
and then 
\begin{equation}\label{e:coursannule}
\forall d \ge d_2, \; \P[\tau(\beta,\rho,d) \text{ survives}] \le \P[\tau''(\beta^2) \text{ survives}] + 2\epsilon.
\end{equation}
By Item 5 of Lemma \ref{l:GW-inter}, there exists $d_1$ such that, 
\begin{equation}\label{e:coursannule2}
\forall d \ge d_1, \; \P[\tau''(\beta^2) \text{ survives }] \le P[\tau'(\beta,\rho,d) \text{ survives}] + \epsilon.
\end{equation}
The result follows from \eqref{e:coursannule2}, Item 4 of Lemma \ref{l:GW-inter} and \eqref{e:coursannule}.
\end{proof}

\begin{proof}[Proof of Item 1 of Lemma \ref{l:GW-inter}]
Let $\beta>0$ and $\rho>1$.
Hereafter, we assume that $d$ is large enough to ensure 
\begin{equation}\label{e:pasdepercodes1}
\beta \left(\frac{2\sqrt{\rho}}{1+\rho}\right)^d \le \frac 1 2.
\end{equation}
We couple in a natural way $\tau(\beta,\rho,d)$ and $\tau'(\beta,\rho,d)$.
If $\tau(\beta,\rho,d)$ survives, 
then there exists an infinite branch $0=x_0,x_1,x_2,\dots$ in $\tau(\beta,\rho,d)$.
If moreover $\tau'(\beta,\rho,d)$ does not survive at least up to step $n$, 
then there exists $k \in \{1,\dots,n\}$ such that $x_k$ and $x_{k-1}$ are two particles of the same type.
By \eqref{e:pasdepercodes1}, there exists no infinite branch of $1$-particles.
Therefore there exists $\ell \ge n$ such that $x_\ell$ is a $\rho$-particle.
Let $\ell$ be the smallest such integer.
The probability of the event
\[
\{\tau(\beta,\rho,d) \text{ survives}\} \setminus \{\tau'(\beta,\rho,d) \text{ survives up to step } n\}
\]
is thus bounded from above by the expected number of such paths $(x_0,\dots,x_\ell)$.

In order to bound this expected number of paths, we will associate a type with each such paths.
For each $k \in \{0,\dots,\ell\}$ we set $t_k=1$ if $x_k$ is a $1$-particle and $t_k=\rho$ otherwise.
We thus get a sequence $T=(t_0,\dots,t_\ell)$ of types.
This sequence belong to the set $\cT$ of finite sequence $(t'_0,\dots,t'_{\ell'})$ such that
\begin{enumerate}
\item $\ell' \ge n$.
\item $t'_0=t'_{\ell'}=\rho$. 
As a consequence, the sets 
$\{k \in \{1,\dots,\ell'\} : t_{k-1}=1 \text{ and } t_k=\rho\}$ and $\{k \in \{1,\dots,\ell'\} : t_{k-1}=\rho \text{ and } t_k=1\}$
have the same cardinality.
This is a crucial property which will cancel a large factor later in the proof.
\item There exists $k \in \{1,\dots,n\}$ such that $t_k =  t_{k-1}$.
\item If $\ell'>n$, then $t'_n=t'_{n+1}=\dots=t'_{\ell'-1}=1$. This is due to the fact that we stop, after step $n$, with the first $\rho$-particle. 
\end{enumerate}
Let us fix $T=(t_0,\dots,t_{\ell}) \in \cT$.
The expected number of paths of the tree $\tau(\beta,\rho,d)$ whose type is $T$ is $\beta^\ell\Delta^{I(T)}$ where
\[
\Delta=\left(\frac{2\sqrt{\rho}}{1+\rho}\right)^d<1
\]
and
\[
I(T) = \#\{k \in \{1,\dots,\ell\} : t_k=t_{k-1}\}.
\]
We have used the second property of types which yields some cancellations in antidiagonal coefficients of $M$.
By the third property and as $\ell \ge n$ we have $I(T) \ge 1$.
By the fourth one we have $I(T) \ge \ell-1-n$.
Therefore $I(T) \ge \max(1,\ell-1-n)$ and the expected number of paths of $\tau$ whose type is $T$ is at most 
\[
\beta^\ell\Delta^{\max(1,\ell	-1-n)}.
\]
Summing over types, we get  
\[
\P[\{\tau(\beta,\rho,d) \text{ survives}\} \setminus \{\tau'(\beta,\rho,d) \text{ survives at least up to step }n\}]  
\le \sum_{T \in \cT} \beta^\ell\Delta^{\max(1,\ell-1-n)}.
\]
The contribution of types of length $\ell=n$ is at most $2^n \beta^n \Delta$.
The contribution of types of length $\ell=n+1$ is at most $2^n \beta^{n+1} \Delta$.
The contribution of types of length $\ell \ge n+2$ is at most (we use $\beta \Delta \le 1/2$, see \eqref{e:pasdepercodes1}),
\[
2^n\sum_{\ell \ge n+2} \beta^\ell  \Delta^{\ell-1-n} = 2^n \beta^{n+1} \frac{\beta \Delta}{1-\beta \Delta} \le 2^{n+1}\beta^{n+1}\beta \Delta.
\]
Therefore
\[
\P[\{\tau(\beta,\rho,d) \text{ survives}\} \setminus \{\tau'(\beta,\rho,d) \text{ survives at least up to step }n\}] 
 \le 2^n \beta^n (1+\beta+2\beta^2)\left(\frac{2\sqrt{\rho}}{1+\rho}\right)^d
\]
and then
\[
\P[\tau(\beta,\rho,d) \text{ survives}]\le  \P[\tau'(\beta,\rho,d) \text{ survives at least up to step }n] 
 + 2^n \beta^n (1+\beta+2\beta^2)\left(\frac{2\sqrt{\rho}}{1+\rho}\right)^d.
\]
This yields the result.
\end{proof}

\begin{proof}[Proof of Item 2 of Lemma \ref{l:GW-inter}]
Let $\beta>0$, $\rho > 1$ and $n_0 \ge 1$. Let $d \ge 1$.
Write
\begin{equation} \label{e:mumustar}
\mu=\beta \sqrt{\rho}^d \text{ and }\mu^* = \beta \frac 1 {\sqrt{\rho}^d}.
\end{equation}
Let $N(\mu)$ be a Poisson$(\mu)$ random variable and $N_1(\mu^*), N_2(\mu^*), ... $ be Poisson$(\mu^*)$ random variables.
Assume that these random variables are independent.
Set
\begin{equation}\label{e:xd}
X(\beta,\rho,d) = \sum_{i=1}^{N(\mu)} N_i(\mu^*).
\end{equation}

The process $(\tau'_{2n}(\beta,\rho,d))_n$ is a Galton-Watson process with progeny distributed as $X(\beta,\rho,d)$.
When $d$ converges to $\infty$, $X(\beta,\rho,d)$ converges in distribution to a Poisson$(\beta^2)$ random variable $N(\beta^2)$.
This can for example be shown by computing the characteristic functions: for all $t \in \R$,
\[
\E\big[e^{it X(\beta,\rho,d)}\big] = \exp\left[-\beta \sqrt{\rho}^d\left(1-\exp\left(- \beta \sqrt\rho^{-d}\left(1-e^{it}\right)\right)\right)\right]
\to_{d \to \infty}
\exp\big[-\beta^2\big(1-e^{it}\big)\big]=\E\big[e^{itN(\beta^2)}\big].
\]
Thus, there exists a sequence of random variables $\widetilde X(d)$, each of which as the same distribution of $X(\beta,\rho,d)$, 
and a random variable $\widetilde N(\beta^2)$ with Poisson$(\beta^2)$ distribution such that $\widetilde X(d)$ converges almost surely to $\widetilde N$.
Let us use such coupling for all variables defining our Galton-Watson processes.
We thus get a new version of $(\tau'_{2n}(\beta,\rho,d))_n$ -- which we denote by $(\widetilde\tau'_{2n}(\beta,\rho,d))_n$ --
and a new version of $\tau''(\beta^2)$ -- which we denote by $\widetilde\tau''(\beta^2)$ --
such that $(\widetilde\tau'_{2n}(\beta,\rho,d))_{n \le n_0}$ 
converges almost surely to $(\widetilde\tau''(\beta^2)_n)_{n \le n_0}$ when $d$ tends to $\infty$.
Therefore
\begin{align}
& \P[\tau'(\beta,\rho,d) \text{ survives up to step }2n]  \nonumber \\
 & = \P[(\tau'_{2n}(\beta,\rho,d))_n \text{ survives up to step }n] \nonumber \\
 & = \P[(\widetilde\tau'_{2n}(\beta,\rho,d))_n \text{ survives up to step }n] \text{ as they have the same distribution} \nonumber\\
 & \to \P[\widetilde\tau''(\beta^2) \text{ survives up to step }n]\text{ as } d \to \infty \text{ by the discussion above} \nonumber\\
 & = \P[\tau''(\beta^2) \text{ survives up to step }n] \text{ as they have the same distribution}. \nonumber
\end{align}
This yields the result.
\end{proof}

\begin{proof}[Proof of Item 3 of Lemma \ref{l:GW-inter}] This is straightforward as the sequence of events is non-increasing. 
\end{proof}

\begin{proof}[Proof of Item 4 of Lemma \ref{l:GW-inter}] This is straightforward by a natural coupling.
\end{proof}

\begin{proof}[Proof of Item 5 of Lemma \ref{l:GW-inter}]
Let $\beta>0$, $\rho>1$ and $d \ge 1$.
We will use some remarks and notations from the proof of Item 2, in particular \eqref{e:mumustar} and \eqref{e:xd}.
The process $(\tau'(\beta,\rho,d)_{2n})_n$ is a Galton-Watson process with progeny distributed as $X(\beta,\rho,d)$.
But
\[
X(\beta,\rho,d) \ge  \sum_{i=1}^{N(\mu)} \1_{N_i(\mu^*) \ge 1}
\]
which is Poisson$(\mu(1-\exp(-\mu^*)))$ distributed.
Therefore
\begin{align*}
\P[\tau'(\beta,\rho,d) \text{ survives}] 
 & = \P[(\tau'(\beta,\rho,d)_{2n})_n \text{ survives}] \\
 & = \P[\text{a Galton-Watson process with progeny distribued as }X(\beta,\rho,d)\text{ survives}] \\
 & \ge \P[\text{a Galton-Watson process with Poisson}(\mu(1-\exp(-\mu^*)))\text{ progeny survives}] \\
 & = S(\mu(1-\exp(-\mu^*)) \\
 & \to S(\beta^2) \text{ as } d \to \infty
\end{align*}
as $\mu(1-\exp(-\mu^*) \to \beta^2$ and as $S$ is continuous by Lemma \ref{l:continuity-psi}.
\end{proof}

\subsection{Proof of the upper bound on percolation probability}
\label{s:t-GM-convex-upper}

We use the framework of Section \ref{s:framework-GM} and the first \BRW\ defined in Section \ref{s:cluster-BRW-GM}.
The aim of this section is to prove the following result.
This is the easy part in the control of percolation probability and percolation threshold.

\begin{prop} \label{p:t-GM-convex-upper}
\begin{itemize}
\item Let $\beta>0$.
For any $\rho>1$, any $d \ge 1$ and any $K\in\cK(d)$,
\begin{equation}\label{e:upper-bound-perco-proba-GM}
\P\left[ \#\widehat C^0\big(\beta, \rho, d,K\big) = \infty\right] \le S(\beta,\rho,d).
\end{equation}
\item For any $\rho>1$, any $d \ge 1$ and any $K\in\cK(d)$,
\begin{equation}\label{e:lower-bound-beta_c-GM}
\beta_c(\rho,d,K) \ge \frac{1}{1+\left(\frac{2\sqrt \rho}{1+\rho}\right)^d}.
\end{equation}
\item Let $\beta>0$ and $\rho>1$. 
There exists a sequence $(\epsilon(d))_d$ of positive real numbers which tends to $0$ such that,
\[
\forall d \ge 1, \forall K\in\cK(d),
\P\left[ \#\widehat C^0\big(\beta,\rho,d,K\big) = \infty\right] \le S(\beta^2) + \epsilon(d).
\]
\item Let $\rho>1$. 
There exists a sequence $(\epsilon'(d))_d$ of positive real numbers which tends to $0$ such that,
\[
 \forall d \ge 1, \forall K\in\cK(d), \beta_c(\rho,d,K) \ge 1-\epsilon'(d).
\]
\end{itemize}
\end{prop}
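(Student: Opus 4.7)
The four items are all consequences of the branching random walk construction sketched in Section \ref{s:cluster-BRW-GM-M} together with the Galton--Watson comparison of Lemma \ref{l:GW}. I would organize them as follows.

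For the first item, I would repeat the argument used in Proposition \ref{p:penrose-convexe-upper}, now in the two-type setting. By the construction of Section \ref{s:cluster-BRW-GM-M}, the set $\widehat C^0(\beta,\rho,d,K)$ is realized (under a suitable coupling) as the set of positions of a pruned version of a two-type \BRW\ starting from one $\rho$-particle and with mean matrix $M(\beta,\rho,d)$. Since pruning only removes particles, $\#\widehat C^0$ is stochastically dominated by the total population of the underlying two-type Galton--Watson process $\tau(\beta,\rho,d)$. Hence
\[
\P[\#\widehat C^0(\beta,\rho,d,K)=\infty]\le \P[\tau(\beta,\rho,d)\text{ survives}]=S(\beta,\rho,d).
\]

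For the second item, I would combine the first item with a direct eigenvalue computation. Writing
\[
M(\beta,\rho,d)=\beta\begin{pmatrix}a & b\\ c & a\end{pmatrix},\qquad a=\left(\tfrac{2\sqrt\rho}{1+\rho}\right)^d,\ b=\sqrt\rho^{-d},\ c=\sqrt\rho^{d},
\]
one has $bc=1$, so the eigenvalues of $M$ are $\beta(a+1)$ and $\beta(a-1)$, the Perron eigenvalue being $\beta(1+a)$. Standard multi-type Galton--Watson theory gives $S(\beta,\rho,d)>0$ if and only if $\beta(1+a)>1$. Consequently $S(\beta,\rho,d)=0$ whenever $\beta\le 1/(1+a)$, and the first item then implies $\beta_c(\rho,d,K)\ge 1/\bigl(1+(2\sqrt\rho/(1+\rho))^d\bigr)$.

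The third item is an immediate combination of the first item and Lemma \ref{l:GW}: one takes $\epsilon(d)=\epsilon_{GW}(d)$ so that
\[
\P[\#\widehat C^0(\beta,\rho,d,K)=\infty]\le S(\beta,\rho,d)\le S(\beta^2)+\epsilon_{GW}(d),
\]
uniformly in $K\in\cK(d)$. Finally, the fourth item follows from the second: for $\rho>1$ one has $(2\sqrt\rho/(1+\rho))^d\to 0$, so setting $\epsilon'(d)=1-\bigl(1+(2\sqrt\rho/(1+\rho))^d\bigr)^{-1}=a/(1+a)$ gives a sequence tending to $0$ with $\beta_c(\rho,d,K)\ge 1-\epsilon'(d)$, uniformly in $K$.

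No step is a real obstacle; the only point to be careful with is the eigenvalue computation in the second item, where the cancellation $bc=1$ is what makes the Perron root exactly $\beta(1+a)$ and thereby produces the clean threshold $1/(1+a)$.
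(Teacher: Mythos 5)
Your proposal is correct and follows essentially the same route as the paper: stochastic domination of $\widehat C^0$ by the two-type Galton--Watson process with mean matrix $M(\beta,\rho,d)$ for Item 1, the Perron eigenvalue $\beta\bigl(1+(2\sqrt\rho/(1+\rho))^d\bigr)$ for Item 2, Lemma~\ref{l:GW} for Item 3, and the observation that $(2\sqrt\rho/(1+\rho))^d\to 0$ for Item 4. You even spell out the eigenvalue computation (using $bc=1$) and the explicit choice of $\epsilon'(d)$, which the paper leaves implicit; note that for Item 2 only the implication ``Perron root $\le 1 \Rightarrow S=0$'' is actually needed, not the full ``if and only if'' you state.
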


\begin{proof}
Let $\beta>0, \rho>0, d \ge 1$ and $K \in \cK(d)$.
Let $\overline\tau$ be the associated \BRW\ introduced in Section \ref{s:cluster-BRW-GM-M}.
In particular, the mean matrix of the associated two-type Galton-Watson process is $M(\beta,\rho,d)$ which is defined in \eqref{e:M}.
By the discussion of Section \ref{s:cluster-BRW-GM-M},
\[
\P\left[ \#\widehat C^0\big(\beta,\rho,d,K\big) = \infty\right] \le \P[\#\overline\tau=\infty]=S(\beta,\rho,d).
\]
This gives \eqref{e:upper-bound-perco-proba-GM}. 
The largest eigenvalue of $M(\beta,\rho,d)$ is 

\[
r = \beta\left[1 + \left(\frac{2\sqrt\rho}{1+\rho}\right)^d\right].
\]
Therefore $S(\beta,\rho,d)$ equals $0$ when $r \le 1$ (see \cite{Athreya-ney-book} page 186).
Combined with the upper bound on the percolation probability, this yields \eqref{e:lower-bound-beta_c-GM}.

Lemma  \ref{l:GW} and \eqref{e:upper-bound-perco-proba-GM} yield Item 3.
Item 4 is a straightforward consequence of \eqref{e:lower-bound-beta_c-GM}.
\end{proof}

\subsection{Proof of the lower bound on percolation probability} 
\label{s:t-GM-convex-lower}

This is the hard part in the control of percolation probability and percolation threshold.
Our aim is to prove the following theorem.

\begin{theorem} \label{t:GM-convex-lower}
\begin{itemize}
\item Let $\beta>0$ and $1<\rho<2$. 
There exists a sequence $(\epsilon(d))_d$ of positive real numbers which tends to $0$ such that,
\[
\forall d \ge 1, \forall K\in\cK(d), \;
\P\left[ \#\widehat C^0\big(\beta,\rho,d,K\big) = \infty\right] \ge S(\beta^2) - \epsilon(d).
\]
\item Let $1<\rho<2$.   
There exists a sequence $(\epsilon'(d))_d$ of positive real numbers which tends to $0$ such that,
\[
\forall d \ge 1, \forall K\in\cK(d), \; \beta_c(\rho,d,K) \le 1+\epsilon'(d).
\]
\end{itemize}
\end{theorem}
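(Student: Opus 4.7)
The proof parallels that of Theorem \ref{t:penrose-convex-lower} in Section \ref{s:preuve-penrose-convexe}, adapted to the two-type setting; as there, the case $\beta\le 1$ is trivial (since $\survival(\beta^2)=0$) and the second item of the theorem will follow from the first exactly as Theorem \ref{t:penrose-convex} was deduced from Theorem \ref{t:penrose-convex-lower}. The plan is to perform a Penrose-type renormalization at the scale of two generations of the BRW $\overline\tau^{\beta,\rho,d,X_K}$ of Section \ref{s:cluster-BRW-GM-M'}: its $\rho$-particles at even depths form a single-type \emph{compound} BRW whose per-step displacement is $X_K+X_K'$ and whose offspring distribution $X(\beta,\rho,d)$ converges to Poisson$(\beta^2)$, as computed in the proof of Item 2 of Lemma \ref{l:GW-inter}. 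The survival probability of this limiting Poisson$(\beta^2)$ Galton--Watson process is precisely $\survival(\beta^2)$, and Lemma \ref{l:GW} quantifies the convergence.

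Since $X_K+X_K'$ has centered, symmetric, log-concave density (Section \ref{s:log-concave}), Theorem \ref{t:Klartag-CLT} provides a linear map $L:\R^d\to\R^2$ such that $L(X_K+X_K')$ is close in total variation to the standard Gaussian $\cN$. Embed the oriented lattice $\overline\cL$ in $\R^d$ via $L^{-1}$ as in Section \ref{s:embedding}, and establish the analogue of Lemmas \ref{l:briqueG} and \ref{l:brique} for the compound BRW, with $\survival(\lambda)$ replaced by $\survival(\beta^2)$ (using the continuity from Lemma \ref{l:continuity-psi}): from any $m$ seed $\rho$-particles in a cell $A_L(i,j)$, after $k$ compound generations the compound BRW deposits at least $m$ $\rho$-particles in each of $A_L(i+1,j\pm 1)$ and contains at most $M$ $\rho$-particles in total, with probability at least $\survival(\beta^2)-\epsilon$ at the initial stage $(0,0)$ and at least $1-\epsilon$ thereafter.

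Since interference can only occur between particles of the same type (Section \ref{s:cluster-BRW-GM-M}), we introduce two families of good gaps in the rescaled coordinates $\tfrac{2}{1+\rho}\widehat A^0$:
\[
G_\rho=\bigl\{z\in\R^d:\P(z+X_K+X_K'\notin\tfrac{2\rho}{1+\rho}K)\ge 1-\eta\bigr\},\quad
G_1=\bigl\{z\in\R^d:\P(z+X_K+X_K'\notin\tfrac{2}{1+\rho}K)\ge 1-\eta\bigr\},
\]
corresponding to the interference shapes of a $\rho$-grain and of a $1$-grain computed across one compound step. The rearrangement inequality (Theorem \ref{t:riesz}) combined with Lemma \ref{l:somme-unif-boule} yields $\P[X_K+X_K'\in rK]\le\P[X_B+X_B'\in rB]\to 0$ as soon as $r<\sqrt 2$; under the hypothesis $\rho<2$ both $\tfrac{2}{1+\rho}$ and $\tfrac{2\rho}{1+\rho}$ lie strictly below $\sqrt 2$, so the analogue of Lemma \ref{l:good_gap} holds for each family. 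The site-by-site renormalization of Section \ref{s:cluster-perco-orientee} then runs mutatis mutandis (truncation on $\|L(\cdot)\|_2$, enforcement of the $G_\rho$ and $G_1$ good-gap constraints between each newly examined particle and the previously generated particles of the same type whose $L$-image lies within $2\Lambda$, combinatorial bookkeeping as in Section \ref{s:proba-briques}), yielding $\P[\#\widehat C^0=\infty]\ge(\survival(\beta^2)-c\epsilon)\theta(1-c\epsilon)$; letting $\epsilon\to 0$ and invoking \eqref{e:percolimite} gives the first item.

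The main obstacle is the control of the intermediate $1$-particles. A single $\rho$-particle has Poisson$(\beta\sqrt\rho^d)$ many $1$-children, a number that diverges with $d$, so a naive enumeration would break the bounded-population discipline of Section \ref{s:cluster-perco-orientee}. The resolution is a further over-pruning (Section \ref{s:cluster-brw-over}): at each compound step we first reveal the $\rho$-grandchildren (whose count is bounded by $M$ on the good event) and then only inspect the $1$-children that are their parents, leaving every remaining $1$-child undisclosed so that its region of interference is declared empty. This caps the number of visited $1$-particles per compound step at $M$, after which the combinatorial estimates of Section \ref{s:proba-briques} transfer intact to both types of interference; the hypothesis $\rho<2$ provides precisely the margin needed for the good-gap thresholds $\tfrac{2}{1+\rho}$ and $\tfrac{2\rho}{1+\rho}$ to remain effective against the amplification arising from the two-type structure.
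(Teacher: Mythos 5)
Your overall architecture — compound two-generation renormalization, the central limit embedding for $X_K+X_K'$, a block lemma à la Lemma~\ref{l:brique}, and a two-step good-gap/interference control — matches the paper's. But the crux of the problem, the treatment of the exponentially many intermediate $1$-particles, is where your argument breaks down, and the breakdown is fatal rather than cosmetic.

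You claim that one can ``first reveal the $\rho$-grandchildren\ldots and then only inspect the $1$-children that are their parents, leaving every remaining $1$-child undisclosed so that its region of interference is declared empty,'' thereby capping the visited $1$-particles at $M$. This over-pruning rule is not implementable. In the coupling between the BRW and the cluster (Sections~\ref{s:cluster-brw-basic}--\ref{s:cluster-brw-over}), the number of $\rho$-children of a $1$-particle $y$ becomes known only after revealing the Poisson process $\chi_\rho$ on a region around $V(y)$; and once a region is revealed, it \emph{is} an interference region, whether or not any $\rho$-point was found there. Deciding whether to reveal around $y$ based on the very information obtained by revealing is circular, and ``un-revealing'' an empty region would condition the process on being empty there, destroying the Poisson coupling. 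This is exactly why the paper does \emph{not} reduce the visited $1$-particles to $M$: it keeps all of the up to $M\sqrt\rho^d$ $1$-children per $\rho$-particle as potential interferers, constrains each of their interference regions to lie inside $V(\parent{\cdot})+H'(d,K,\eta)$, and compensates for the exponential count $M\sqrt\rho^d$ by introducing the sharper good-gap set $H(d,K,\eta)=\{z:\P[z+X_K\in K]\le(\sqrt2-\eta)^{-d}\}$, so that the interference term in case $x=x'$ is bounded by $M^2\sqrt\rho^{\,d}(\sqrt2-\eta)^{-d}$, which is small precisely because $\rho<2$.

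The clearest symptom of the gap is that your argument would not isolate $\rho<2$ as the threshold. If the number of visited $1$-particles per compound step really were at most $M$, the combinatorial factor would be $O(M^2)$ independently of $\rho$, each interference probability would be $O(\eta)$, and the whole argument would go through for \emph{any} $\rho>1$ --- contradicting the fact (from \cite{GM-grande-dimension-2}, cited in the paper) that the conclusion of Theorem~\ref{t:GM-convex} fails for $\rho>2$. Your alternative explanation via the sets $G_\rho,G_1$ also does not locate the threshold correctly: the requirements $\tfrac{2\rho}{1+\rho}<\sqrt2$ and $\tfrac{2}{1+\rho}<\sqrt2$ hold for all $\rho\in(1,1+\sqrt2)$, not just $\rho<2$. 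Moreover those sets are built from $X_K+X_K'$ and the grain-sum shapes $\tfrac{2\rho}{1+\rho}K,\tfrac{2}{1+\rho}K$, which are not the right objects: in $\tfrac{2}{1+\rho}\widehat A^0$ every step is uniform on $K$ and every revealed region is a (subset of a) translate of $K$, so the relevant quantities are $\P[z+X_K\in K]$ (one elementary step into one elementary region), not $\P[z+X_K+X_K'\in rK]$. The sets the paper actually introduces — $G$, $G'$, $H$, $H'$ in Section~\ref{s:good-gaps-GM} — encode three distinct phenomena (pairwise $\rho$-$\rho$ gaps, grandparent/grandchild gaps, and the exponential good-gap $H$), and it is the interplay of $H$ with the $M\sqrt\rho^{\,d}$ combinatorial factor that produces the threshold $\rho<2$.
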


\begin{proof}[Proof of Theorem \ref{t:GM-convex} using Theorem \ref{t:GM-convex-lower}]
Recall Section \ref{s:framework-GM} for notations. 
In particular, for all $\beta>0,\rho>1,d \ge 1$ and $K \in \cK(d)$, by \eqref{e:ahouicetruc} and by our choice of notations 
(which we simplified in Section \ref{s:preuve-GM-convexe}):
\[
\P[\#\widehat C^0(\beta,\rho,d,K) = \infty]= \P[C^0(\beta \kappa_c(\rho)^d, \nu_{d,\rho},d,K) \text{ is unbounded}].
\]
Item 1 of Theorem \ref{t:GM-convex-lower}, Item 1 of Proposition \ref{p:t-GM-convex-upper} and Lemma \ref{l:GW} 
yield Item 1 of Theorem \ref{t:GM-convex}.
Item 2 of Theorem \ref{t:GM-convex-lower} and Item 4 of Proposition \ref{p:t-GM-convex-upper} 
yield Item 2 of Theorem \ref{t:GM-convex}.
\end{proof}

\subsubsection{Good gaps}
\label{s:good-gaps-GM}

Let $d \ge 1$ and $K \in \cK(d)$.
As usual, denote by $X_K, X'_K, X''_K$ i.i.d.r.v.\ uniformly distributed on $K$.
Let $L$ any map provided by Theorem \ref{t:Klartag-CLT} for $X_K$ and define $\widehat L$ by $\widehat L = 2^{-1/2}L$.
Let $T$ be any map adapted to $X_K$ (see \eqref{e:adapted}).

For all $\eta>0$ smaller than $\sqrt{2}$ set
\begin{align*}
G(d,K,\eta) & = \{z \in \R^d : \P[z + X_K \not\in K] \ge 1-\eta\}, \\
H'(d,K,\eta) & = \{z : \|T(z)\|_2 d^{-1/2} < 3/2\}, \\
G'(d,K,\eta) & = \{z \in \R^d : \P[z+ X_K+X'_K \not\in H'(d,K,\eta)] \ge 1-\eta\}, \\
H(d,K,\eta) & = \{z \in \R^d  : \P[z + X_K \in K] \le (\sqrt{2}-\eta)^{-d} \}.
\end{align*}
The family of set $G$ was already introduced in Section \ref{s:good-gaps}   with a similar purpose.
The only property of $3/2$ we are interested in is that $\sqrt 2 < 3/2 < \sqrt 3$.
When the parameters are clear from the context, we will write $G, G'$ and so on.

\begin{lemma} \label{l:good-gap-GM} 
Let $\eta \in (0,\sqrt{2})$.
There exists a sequence $(\epsilon'_G(d,\eta))_d$ that tends to $0$ such that 
for all $d \ge 1$, all $K \in \cK(d)$, all $a \in \R^d$:
\begin{align*}
\P[a + X_K \in G(d,K,\eta)] & \ge 1-\epsilon'_G(d,\eta), \\
\P[X_K+X_K' \in H'(d,K,\eta)] & \ge 1-\epsilon'_G(d,\eta), \\
\P[a + X_K \in G'(d,K,\eta)] & \ge 1-\epsilon'_G(d,\eta), \\
\P[X_K+X_K' \in H(d,K,\eta)] & \ge 1-\epsilon'_G(d,\eta). \\
\end{align*}
\end{lemma}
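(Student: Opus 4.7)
The plan is to establish the four bounds separately and let $\epsilon'_G(d,\eta)$ dominate the maximum of the four resulting sequences. The first inequality is exactly Lemma~\ref{l:good_gap}: it yields $\P[a + X_K \not\in G(d,K,\eta)] \le \eta^{-1}\epsilon_G(d)$ uniformly in $a \in \R^d$ and $K \in \cK(d)$. The second bound is a direct application of Theorem~\ref{t:Klartag-concentration} with $\ell = 2$: since $\sqrt 2 < 3/2$, any choice of $\delta \in (0,3/2-\sqrt 2)$ forces $\P[\|T(X_K+X_K')\|_2/\sqrt d \ge 3/2] \le \delta$ as soon as $d$ is large enough in terms of $\delta$, and letting $\delta$ decrease to $0$ along growing $d$ produces a bound tending to $0$.

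For the third bound I would unfold the definition of $G'$ and apply a conditional Markov inequality:
\[
\P[a + X_K \not\in G'] = \P\bigl[\P[a + X_K + X_K' + X_K'' \in H' \mid X_K] > \eta\bigr] \le \eta^{-1}\,\P[a + X_K + X_K' + X_K'' \in H'].
\]
The density of $X_K + X_K' + X_K''$ is symmetric and log-concave, by symmetry of $K$ and stability of log-concavity under convolution (Section~\ref{s:log-concave}); moreover $H'$ is the preimage under the linear map $T$ of a Euclidean ball, hence a symmetric convex body in $\R^d$. Anderson's inequality then gives the translation bound $\P[a+X_K+X_K'+X_K'' \in H'] \le \P[X_K+X_K'+X_K'' \in H']$, and the right-hand side tends to $0$ by Theorem~\ref{t:Klartag-concentration} applied with $\ell = 3$, using the crucial gap $\sqrt 3 > 3/2$.

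The fourth bound is the main obstacle, since the set $H$ is defined through a density threshold whose control requires rearrangement. Set $f = \1_K * \1_K$; by symmetry of $K$ one has $\P[z + X_K \in K] = f(z)$, and $f$ is also the density of $X_K + X_K'$ on $\R^d$. With $t := (\sqrt 2 - \eta)^{-d}$,
\[
\P[X_K + X_K' \not\in H] = \int_{\{f > t\}} f(z)\,\d z.
\]
A Markov-type bound on the superlevel set, $t\,|\{f>t\}| \le \int f = 1$, yields $|\{f > t\}| \le (\sqrt 2 - \eta)^d$, so its symmetric decreasing rearrangement $\{f > t\}^*$ is contained in $(\sqrt 2 - \eta)B$. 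Applying Theorem~\ref{t:riesz} with $A_1 = A_2 = K$ (so $A_1^* = A_2^* = B$) and $A_3 = \{f > t\}$ then gives
\[
\int_{\{f > t\}} (\1_K * \1_K)(z)\,\d z \le \int_{\{f > t\}^*} (\1_B * \1_B)(z)\,\d z \le \P\bigl[X_B + X_B' \in (\sqrt 2 - \eta) B\bigr],
\]
which tends to $0$ by Lemma~\ref{l:somme-unif-boule}. Taking $\epsilon'_G(d,\eta)$ as the maximum of the four vanishing bounds produced above completes the proof.
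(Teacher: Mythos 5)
Your proof is correct and follows essentially the same approach as the paper's: Lemma~\ref{l:good_gap} for $G$, Theorem~\ref{t:Klartag-concentration} with $\ell=2$ for $H'$, Markov plus a translation bound plus Theorem~\ref{t:Klartag-concentration} with $\ell=3$ for $G'$, and a superlevel-set measure bound combined with Theorem~\ref{t:riesz} and Lemma~\ref{l:somme-unif-boule} for $H$. The only cosmetic difference is that you cite Anderson's inequality for the translation step in the $G'$ case, whereas the paper obtains the same fact directly from the symmetry-plus-log-concavity observation recorded in its Section~\ref{s:log-concave}.
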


\begin{proof}

We have to prove that four probabilities tends to $1$, uniformly in $K \in  \cK(d)$ and $a \in \R^d$, as $d$ tends to $\infty$.
This is the content of Lemma \ref{l:good_gap} for $\P[a + X_K \in G(d,K,\eta)]$.
By Theorem \ref{t:Klartag-concentration}, we have the following convergence in probability when $d$ tends to $\infty$.
\[
\frac {\|T(X_K+X_K')\|_2}{\sqrt d} \to \sqrt{2}  \text{ and }\frac{ \|T(X_K+X_K'+X_K'')\|_2}{\sqrt d} \to \sqrt{3}.
\]
The convergence are uniform in $K \in \cK(d)$.
The first convergences gives immediately the required result for $\P[X_K+X_K' \in H'(d,K,\eta)]$ as $\sqrt{2}<3/2$.
With some further work, the second convergence will give the required result for $\P[a + X_K \in G'(d,K,\eta)]$.

Indeed, for all $d \ge 1, a \in \R^d$ and $K \in \cK(d)$,
\begin{align*}
\P[a + X_K \not\in G']
 & = \P[ \P[a+X_K+X_K'+X_K'' \in H'| X_K] \ge \eta] \\
 & \le \eta^{-1} \P[a+X_K+X_K'+X_K'' \in H'] \text{ by Markov inequality} \\
 & = \eta^{-1} \int_{\R^d} \1_{H'}(a+s) \1_K * \1_K * \1_K (s) \d s \\
 & = \eta^{-1} \1_{H'}*\1_K * \1_K * \1_K(a) \text{ by symmetry of }K.
\end{align*}
But $H'$ and $K$ are convex and symmetric.
Therefore $\1_H'$ and $\1_K$ are log-concave and symmetric.
Hence, $\1_{H'}*\1_K * \1_K * \1_K$ is log-concave (see Section \ref{s:log-concave}) and symmetric.
As a consequence, 
\[
\1_{H'}*\1_K * \1_K * \1_K(a) \le \1_{H'}*\1_K * \1_K * \1_K(0).
\]
Therefore,
\begin{align*}
\P[a + X_K \not\in G']
 & \le \eta^{-1} \1_{H'}*\1_K * \1_K * \1_K(0) \\
 & = \eta^{-1} \P[X_K+X_K'+X_K'' \in H'] \text{ by the same arguments} \\
 & = \eta^{-1} \P[\|T(X_K+X_K'+X_K'')\|_2 d^{-1/2}< 3/2] \text{ by definition of } H'.
\end{align*}
With the above mentioned consequence of Theorem \ref{t:Klartag-concentration}, we then get the required result about $\P[a + X_K \in G'(d,K,\eta)]$.

Let us now consider $\P[X_K+X_K' \in H(d,K,\eta)]$.
For all $z \in \R^d$ we have, using symmetry of $K$,
\[
\P[z+X_K \in K] = \int_{\R^d} \1_K(z+x)\1_K(x) \d x = \1_K * \1_K(z).
\]
Therefore, 
\[
|\R^d \setminus H| = |\{z \in \R^d : \1_K * \1_K(z) \ge (\sqrt 2 - \eta)^{-d} \}| \le (\sqrt 2 - \eta)^d \int_{\R^d} \1_K * \1_K =  (\sqrt 2 - \eta)^d .
\]
But then, by Theorem \ref{t:riesz} (rearrangement inequality),
\begin{align*}
\P[X_K+X_K' \not\in H] 
 & = \int_{\R^d} \1_{\R^d \setminus H}(s)\1_K * \1_K(s) \d s \\
 & \le \int_{\R^d} \1_{(\sqrt 2-\eta)B}(s)\1_B * \1_B(s) \d s \\
 & = \P[\|X_B+X_B'\|_2 \le (\sqrt 2-\eta)]
\end{align*}
which tends to $0$ by Lemma \ref{l:somme-unif-boule}.
\end{proof}

\subsubsection{Embedding of a two-dimensional lattice in $\R^d$}
\label{s:embedding-GM}

Let $d \ge 1$ and $K \in \cK(d)$.
Let $L$ be any map given by Theorem \ref{t:Klartag-CLT} for $X_K$.
Define $\widehat L$ by $\widehat L(x)=2^{-1/2}L(x)$.
Let $\gaussienne$ and $\gaussienne'$ be two independent standard Gaussian random vector in $\R^2$.
Under an appropriate coupling,
\[
\P[L(X_K) \neq \gaussienne] \le \epsilon_{CLT}(d)
\]
where $\epsilon_{CLT}$ is the sequence which appears in the statement of Theorem \ref{t:Klartag-CLT}.
Therefore
\begin{equation}\label{e:application-CLT}
\P[\widehat L(X_K) \neq 2^{-1/2}\gaussienne] \le \epsilon_{CLT}(d).
\end{equation}
Under an appropriate coupling,
\[
\P[L(X_K+X'_K) \neq \gaussienne+\gaussienne'] \le 2\epsilon_{CLT}(d).
\]
As $2^{-1/2} (\gaussienne+\gaussienne')$ has the same distribution as $\gaussienne$ we then have, under a new coupling,
\begin{equation}\label{e:application-CLT-somme}
\P[\widehat L(X_K+X'_K) \neq \gaussienne] \le 2\epsilon_{CLT}(d).
\end{equation}

Recall the definition of $\overline\cL$ and $A(\cdot,\cdot)$ in Section \ref{s:embedding}.
For any $(i,j) \in \overline\cL$, we set
\[
A_{\widehat L}(i,j) = {\widehat L}^{-1}(A(i,j)).
\]
The sets $A_{\widehat{L}}(i,j)$ are pairwise disjoint.
Moreover $0$ belongs to $A_{\widehat{L}}(0,0)$.

As in the proof of Theorem \ref{t:penrose-convex}, we will use this embedding of $\overline\cL$ to compare
the cluster of the origin to a supercritical percolation process on $\overline\cL$.

\subsubsection{An estimate about \BRW}
\label{s:brw-convex-GM}

The aim of this section is to prove the following result.
This is a consequence of Lemma \ref{l:briqueG}.
Recall the notations about \BRW\ in Section \ref{s:brw}.
In particular, note that the underlying Galton-Watson process of $\overline\tau^{\beta,\rho,d,X_K;\cS}$ 
is a two-type Galton-Watson with matrix mean $M'(\beta,\rho,d)$ and not $M(\beta,\rho,d)$.

\begin{lemma} \label{l:brique-GM}
Let $\beta, \rho>1$ and $\epsilon>0$. 
There exists $m,d_0,k,M \ge 1$ with $k$ even such that, for all $d \ge d_0$ and all $K \in \cK(d)$,
the following properties hold where $L$ is any map given by Theorem \ref{t:Klartag-CLT} for $X_K$ and where $\widehat L=2^{-1/2}L$,
\begin{itemize}
\item For all $z \in A_{\widehat L}(0,0)$, 
\[
\P\left[\overline\tau^{\beta,\rho,d,X_K;\{z\}}_k (A_{\widehat L}(1,0)) \ge m 
\text{ and } \smallrho{\overline\tau^{\beta,\rho,d,X_K;\{z\}}_{\le k}}M\rho \right] \ge \survival(\beta^2)-\epsilon.
\]
\item For all $(i,j) \in \cL$ and all subset $\cS \subset A_{\widehat L}(i,j)$ of cardinality $m$,  
\begin{align*}
& \P\Big[\overline\tau^{\beta,\rho,d,X_K;\cS}_k (A_{\widehat L}(i+1,j+1)) \ge m 
\text{ and } \overline\tau^{\beta,\rho,d,X_K;\cS}_k (A_{\widehat L}(i+1,j-1)) \ge m  \\
& \text{ and } \smallrho{\overline\tau^{\beta,\rho,d,X_K;\cS}_{\le k}}M\rho \Big] \ge 1-\epsilon.
\end{align*}
\end{itemize}
\end{lemma}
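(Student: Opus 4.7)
The plan is to reduce Lemma \ref{l:brique-GM} to Lemma \ref{l:briqueG} by examining only the even generations of $\overline\tau^{\beta,\rho,d,X_K;\cS}$ and coupling the resulting process with the Gaussian \BRW\ $\overline\tau^{\beta^2,2,\gaussienne;\widehat L(\cS)}$ through the central limit theorem applied to $X_K+X_K'$.

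Apply Lemma \ref{l:briqueG} with $\lambda=\beta^2>1$ and error level $\epsilon/3$ to produce integers $m,k_0,M_0$, and set $k=2k_0$. The key observation is that the $\rho$-particles alive at even generations of $\overline\tau^{\beta,\rho,d,X_K}$ form a one-type \BRW: the number of $\rho$-grandchildren of a fixed $\rho$-particle is distributed as the variable $X(\beta,\rho,d)$ of \eqref{e:xd}, which converges in distribution to Poisson$(\beta^2)$ as $d\to\infty$ (as exploited in the proof of Item 2 of Lemma \ref{l:GW-inter}); the displacement from a $\rho$-particle to one of its $\rho$-grandchildren is distributed as $X_K+X_K'$, whose image by $\widehat L$ can be coupled with a standard Gaussian $\gaussienne$ so as to coincide with probability at least $1-2\epsilon_{CLT}(d)$, by \eqref{e:application-CLT-somme}.

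From these two ingredients one constructs a coupling under which, on an event of probability at least $1-\eta(d)$ with $\eta(d)\to 0$ uniformly in $K$, the image under $\widehat L$ of the $\rho$-particles of $\overline\tau^{\beta,\rho,d,X_K;\cS}_{\le k}$ coincides with the particles of $\overline\tau^{\beta^2,2,\gaussienne;\widehat L(\cS)}_{\le k_0}$, provided the total population stays below $M_0$. Exactly as in the proof of Lemma \ref{l:brique}, one truncates the exploration at $M_0$ particles so that the number of events where the coupling can fail is bounded uniformly; the failure probability is then at most $M_0$ times the sum of $2\epsilon_{CLT}(d)$ and a total-variation distance between $X(\beta,\rho,d)$ and Poisson$(\beta^2)$ which both vanish as $d\to\infty$. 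Transferring the two conclusions of Lemma \ref{l:briqueG} through this coupling yields the position estimates ($m$ particles in each target box $A_{\widehat L}(i+1,j\pm 1)$) and the bound $M_0$ on the number of $\rho$-particles.

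It remains to upgrade the population control to the full $\text{Small}_\rho(\cdot,M)$ event, which also requires that every $\rho$-particle has at most $M\sqrt\rho^d$ children. Set $M=\max(M_0,2\beta)$. Each $\rho$-particle has Poisson$(\beta\sqrt\rho^d)$ many children, so by a standard Chernoff bound the probability that this count exceeds $M\sqrt\rho^d$ decays exponentially in $\sqrt\rho^d$; a union bound over the at most $M_0$ $\rho$-particles then produces a quantity tending to $0$ as $d\to\infty$. The main delicate point will be the bookkeeping of the coupling, in particular guaranteeing that both error sources—the CLT discrepancy $\epsilon_{CLT}(d)$ and the distributional gap between $X(\beta,\rho,d)$ and Poisson$(\beta^2)$—vanish uniformly in $K\in\cK(d)$; this follows from the uniformity already present in Theorem \ref{t:Klartag-CLT} and from the fact that the law of $X(\beta,\rho,d)$ depends on $d$ alone (given $\beta,\rho$).
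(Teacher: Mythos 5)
Your plan is close in spirit to the paper's — both reduce to Lemma \ref{l:briqueG} by looking at even generations and coupling the resulting one-type process with a Gaussian \BRW\ via Theorem \ref{t:Klartag-CLT} applied to $X_K+X_K'$ — but there is a genuine gap in the central step, namely your identification of the even-generation $\rho$-particles with a one-type \BRW.

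Concretely: you claim that the $\rho$-particles of even generation ``form a one-type \BRW'' with progeny $X(\beta,\rho,d)$ and step $X_K+X_K'$. The marginal law of the displacement from a $\rho$-particle to a single grandchild is indeed that of $X_K+X_K'$, but two grandchildren descending from the \emph{same} intermediate $1$-particle share the first increment $X_K$: their displacements are not independent. The process you describe is therefore not a \BRW\ in the required sense, so the total-variation coupling with $\overline\tau^{\beta^2,2,\gaussienne;\widehat L(\cS)}$ does not exist as asserted — you cannot match, on an event of high probability, a cloud of i.i.d.\ Gaussian increments with a cloud that may contain correlated pairs. To rescue your approach you would need an additional argument: e.g., observe that since $\mu^*=\beta\sqrt\rho^{-d}\to 0$, the expected number of $1$-particles having $\ge 2$ children (among the $\le M_0\sqrt\rho^d$ revealed children of the $\le M_0$ $\rho$-particles) is $O(\sqrt\rho^d\cdot(\mu^*)^2)=O(\sqrt\rho^{-d})\to 0$, so with high probability no such siblings exist and independence is restored. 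This is not stated and cannot be read off from the ingredients you list.

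The paper sidesteps this entirely by a stochastic domination argument rather than a total-variation coupling: it prunes $\overline\tau^{\beta,\rho,d,X_K;\cS}$ by keeping only the \emph{first} child of each $1$-particle, which yields a genuine one-type \BRW\ with i.i.d.\ $X_K+X_K'$ steps and Poisson$\big(\mu(1-e^{-\mu^*})\big)$ progeny, and then applies Lemma \ref{l:briqueG} at the slightly lower level $\lambda=\beta^2-\eta$ which $\mu(1-e^{-\mu^*})$ dominates for $d$ large. This replaces your two-sided ``TV gap vanishes'' estimate by a clean one-sided monotone comparison, at the cost of the continuity of $\survival$ (Lemma \ref{l:continuity-psi}) to absorb the $\eta$ in the survival probability. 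Your final paragraph (Chernoff for the $M\sqrt\rho^d$ bound per $\rho$-particle) and your uniformity remarks are fine and essentially parallel the paper's argument for $\smallrho{\cdot}{M}{\rho}$.
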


\begin{proof} Let $\beta,\rho>1$ and $\epsilon>0$.
We can and will assume that $\epsilon$ small enough to ensure $\survival(\beta^2)-\epsilon>0$.
By Lemma \ref{l:continuity-psi}, we can choose $\eta>0$ such that
\[
\survival(\beta^2-\eta) \ge \survival(\beta^2)-\epsilon.
\]

Applying Lemma \ref{l:briqueG} with $\lambda=\beta^2-\eta$ and $\epsilon$ we get constants that we denote by $m_2,k_2,M_2$.
For all $d \ge 1$ write
\[
\mu=\beta \sqrt{\rho}^d \text{ and }\mu^* = \beta \frac 1 {\sqrt{\rho}^d}.
\]
Let $d_1$ be such that, for all $d \ge d_1$,
\[
\mu(1-\exp(-\mu^*)) \ge \beta^2-\eta.
\]
Let $d_2$ be such that, for all $d \ge d_2$, $\epsilon_{CLT}(d) \le \epsilon/M_2$ where $(\epsilon_{CLT}(d))_d$ 
is the sequence given by Theorem \ref{t:Klartag-CLT}.
Let $M_3$ be such that, for all $d \ge 1$, all $K \in \cK(d)$ and all $\cS \subset \R^d$ of cardinality at most $m_2$,
\begin{equation}\label{e:lasagne}
\P[\smallrho{\overline\tau^{\beta,\rho,d,X_K ; \cS}_{\le 2k_2}}{M_3}{\rho}] \ge 1-\epsilon.
\end{equation}
Let us check that this is possible.
Denote by $N_\rho$ the number of $\rho$-particles of $\overline\tau^{\beta,\rho,d,X_K ; \cS}_{\le 2k_2}$.
Denote by $N_1$ the number of $1$-particles of $\overline\tau^{\beta,\rho,d,X_K ; \cS}_{\le 2k_2}$.
Then, for any $M_3$,
\[
\P[N_\rho \ge M_3] \le \frac{\E[N_\rho]}{M_3} \le  m_2\frac{\sum_{i=0}^{k_2} (\mu\mu^*)^i}{M_3} = m_2\frac{\sum_{i=0}^{k_2} \beta^{2i}}{M_3}
\]
and
\[
\P[N_1 \ge M_3 \sqrt{\rho}^d] \le \frac{\E[N_1]}{M_3\sqrt{\rho}^d} \le  m_2\mu \frac{\sum_{i=0}^{k_2-1} (\mu^*\mu)^i}{M_3\sqrt{\rho}^d}
= m_2\beta \sqrt{\rho}^d  \frac{\sum_{i=0}^{k_2-1}  \beta^{2i}}{M_3\sqrt{\rho}^d}=m_2\beta  \frac{\sum_{i=0}^{k_2-1}  \beta^{2i}}{M_3}.
\]
We can thus fix $M_3$ as stated above.
 
Finally set $d_0=\max(d_1,d_2)$.

We now prove that the conclusion of the lemma holds with $m_2,d_0,2k_2,M_3$.
Let $d \ge d_0, K \in \cK(d)$.
Let $L$ by any map associated with $X_K$ by Theorem \ref{t:Klartag-CLT} and set $\widehat L = 2^{-1/2} L$.
Let $\cS$ be a finite subset of $\R^d$.

Consider the \BRW\ $\left(\overline\tau^{\beta,\rho,d,X_K ; \cS}_{2n}\right)_n$ where we sample at even steps.
Let us prove the following stochastic domination  between two \BRW\footnote{Note that we sample at even times on the left and at all times on the right.}:
\begin{equation}\label{e:stodominationbrw}
\left(\overline\tau^{\beta,\rho,d,X_K ; \cS}_{2n}\right)_n \ge \left(\overline\tau^{\beta^2 - \eta,X_K+X_K' ; \cS}_n\right)_n.
\end{equation}
Prune $\overline\tau^{\beta,\rho,d,X_K ; \cS}$ in the following way.
If a $1$-particle has strictly more that one children, keep the first one in Neveu ordering (see Section \ref{s:brw}) 
and remove all the other ones and their progeny.
Denote by $\overline\tau^-$ the new \BRW.
We have the following stochastic domination:
\[
\left(\overline\tau^{\beta,\rho,d,X_K ; \cS}_n\right)_n \ge \left(\overline\tau^-_n\right)_n.
\]
and then
\begin{equation}\label{e:stodominationbrwetape}
\left(\overline\tau^{\beta,\rho,d,X_K ; \cS}_{2n}\right)_n \ge \left(\overline\tau^-_{2n}\right)_n.
\end{equation}
Consider the \BRW\ on the right.
The steps are independent copies of $X_K+X_K'$.
The progeny is distributed as the sum of a Poisson($\mu$) number of independent Bernoulli$(1-\exp(-\mu^*))$ random variables.
Therefore, the progeny is Poisson$(\mu(1-\exp(-\mu^*))$ distributed.
As $d \ge d_0 \ge d_1$, the progeny stochastically dominates a Poisson$(\beta^2-\eta)$ random variable.
As a consequence $(\overline\tau^-_{2n})_n$ stochastically dominates the \BRW\ on the right of \eqref{e:stodominationbrw}.
With \eqref{e:stodominationbrwetape}, this yields \eqref{e:stodominationbrw}.

As $d \ge d_0 \ge d_2$ we have, under an appropriate coupling, 
\begin{equation}\label{e:aout}
\P[{\widehat{L}}(X_K+X'_K) \neq \gaussienne] \le 2\epsilon/M_2.
\end{equation}
Assume that $\cS \subset \R^d$ is as set of cardinality at most $m_2$.
Let $(i',j') \in \overline\cL$.
We have
\begin{align}
& \P\left[\overline\tau^{\beta^2 - \eta,\gaussienne ; {\widehat{L}}(\cS)}_{k_2}(A(i',j')) \ge m_2 
  \text{ and there are at most } M_2 \text{ particles in } \overline\tau^{\beta^2 - \eta,\gaussienne ; {\widehat{L}}(\cS)}_{\le k_2}\right] \nonumber \\
& \le  \P\left[\overline\tau^{\beta^2 - \eta,{\widehat{L}}(X_K+X_K') ; {\widehat{L}}(\cS)}_{k_2}(A(i',j')) \ge m_2 \right] + M_2\frac{2\epsilon}{M_2}
\nonumber \text{ by } \eqref{e:aout} \\
& = \P\left[\overline\tau^{\beta^2 - \eta,X_K+X_K' ; \cS}_{k_2}(A_{\widehat{L}}(i',j')) \ge m_2\right] + 2\epsilon \nonumber \\
& \le \P\left[\overline\tau^{\beta,\rho,d,X_K ; \cS }_{2k_2}(A_{\widehat{L}}(i',j')) \ge m_2 \right] + 2\epsilon	\text{ by } \eqref{e:stodominationbrw}.
\label{e:vicetroud}
\end{align}
Therefore,
\begin{align*}
&  \P\left[\overline\tau^{\beta,\rho,d,X_K ; \cS }_{2k_2}(A_{\widehat{L}}(i',j')) \ge m_2 
\text{ and } \smallrho{\overline\tau^{\beta,\rho,d,X_K ; \cS}_{\le 2k_2}}{M_3}{\rho} \right]   \\
& \ge \P\left[\overline\tau^{\beta,\rho,d,X_K ; \cS }_{2k_2}(A_{\widehat{L}}(i',j')) \ge m_2 \right]  - \epsilon  \text{ by }\eqref{e:lasagne}  \\
& \ge \P\left[\overline\tau^{\beta^2 - \eta,\gaussienne ; {\widehat{L}}(\cS)}_{k_2}(A(i',j')) \ge m_2 
  \text{ and there are at most } M_2 \text{ particles in } \overline\tau^{\beta^2 - \eta,\gaussienne ; {\widehat{L}}(\cS)}_{\le k_2}\right] 
  - 3\epsilon \text{ by } \eqref{e:vicetroud}.
\end{align*}

We can now conclude using the definition of $m_2,k_2,M_2$ (which come from Lemma \ref{l:briqueG} with $\lambda=\beta^2-\eta$ and $\epsilon$).
If $\cS=\{z\} \subset A_{\widehat L}(0,0)$ we get, with $(i',j')=(1,0)$ and the inequality obtained above,
\begin{align*}
& \P\Big[\overline\tau^{\beta,\rho,d,X_K ; \{z\}}_{2k_2}(A_{\widehat L}(1,0)) \ge m_2  
\text{ and } \smallrho{\overline\tau^{\beta,\rho,d,X_K ; \{z\}}_{\le 2k_2}}{M_3}{\rho} \Big]  \\
& \ge \P\left[\overline\tau^{\beta^2 - \eta,\gaussienne ; \{\widehat L (z)\}}_{k_2}(A(1,0)) \ge m_2 
  \text{ and there are at most } M_2 \text{ particles in } \overline\tau^{\beta^2 - \eta,\gaussienne ; \{\widehat{L}(z)\}}_{\le k_2}\right] -3\epsilon \\
& \ge \survival(\beta^2-\eta) - 4\epsilon  \quad\text{ by the choice of } m_2,k_2,M_2\\
& \ge \survival(\beta^2)-5\epsilon \quad \text{by the choice of }\eta.
\end{align*}
Let $(i,j) \in \cL$.
If $\cS$ is a subset of cardinality $m_2$ of $A_{\widehat L}(i,j)$ we get similarly, with $(i',j')=(i+1,j\pm 1)$,
\[
\P\Big[\overline\tau^{\beta,\rho,d,X_K ; \cS}_{2k_2}(A_{\widehat L}(i+1,j\pm 1)) \ge m_2 
\text{ and } \smallrho{\overline\tau^{\beta,\rho,d,X_K ; \cS}_{\le 2k_2}}{M_3}{\rho} \Big]  
\ge 1 - 4\epsilon.
\]
Therefore,
\begin{align*}
& \P\Big[\overline\tau^{\beta,\rho,d,X_K;\cS}_{2k_2} (A_{\widehat L}(i+1,j+1)) \ge m_2
\text{ and } \overline\tau^{\beta,\rho,d,X_K;\cS}_{2k_2} (A_{\widehat L}(i+1,j-1)) \ge m_2  \\
& \text{ and } \smallrho{\overline\tau^{\beta,\rho,d,X_K;\cS}_{\le 2k_2}}{M_3}\rho \Big] \ge 1-8\epsilon.
\end{align*}
Items 1 and 2 hold with the choice of parameters $m_2,d_0,2k_2,M_3$.
\end{proof}

\subsubsection{Plan and intuition.}
\label{s:plan-GM}

The aim of this section is to present in an informal way the plan of the proof.
This a refinement of the plan given in Section \ref{s:plan} in the constant radius case.
We assume that the reader is familiar with the content of Section \ref{s:plan}.

\paragraph{Setup.} Let $\beta>0$, $1<\rho<2$, $\epsilon>0$, $d \ge 1$ and $K \in \cK(d)$. 
As usual, we denote by $X_K$ and $X_K'$ two independent random variables with uniform distribution on $K$.
Recall the definition of $\widehat C^0=\widehat C^0(\beta,\rho,d,K)$ and $\widehat A^0=\widehat A^0(\beta,\rho,d,K)$ in Section \ref{s:framework-GM}.
Recall that $S(\beta^2)$ denotes the survival probability of a Poisson$(\beta^2)$ offspring Galton-Watson process.
The aim is to prove that the inequality
\[
\P[\#\widehat C^0(\beta,\rho,d,K)=\infty] \ge S(\beta^2)-\epsilon
\]
holds for any $d$ large enough, uniformly in $K \in \cK(d)$.
We actually prove that for any $d$ large enough, uniformly in $K \in \cK(d)$,
\[
\P[\#\widehat A^0(\beta,\rho,d,K)=\infty] \ge S(\beta^2)-\epsilon.
\]
This is a stronger result as $\widehat A^0$ is a subset of $\widehat C^0$.
The advantage is that the structure of $\widehat A^0$ is easier.
The idea of focusing on $\widehat A^0$ instead of $\widehat C^0$ was already used in \cite{GM-grande-dimension}.
However, it was used in a cruder way. Recall that the main result in \cite{GM-grande-dimension} is a logarithmic equivalent of the critical parameter
in the Euclidean case,
stated here as Theorem \ref{t:GM}.

The set $\widehat A^0$ can be built from the \BRW\ $\overline\tau^{\beta,\rho,d,X_K}$.
See Section \ref{s:brw} for notations on \BRW\ and Section \ref{s:cluster-BRW-GM-M'} 
for the construction of $\widehat A^0$ from $\overline\tau^{\beta,\rho,d,X_K}$.
Note in particular that, in $\overline\tau^{\beta,\rho,d,X_K}$, children of $\rho$-particles are $1$-particles and
children of $1-$particles are $\rho$-particles.
The basic intuition is that, up to an event whose probability vanishes when $d$ tends to $\infty$, 
$\widehat A^0$ is infinite when $\overline\tau^{\beta,\rho,d,X_K}$ is infinite.

\paragraph{The underlying Galton-Watson tree.}
The underlying Galton-Watson tree of the \BRW\ $\overline\tau^{\beta,\rho,d,X_K}$ does not depend on $K$ but it depends on $d$.
The number of children of a $\rho$ particles tends to $\infty$ when $d$ tends to $\infty$.
Indeed, this is a Poisson random variable with parameter $\beta \sqrt\rho^d$.
Thus, the Galton-Watson process underlying $\overline\tau^{\beta,\rho,d,X_K}$ degenerates when $d$ tends to $\infty$.
This is a major difference with respect to the constant radius case and this is why the non constant case is much more involved.
It is also for this reason that Theorem \ref{t:GM-convex} does not hold for $\rho>2$ 
(the explosion of the number of children of $\rho$-particles increases when $\rho$ increase ; 
see \cite{GM-grande-dimension-2} where a logarithmic equivalent
for the critical probability is given in the Euclidean case for every $\rho$ : the behavior is not given by the sole underlying Galton-Watson
process when $\rho>2$).
There is however a nice feature.
When $d$ tends to $\infty$, 
\begin{equation}\label{e:approx}
\left(\overline\tau^{\beta,\rho,d,X_K}_{2n}\right)_n
\approx
\left(\overline\tau^{\beta^2,d,X_K+X_K'}_n\right)_n.
\end{equation}
Note that the first \BRW\ is sampled at even times.
The second \BRW\ is similar to the \BRW\ studied in Section \ref{s:preuve-penrose-convexe}.
We simply replaced $\lambda$ by $\beta^2$ and $X_K$ by $X_K+X_K'$.
We can therefore use many results from Section \ref{s:preuve-penrose-convexe}.
The underlying Galton-Watson process of the second \BRW\ only depends on $\beta^2$. 
This is a key property.
More precisely this is a Poisson$(\beta^2)$ offspring Galton-Watson process.

The plan is to prove that, asymptotically when $d$ tends to $\infty$, if the second \BRW\ survives, which occurs with probability $S(\beta^2)$, 
then $\widehat A^0$ is infinite.

\paragraph{Control of the interference between \BRW.}
The basic plan is the same as in the constant radius case.
In particular, we use a two-step approach
(see Section \ref{s:plan}: make sure that the relevant gaps are good and then using this fact to control interference).
However, the number of $1$-particles of the \BRW\ explodes when $d$ tends to $\infty$ 
and many of them (and their progeny) have to be rejected because of interference.
But most of them have no progeny at all and the number of $\rho$ particles is well behaved when $d$ tends to $\infty$.
The idea is thus to focus as much as possible on $\rho$-particles.

We perform over-pruning (see Section \ref{s:cluster-brw-over}) in order to control rejection by interference.
This means that we reject particles and their progeny if they break one the following rules.
The constants $M$ and $\Lambda$ are large and the constant $\eta$ is small.
The map $\widehat L$ is associated with $X_K$. See Section \ref{s:embedding-GM}.
\begin{enumerate}
\item If $y$ is a children of $x$, then $\|\widehat L(V(y)-V(x))\|_2 \le \Lambda$.
(Recall that this means that we reject $y$ and its progeny if $\|\widehat L(V(y)-V(x))\|_2 > \Lambda$. Similar remarks apply below.)
\item There are no more than $M$ $\rho-$particles at each stage
and each $\rho$-particle has no more than $M \sqrt{\rho}^d$ children (which are $1$-particles).
\item If $x$ and $x'$ and two distinct $\rho$-particle, then
\[
\|{\widehat L}(V(x')-V(x))\|_2 \ge 4\troncature \text{ or } V(x')-V(x) \in G(d,K,\eta) \cap G'(d,K,\eta).
\]
Good gaps are defined in Section \ref{s:good-gaps-GM}.
\item If $z$ is a $\rho$-particle and if $x$ is his grandparent, then
\[
V(z)-V(x) \in H(d,K,\eta) \cap H'(d,K,\eta).
\]
Note that if $y$ is the parent of $z$, the above property implies that the interference region of $y$ is included in 
\[
V(x)+H'(d,K,\eta).
\]
\end{enumerate}
Note that Properties 3 and 4 depends only on $\rho$-particles which are not too numerous.
This is why Lemma \ref{l:good-gap-GM} enables us to prove that the extra-pruning required to get these properties is harmless.
However these properties freely give some further properties on $1$-particles (see for example the comment after the statement of Property 4).

Thanks to the above properties, the probability of interference will be small.
Let us explain the general ideas.
Recall that there can be interference only between a $1$-particle and a $\rho$-particle
and that we want to avoid rejection of $\rho$-particles (and thus of $1$-particles with at least one child).

\begin{itemize}
\item Rejection of a $1$-particle $y$. 
Denote by $x$ its parent. This is a $\rho$-particle. Let $x'$ be another $\rho$-particle.
Let us consider rejection of $y$ because of interference with $x'$.
\begin{itemize}
\item If 
$\|{\widehat L}(V(x')-V(x))\|_2 \ge 2\troncature$,
then $x+K \cap \widehat L ^{-1}(\troncature D)$ and $x'+K \cap \widehat L ^{-1}(\troncature D)$ are disjoint.
But $y$ belongs to $x+K \cap \widehat L ^{-1}(\troncature D)$ and the interference region of $x'$ 
is included in $x'+K \cap \widehat L ^{-1}(\troncature D)$.
Therefore $y$ can not be rejected because of interference with $x'$.
\item Otherwise $V(x)-V(x') \in G(d,K,\eta)$. 
Write $V(y)-V(x')=(V(y)-V(x))+(V(x)-V(x'))$.
By definition of $G(d,K,\eta)$, we see that the probability, condition by everything but $V(y)-V(x)$,
that $y$ is rejected because of interference with $x'$ is at most $\eta$.
\end{itemize}
\item Rejection of a $\rho$-particle $z$. Let $y$ be its parent (it	 is a $1$-particle) and $x$ its grandparent (it is a $\rho$-particle).
Let $x'$ be a $\rho$-particle different from $z$ but which can be $x$.
We are wondering if $z$ can be rejected because of interference with a child $y'$ of $x'$.
The number of $1$-children of a $\rho$-individual diverges as $d$ tends to $\infty$.
Therefore we want, as much as possible, to avoid considering each child $y'$ of $x'$ individually.
In other words we want, as much as possible, to consider only $\rho$-particles.
\begin{itemize}
\item If $\|{\widehat L}(V(x')-V(x))\|_2 \ge 4\troncature$, 
then for any child $y'$ of $x'$, $\|{\widehat L}(V(y')-V(z))\|_2 \ge \troncature$ and therefore $z$ can not be rejected because of $y'$.
\item Otherwise and if $x$ and $x'$ are distinct, then $V(x)-V(x') \in G'(d,K,\eta)$.
Write $V(z)-V(x')=(V(z)-V(y))+(V(y)-V(x)) + (V(x)-V(x'))$.
Recall that the interference region of the $1$-particle $y'$ is included in $V(x')+H'(d,K,\eta)$.
Therefore, by definition of $G'(d,K,\eta)$, the probability, condition to everything but $(V(z)-V(y))$ and $(V(y)-V(x))$, 
that there exists a child $y'$ of $x'$ such that $z$ is rejected because of interference with $y'$ is at most $\eta$.
\item Otherwise, $x=x'$. This is the crucial case where the assumption $\rho<2$ is needed.
We have $V(z)-V(x) \in H(d,K,\eta)$ and thus $V(x)-V(z) \in H(d,K,\eta)$.
Therefore, by definition of $H(d,K,\eta)$, the probability, condition to everything but $V(y')-V(x')$, that a given child $y'$ of $x$ (with $y' \neq y$) 
is responsible of the rejection of $z$ is at most $(\sqrt{2}-\eta)^{-d}$.
But $x$ as no more than $M \sqrt\rho^d$ children. 
Therefore the probability that there exists a child $y'$ of $x'=x$ such that $z$ is rejected because of interference with $y'$ is 
at most $M\sqrt\rho^d(\sqrt{2}-\eta)^{-d}$ which is smaller than $\eta$ for $d$ large enough, 
provided that $\rho < \sqrt{2}-\eta$.
\end{itemize}

\end{itemize}

\subsubsection{Construction of a subset of $\frac 2 {1+\rho}\widehat A^0$ related to an oriented percolation on $\cL$}
\label{s:cluster-perco-orientee-GM}

\paragraph{Choice of parameters.} 
 Fix $\beta > 1$, $2>\rho>1$ and $\epsilon>0$. 
Fix $m,d_1,k,M\ge 1$ with $k$ even as provided by Lemma \ref{l:brique-GM} for the parameters $\beta, \rho$ and $\epsilon$.	
Let $\troncature \ge 1$ be such that $\P[\|\gaussienne\|_2>2^{1/2}\troncature] \le \epsilon/(2M)$.
Fix $\eta \in (0,\sqrt 2)$ such that 
\begin{equation}\label{e:eta-GM}
400k^2\troncature^2M^2 \eta \le \epsilon
\end{equation}
and
\begin{equation}\label{e:intermediaire}
\frac{\sqrt{\rho}}{\sqrt{2}-\eta}<1.
\end{equation}
Recall the definition of $\epsilon'_G(d,\eta)$ in Lemma \ref{l:good-gap-GM}.
Let $d_2$ be such that, for all $d \ge d_2$, 
\begin{equation}\label{e:conditionepsilonG-GM}
400k^2\troncature^2M^2 \epsilon'_G(d,\eta)  \le \epsilon
\end{equation}
and
\begin{equation}\label{e:shining}
M^2\sqrt{\rho}^d(\sqrt{2}-\eta)^{-d} \le \epsilon.
\end{equation}
Let $d_3$ be such that, for all $d \ge d_3$, for all $K \in \cK(d)$, for any $L$ associated with $X_K$ by Theorem \ref{t:Klartag-CLT},
$\P[L(X_K) \neq \gaussienne] \le \epsilon/(2M)$.
Then, with ${\widehat L} = 2^{-1/2}L$, 
\begin{align}
\P[\|{\widehat L} (X_K)\|_2 > \troncature] 
 & \le \P[L(X_K) \neq \gaussienne] + \P[\|\gaussienne\|_2>2^{1/2}\troncature] \nonumber \\
 & \le \epsilon/M. \label{e:troncature-GM}
\end{align}
Let $d_0=\max(d_1,d_2,d_3)$.

\paragraph{Setting and aim.} Let $d \ge d_0$, $K \in \cK(d)$.
As usual, we denote by $X_K$ and $X'_K$ independent random variable with uniform distribution on $K$.
Let $L$ be any map associated with $X_K$ by Theorem \ref{t:Klartag-CLT} for $X_K$.
Define $\widehat L$ by $\widehat L(x)=2^{-1/2}L(x)$ as above.
We aim at proving
\[
\P[\#\widehat A^0=\infty] \ge (\survival(\beta^2)-8\epsilon)\theta(1-\epsilon)
\]
where $\theta$ is defined in Section \ref{s:embedding}.
From this result and Proposition \ref{p:t-GM-convex-upper}, Theorem \ref{t:GM-convex-lower} will follow easily.

Thanks to the choice of parameters, the following properties hold 
(in particular the first ones are due to the choice of $m,d_1,k,M\ge 1$ using Lemma \ref{l:brique-GM}):
\begin{itemize}
\item For all $z \in \widehat A_L(0,0)$, 
\begin{equation}\label{e:applilbriqueGM1}
\P\left[\overline\tau^{\beta,\rho,d,X_K;\{z\}}_k (\widehat A_L (1,0)) \ge m 
\text{ and } \smallrho{\overline\tau^{\beta,\rho,d,X_K;\{z\}}_{\le k}}M\rho \right] \ge \survival(\beta^2)-\epsilon.
\end{equation}
\item For all $(i,j) \in \cL$ and all subset $\cS \subset \widehat A_L(i,j)$ of cardinality $m$,  
\begin{align}
& \P\Big[\overline\tau^{\beta,\rho,d,X_K;\cS}_k (\widehat A_L(i+1,j+1)) \ge m 
\text{ and } \overline\tau^{\beta,\rho,d,X_K;\cS}_k (\widehat A_L(i+1,j-1)) \ge m  \nonumber \\
& \text{ and } \smallrho{\overline\tau^{\beta,\rho,d,X_K;\cS}_{\le k}}M\rho \Big] \ge 1-\epsilon.  \label{e:applilbriqueGM2}
\end{align}
\item  \eqref{e:eta-GM}, \eqref{e:conditionepsilonG-GM}, \eqref{e:shining} and \eqref{e:troncature-GM}.
\end{itemize}

\paragraph{Randomness and $\sigma$-fields.} 
Let
\[
\big( \overline\tau^{i,j,n} )_{(i,j,n) \in \overline\cL \times \{1,\dots,m\}}
\]
be a family of independent copies of $\overline\tau^{\beta,\rho,d,X_K}$. 
Let $(\alpha^{i,j})_{(i,j) \in \overline\cL}$ be a family of i.i.d.\ Bernoulli random variables with parameter $1-\epsilon$.
For all $(i,j) \in \overline{\cL}$ we denote by $\cF_{i,j}$ (resp. $\cF_{i,j}^-$) the $\sigma$-field generated by 
the $\overline\tau^{i',j',n}$ and the $\alpha^{i',j'}$ for $(i',j',n) \in \overline\cL \times \{1,\dots,m\}$ such that $(i',j')$ is smaller 
(resp. strictly smaller) than $(i,j)$
for the lexicographic order.
Initially, the site $(0,0)$ is active and all other sites are inactive.

\paragraph{Site $(0,0)$.} 
This stage is slightly different and slightly less involved than the next stages.
The difference is similar to the difference in the corresponding construction given in Section \ref{s:preuve-penrose-convexe}.
Here, in order to avoid lengthy repetitions, 
we only give the construction for sites $(i,j) \in \cL$ and quickly explain by footnotes the modifications needed for the site $(0,0)$.
We hope that this is clear but, if this is not the case, we refer the reader to Section \ref{s:preuve-penrose-convexe}.

\paragraph{Stage $(i,j)$ for $(i,j) \in \cL$.} 
Recall that we now consider successively each $(i,j)\in\cL$ by lexicographic order.
If $(i,j)$ is inactive we decide, independently of everything else, that it is open with probability $1-\epsilon$ and closed otherwise. 
More precisely, we decide that $(i,j)$ is open when $\alpha^{i,j}=1$ and closed otherwise.

Thereafter, we consider the case where $(i,j)$ is active. 
The set $\cS(i,j)$ is well defined.
It's a subset of cardinal $m$ of $A_{\widehat L}(i,j)$.    
List the points of $\cS(i,j)$ in an arbitrary order: $\cS(i,j)=\{x^1,\dots,x^m\}$.
We consider the $m$ \BRW\ $\overline\tau^n=x^n+\overline\tau^{i,j,n}$ 
where $x^n+\overline\tau^{i,j,n}$ designates the \BRW\ $\overline\tau^{i,j,n}$ in which $x^n$ has been added to the position of all the particles.
We gather these $m$ \BRW\ into a single \BRW\ originating from $\cS(i,j)$.
We denote it by $\overline\tau^{\cS(i,j)}$
\footnote{In Stage $(0,0)$, $(0,0)$ is always active and we simply consider the \BRW\ $\overline\tau^{0,0,1}$ originating from $\{0\}$.}.
\begin{enumerate}
\item We examine successively the particles of generation between $1$
\footnote{In Stage $(0,0)$ we also examine the particle of generation $0$.} and $k$ of $\overline\tau^{\cS(i,j)}$ in any admissible order.
The last requirement means that:
\begin{itemize}
\item Children are examined after their parents.
\item Children of a given parent are examined in a row: 
once we start examining the children of one parent, we then examine all the children of this parent.
\end{itemize}
Some of the particles will be rejected.
We will reject more particles than necessary, thus performing some over-pruning as explained in Section \ref{s:cluster-BRW-GM}.
Some particles can be rejected before examination (if we have examined and rejected one of its ancestor).
If we have examined a particle and not rejected it, we say that the particle has been "generated".
If we are examining a particle $x$ and talk about a particle $y$ generated before $x$, it means that one of the following two properties occur:
\begin{itemize}
\item $y$ has been examined at stage $(i,j)$ strictly before $x$ and $y$ has not been rejected.
\item $y$ has been examined at a previous stage $(i',j')$ (thus $(i',j')<(i,j)$ in lexicographical order) and $y$ has not been rejected
\footnote{This property never occurs if $(i,j)=0$.}.
\end{itemize}

We never examine the roots of this \BRW\ 
(note that the positions of the roots are also positions of particles of \BRW\ examined during one of the previous stages
\footnote{In Stage $(0,0)$ we examine the root, even if there is nothing to examine. This is just a convention which ensures that the root 
is generated.}).
We stop as soon as we have examined all the particles or as soon as the particle examined causes a stop by "Overpopulation"
(equivalently, if "Overpopulation" occurs, we reject all the remaining particles).
When examining a particle $x$, several things can occur:

\begin{enumerate}
\item Overpopulation. This occurs if $x$ is the $M$-th $\rho$-particle examined during this stage $(i,j)$
or if $x$ is the $M \sqrt\rho^d$-th children examined of a given $\rho$-particle.
If "Overpopulation" occurs, we reject $x$ and all the particles of $\overline\tau^{\cS(i,j)}$ which have not been examined yet.
This stops the examination.
\item Bad gap. This occurs if one of the following three conditions occurs:
\begin{itemize}
\item $x$ is a $\rho$-particle and there exists a $\rho$-particle $x'$ generated before $x$ such that: 
\[
\|{{\widehat L}}(V(x')-V(x))\|_2 < 4\troncature \text{ and } V(x)-V(x') \not\in G(d,K,\eta) \cap G'(d,K,\eta).
\]
\item $x$ is a $\rho$-particle and the grand parent $x'$ of $x$\footnote{In Stage $(0,0)$ the condition is empty for the root $0$.} is such that
\[
V(x)-V(x') \not\in H(d,K,\eta) \cap H'(d,K,\eta).
\]
\item The following condition holds\footnote{In Stage $(0,0)$ the condition is empty for the root $0$.}:
\[
\|{{\widehat L}}(V(x)-V(\parent{x}))\|_2 > \troncature.
\]
\end{itemize}
In this case we do {\em not} stop the examination.
However, we reject $x$ and all its progeny
\footnote{In Section \ref{s:preuve-penrose-convexe} we stopped the examination as soon as we saw a "Bad Gap".
Here we can not avoid some particles to be rejected because of "Bad-Gap".
However, as we will see, we can avoid (with high probability) rejection of $\rho$-particle because of "Bad Gap"
(note that we do note care about rejection of $1$-particles without children).
It will be sufficient.} and we do not consider rejection of $x$ by interference (see below).

This over-pruning will help us controlling the interference. 
Let us note a few consequences right now, as this is necessary to explain the paragraph "interference" below.
\begin{itemize}
\item If $x'$ and $y'$ are two generated particles such that $x'$ is the parent of $y'$, then $\|\widehat L(V(y')-V(x))\|_2 \le \troncature$.
\item The interference region (see \eqref{e:regino_interference}) of a generated particle $y'$ is included in 
\[
V(y') + K \cap {{\widehat L}}^{-1}(\troncature D)
\]
where $D$ is the unit disk of $\R^2$.
If moreover $y'$ is a $1$-particle with parent $x'$, then the interference region of $y'$ is included in
\begin{equation}\label{e:interference-region-1}
V(x') + H'(d,K,\eta).
\end{equation}
\end{itemize}
\item Interference.
It occurs if there exists a particle $y'$ generated before $x$, such that $y'$ is not the parent $\parent x$ of $x$ and such that
one of the following conditions occurs (see the remark on the interference region in the paragraph "Bad Gap"):
\begin{itemize} 
\item $x$ is a $1$-particle, $y'$ is a $\rho$-particle and
\[
V(x) \in V(y')+K \cap {{\widehat L}}^{-1}(\troncature D).
\]
\item $x$ is a $\rho$-particle, $y'$ is a $1$-particle and
\[
V(x) \in V(y')+K \cap {{\widehat L}}^{-1}(\troncature D) \text{ and } V(x) \in V(\parent{y'}) + H'(d,K,\eta).
\]
\end{itemize}
In this case we do {\em not} stop the examination.
However, we reject the particle $x$ and all its progeny
\footnote{This is again a difference with respect to Section 3. The reason is the same as for "Bad Gap".}.
\end{enumerate}

One defines the following  sets.

\begin{itemize}
\item The set $\cG(i,j)$ of positions of all particles examined and not rejected.
We call them the particles generated at stage $(i,j)$.
By an abuse of notation, we see $\cG(i,j)$ as inheriting the genealogical structure of the \BRW.
\begin{itemize}
\item In the coupling with the Boolean model, all the points of $\cG(i,j)$ belong to $\frac2{1+\rho}A^0$.
Indeed, none of them was rejected (see also the definition and properties of the seeds $\cS(\cdot,\cdot)$ below\footnote{In stage $(0,0)$,
the set of seeds is $\{0\}$ which belong to $\frac2{1+\rho}A^0$.}).
\item 
\begin{equation}\label{e:besoindelabelGM}
\text{The image by }\widehat L\text{ of }\cG(i,j)\text{ is included in }(i,j)+2k\troncature D.
\end{equation} 
This is due to the fact that the images by $\widehat L$ of the positions of the roots belong to $(i,j)+D$
(see definition of $\cS(\cdot,\cdot)$ below\footnote{In stage $(0,0)$, the root is located at $0$.}),
the fact that the image by ${\widehat L}$ of each step leading to a non rejected particle belongs to $\troncature D$, 
the fact that we only explored the first $k$ generations and the fact that $1+k\troncature \le 2k\troncature$.
\end{itemize}
\item The subset $\cG^\rho(i,j) \subset \cG(i,j)$ of positions of $\rho$-particles generated
and the subset $\cG^1(i,j) \subset \cG(i,j)$ of positions of $1$-particles generated.
\begin{itemize}
\item All the gap between two distinct points $x$ and $x'$ of
\[
\bigcup_{(i',j') \le (i,j)} \cG^\rho(i,j)
\]
satisfies
\begin{equation}\label{e:hoho-GM}
\|{\widehat L}(V(x))-{\widehat L}(V(x'))\|_2 \ge 4\troncature \text{ or } V(x) \in V(x')+G(d,K,\eta)\cap G'(d,K,\eta).
\end{equation}
\item If moreover $x'$ is the grand-parent of $x$, then
\begin{equation}\label{e:hoho2-GM}
V(x)-V(x') \in H(d,K,\eta) \cap H'(d,K,\eta).
\end{equation}
\item $\cG^\rho(i,j)$ contains at most $M$ points.
\item Any point in $\cG^\rho(i,j)$ has at most $M\sqrt{\rho}^d$ children in $\cG^1(i,j)$.
\end{itemize}
Indeed, none of them was rejected because of "Bad Gap" or "Overpopulation".
\item The set $\cG_k(i,j) \subset \cG^\rho(i,j)$ of positions of generated particles of generation $k$.
Note that since $k$ is even and since we started with $\rho$-particles, the particles of $\cG_k(i,j)$ are $\rho$-particles.
\end{itemize}
\item If
\begin{equation} \label{e:okpassuivant-GM}
\#\big(\cG_k(i,j) \cap A_{\widehat L}(i+1,j+1)\big) \ge m \text{ and } \#\big(\cG_k(i,j)	 \cap A_{\widehat L}(i+1,j-1)\big) \ge m
\end{equation}
then\footnote{In Stage $(0,0)$, the condition is
\begin{equation} \label{e:okpas-GM}
\#\big(\cG_k(0,0) \cap A_{\widehat L}(1,0)\big) \ge m.
\end{equation}.}
:
\begin{itemize}
\item We say that the site $(i,j)$ is open.
\item If the site $(i+1,j+1)$ is inactive then we say that it is henceforth active and we define $\cS(i+1,j+1)$
as the $m$ first particles of $\cG_k(i,j) \cap A_{\widehat L}(i+1,j+1)$ in an arbitrary order.
In the coupling with the Boolean model, all the points of $\cS(i+1,j+1)$ belong to $\frac2{1+\rho}\widehat A^0$
\footnote{In Stage $(0,0)$, we say that the site $(1,0)$ is active and we define $\cS(1,0)$
as the $m$ first particles of $\cG_k(0,0) \cap A_{\widehat L}(1,0)$ in an arbitrary order.
In the coupling with the Boolean model, all the points of $\cS(1,0)$ belong to $\frac2{1+\rho}\widehat A^0$.}.
\item If the site $(i+1,j-1)$ is inactive then we say that it is henceforth active and we define $\cS(i+1,j-1)$
as the $m$ first particles of $\cG_k(i,j) \cap A_{\widehat L}(i+1,j-1)$ in an arbitrary order.
In the coupling with the Boolean model, all the points of $\cS(i+1,j-1)$ belong to $\frac2{1+\rho}\widehat A^0$
\footnote{In Stage $(0,0)$ this item does not exist.}.
\end{itemize}
Otherwise, we say that the site $(i,j)$ is closed.
\end{enumerate}

\subsubsection{Bounds on conditional probabilities}
\label{s:proba-briques-GM}

The aim of this section is to prove the following lemmas.
Parameters have been fixed in Section \ref{s:cluster-perco-orientee-GM}.

\begin{lemma} \label{l:mesurabilite-GM}
For all $(i,j) \in \overline{\cL}$, 
$
\{ (i,j) \text{ active}\} \in \cF(i,j)^- \text{ and } \{ (i,j) \text{ open}\}\in \cF(i,j).
$
\end{lemma}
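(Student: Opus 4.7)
The plan is to argue by induction on $\overline{\cL}$ equipped with the lexicographic order used throughout Section \ref{s:cluster-perco-orientee-GM}. The claim is essentially a bookkeeping statement about the construction, so I will simply read off from the definitions which random objects enter the events in question and check that they have been revealed at the appropriate stage.

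First I would handle the base case $(i,j)=(0,0)$. By the initialization rule, $(0,0)$ is active with probability one, hence $\{(0,0)\text{ active}\}$ is trivially in $\cF(0,0)^-$. Whether $(0,0)$ is open is decided by the examination of $\overline\tau^{0,0,1}$ up to generation $k$, together with the criterion \eqref{e:okpas-GM}, and all of these depend only on $\overline\tau^{0,0,1}$, which is $\cF(0,0)$-measurable by definition.

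For the inductive step, fix $(i,j)\in\cL$ and assume the claim has been proved for every $(i',j')<(i,j)$. By construction, activation of $(i,j)$ occurs only when some previously processed open site $(i',j')\to(i,j)$ triggers the definition of $\cS(i,j)$. The event that a given earlier site is open is, by the inductive hypothesis, in $\cF(i',j')\subset\cF(i,j)^-$, and the seed set $\cS(i,j)$ is then read off from $\cG_k(i',j')$, which is a measurable function of $\overline\tau^{i',j',1},\ldots,\overline\tau^{i',j',m}$ and of the bad-gap/interference decisions made at previous stages; the latter are themselves measurable with respect to the $\overline\tau^{i'',j'',n}$ for $(i'',j'')<(i,j)$ by a straightforward secondary induction. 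Hence $\{(i,j)\text{ active}\}\in\cF(i,j)^-$, and on this event $\cS(i,j)$ is $\cF(i,j)^-$-measurable as well.

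It remains to check openness. On the complement $\{(i,j)\text{ inactive}\}\in\cF(i,j)^-$, the site is open iff $\alpha^{i,j}=1$, which lies in $\cF(i,j)$. On $\{(i,j)\text{ active}\}$, openness is decided by running the examination procedure on $\overline\tau^{\cS(i,j)}$, itself a deterministic function of $\cS(i,j)$ and of $\overline\tau^{i,j,1},\ldots,\overline\tau^{i,j,m}$; the three stopping/rejection rules (Overpopulation, Bad Gap, Interference) only refer to positions of particles already generated at stage $(i,j)$ or at previous stages, and the criterion \eqref{e:okpassuivant-GM} is a measurable function of $\cG_k(i,j)$. All of these inputs lie in $\cF(i,j)$, so $\{(i,j)\text{ open}\}\in\cF(i,j)$. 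I do not anticipate any real obstacle here; the only point requiring a little care is tracking that the "generated before $x$" predicate used in the Bad Gap and Interference tests involves particles from strictly earlier stages, so that no circular dependence on $\cF(i,j)\setminus\cF(i,j)^-$ sneaks into the activation event.
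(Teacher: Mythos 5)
Your proof is correct and takes the same approach the paper has in mind: the paper simply states ``This is straightforward by construction'' without elaboration, and your write-up is precisely the careful unpacking of that claim by induction along the lexicographic order, tracking which $\overline\tau^{i',j',n}$ and $\alpha^{i',j'}$ each decision depends on. No discrepancy to report.
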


\begin{lemma} \label{l:comparaisoninitiale-GM}
We have
$
\P[(0,0) \text{ open}] \ge \survival(\beta^2)-8\epsilon.
$
\end{lemma}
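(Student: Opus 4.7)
The plan is to follow closely the structure of the proof of Lemma~\ref{l:comparaisoninitiale}, but using the refined geometric tools of Section~\ref{s:good-gaps-GM} to handle the two-type setting. I would begin by writing
\[
\{(0,0) \text{ closed}\} \subset \Overpopulation \cup \bad \cup \rejection \cup \autre,
\]
with $\autre = \{\overline\tau^{\beta,\rho,d,X_K;\{0\}}_k(A_{\widehat L}(1,0)) < m\}$, and $\bad$, $\rejection$ denoting the events that some ancestor of a generation-$k$ $\rho$-particle in $A_{\widehat L}(1,0)$ is rejected by Bad Gap or by Interference, respectively. This decomposition is justified because, absent any rejection and any Overpopulation, every such $\rho$-particle is generated. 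The key combinatorial observation is that, on $\neg\Overpopulation$, there are at most $M$ generated $\rho$-particles and at most $M$ $1$-particles lying on the ancestral path to some $\rho$-particle, giving a pool of $\le 2M$ potentially harmful particles. The bound $\P[\Overpopulation \cup \autre] \le 1 - \survival(\beta^2) + \epsilon$ follows immediately from \eqref{e:applilbriqueGM1}.

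For $\P[\bad \cap \neg\Overpopulation]$, I would treat the three Bad Gap conditions separately by a union bound over the $\le 2M$ relevant particles. Condition~3 contributes at most $2M \cdot \epsilon/M = 2\epsilon$ thanks to \eqref{e:troncature-GM}. For Conditions~1 and~2, the displacement $V(x)-V(x')$ between a $\rho$-particle $x$ and a generated $\rho$-particle $x'$ (resp.\ its grandparent) splits off one (resp.\ two) fresh uniform jumps on $K$ conditionally on the ancestry, so applying Lemma~\ref{l:good-gap-GM} gives per-pair probabilities of order $\epsilon'_G(d,\eta)$, and summing over the $O(M^2)$ pairs is absorbed by \eqref{e:conditionepsilonG-GM}.

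Bounding $\P[\rejection \cap \neg\bad \cap \neg\Overpopulation]$ is the main obstacle and uses the two-step approach of Section~\ref{s:plan-GM}: the Bad Gap conditions guarantee that relevant $\rho$-gaps are good, and the last unused jump is then used to control the interference probability. For the rejection of a $1$-particle $x$ by a $\rho$-particle $y'$, I would write $V(x)-V(y') = (V(x)-V(\parent x)) + (V(\parent x)-V(y'))$ and split on Bad Gap condition~1 between the $\rho$-particles $\parent x$ and $y'$: the long-projection case forbids interference via truncated step sizes, while in the good-gap case $V(\parent x)-V(y') \in G$, and conditioning on everything but the uniform jump $V(x)-V(\parent x)$ yields an interference probability of at most $\eta$ per pair. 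Summing gives $M^2\eta \le \epsilon$ by \eqref{e:eta-GM}.

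The delicate case is the rejection of a $\rho$-particle $x$ by a $1$-particle $y'$, which I would classify according to whether $x' := \parent{y'}$ coincides with the $\rho$-grandparent $x_\rho^{gp}$ of $x$. When $x' \ne x_\rho^{gp}$, I would use the additional interference requirement $V(x)\in V(x')+H'$ combined with Bad Gap condition~1 between $x'$ and $x_\rho^{gp}$: either they are $4\troncature$-separated (so, thanks to step truncation, interference is impossible) or $V(x_\rho^{gp})-V(x') \in G'$, and writing $V(x)-V(x') = (V(x)-V(x_\rho^{gp})) + (V(x_\rho^{gp})-V(x'))$ with $V(x)-V(x_\rho^{gp})$ distributed as $X_K+X_K'$, the definition of $G'$ gives a per-pair bound of $\eta$, totaling $M^2\eta \le \epsilon$. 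The truly hard sub-case is $x' = x_\rho^{gp}$, where $y'$ is a $1$-sibling of $\parent x$; here Bad Gap condition~2 yields $z := V(x) - V(x_\rho^{gp}) \in H$, and the interference condition then forces the independent uniform jump $V(y')-V(x_\rho^{gp})$ to lie in $z+K$, which has conditional probability at most $(\sqrt{2}-\eta)^{-d}$ by the definition of $H$. Summing over the at most $M\sqrt{\rho}^d$ such siblings (using $\neg\Overpopulation$) and the at most $M$ choices of $x$ gives a total of $M^2\sqrt{\rho}^d(\sqrt{2}-\eta)^{-d}$, controlled by \eqref{e:shining}; it is here that the restriction $\rho<2$ enters, via \eqref{e:intermediaire}. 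Combining all contributions yields $\P[(0,0)\text{ open}] \ge \survival(\beta^2) - 8\epsilon$.
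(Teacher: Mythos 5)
Your proposal is correct and follows essentially the same route as the paper. The only surface difference is bookkeeping: the paper folds the event of too many $\rho$-particles or of too many $1$-children into $\smallrho{\cdot}{M}{\rho}^c$ and intersects the Bad Gap and Interference events with $\smallrho{\cdot}{M}{\rho}$, whereas you isolate $\Overpopulation$ directly; these carry the same bound via \eqref{e:applilbriqueGM1}. Your treatment of the three Bad Gap conditions, the two Interference cases, the further split $x'=x^{gp}_\rho$ versus $x'\neq x^{gp}_\rho$, and the uses of $G$, $G'$, $H$, $H'$ with the conditioning on the fresh uniform jump(s) coincide with the paper's proof (which the paper gives only implicitly, as the footnoted simplification of the proof of Lemma \ref{l:comparaisonsuite-GM}), and the accounting $2\epsilon + \epsilon + \epsilon + \epsilon + \epsilon + \epsilon + (1-\survival(\beta^2)+\epsilon)$ correctly yields $\survival(\beta^2)-8\epsilon$.
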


\begin{lemma} \label{l:comparaisonsuite-GM}
For all $(i,j) \in \cL$,
$
\P[(i,j) \text{ open }| \cF^-(i,j)] \ge 1-8\epsilon.
$
\end{lemma}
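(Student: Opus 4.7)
The proof will follow the template of Lemma \ref{l:comparaisonsuite}, combining the ideas already used in the constant-radius case with a careful refinement for the two-type structure. On the event $\{(i,j)\text{ inactive}\}$ the site is open with probability exactly $1-\epsilon$ by the independent Bernoulli variable $\alpha^{i,j}$, so the bound is immediate. On $\{(i,j)\text{ active}\}$ I will decompose
\[
\{(i,j)\text{ closed}\}\cap\{(i,j)\text{ active}\}\subset \Overpopulation\cup \text{Bad}_a\cup \text{Bad}_b\cup \text{Bad}_c\cup \text{Int}_1\cup \text{Int}_\rho\cup \autre,
\]
where $\autre$ denotes the event that the unpruned BRW $\overline\tau^{\cS(i,j)}$ fails the target condition \eqref{e:okpassuivant-GM}, and the $\text{Bad}$ and $\text{Int}$ events are the restrictions of rejection to ancestor edges of some $\rho$-particle of generation $k$. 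By \eqref{e:applilbriqueGM2} we have $\P[\Overpopulation\cup\autre\mid\cF^-(i,j)]\le\epsilon$, and the goal is to show that each of the six remaining events has conditional probability at most $\epsilon$ (or $2\epsilon$ for $\text{Bad}_c$) on $\text{Small}_\rho$, for a total of $8\epsilon$.

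The key observation that unlocks the whole argument is that on $\text{Small}_\rho$ the ancestry subtree of the generation-$k$ $\rho$-particles has at most $2M$ vertices (at most $M$ $\rho$-particles and at most $M$ one-parents of $\rho$-particles), hence at most $2M$ edges, even though the full BRW may contain up to $M^2\sqrt{\rho}^d$ one-children. For $\text{Bad}_a$, the localization estimate \eqref{e:besoindelabelGM} limits the number of pairs of $\rho$-particles at $\widehat L$-distance $<4\troncature$ across all stages to $O(k^2\troncature^2 M^2)$; conditioning on everything except the last step leading to one of the two particles (which is uniform on $K$), Lemma \ref{l:good-gap-GM} gives probability $\le 2\epsilon'_G(d,\eta)$ per pair, closing by \eqref{e:conditionepsilonG-GM}. $\text{Bad}_b$ is easier, with only $\le M$ grandparent/grandchild pairs where the step $V(z)-V(x')$ is $X_K+X_K'$. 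For $\text{Bad}_c$, each of the $\le 2M$ ancestor edges has bad probability $\le\epsilon/M$ by \eqref{e:troncature-GM}, yielding $\le 2\epsilon$. For $\text{Int}_1$ (rejection of a one-parent of some $\rho$-particle, which wipes out that $\rho$-particle), there are at most $M$ relevant one-particles and at most $O(k^2\troncature^2 M)$ nearby $\rho$-particles by localization; $\text{Bad}_a$ ensures the gap is in $G$, giving conditional probability $\le\eta$ per pair by definition of $G$, closed by \eqref{e:eta-GM}.

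The delicate term is $\text{Int}_\rho$. Let $z$ be the $\rho$-particle being rejected, $x$ its $\rho$-grandparent, and $y'$ the offending one-particle, with $\rho$-parent $x'$. I would split on whether $x'=x$. If $x'\neq x$, the interference geometry forces $\|\widehat L(V(x)-V(x'))\|_2 \le 4\troncature$, so $\text{Bad}_a$ yields $V(x)-V(x')\in G'$; invoking the definition of $G'$ through the independent uniform sum $V(z)-V(x)=X_K+X_K'$, and using that the interference region of $y'$ is contained in $V(x')+H'$ by \eqref{e:interference-region-1}, I obtain conditional probability $\le\eta$ per pair $(z,x')$ and total $O(k^2\troncature^2 M^2)\eta\le\epsilon$ by \eqref{e:eta-GM}. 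If $x'=x$, the $\le M\sqrt{\rho}^d$ one-siblings of $\parent{z}$ are too numerous to treat individually via $G'$, but $\text{Bad}_b$ provides $V(z)-V(x)\in H$; by definition of $H$ and the uniformity of $V(y')-V(x)$ on $K$ (using symmetry $K=-K$), the conditional probability that a single sibling causes interference is $\le(\sqrt{2}-\eta)^{-d}$. Summing over the $\le M\sqrt{\rho}^d$ siblings and the $\le M$ choices of $z$ gives $\le M^2\sqrt{\rho}^d(\sqrt{2}-\eta)^{-d}\le\epsilon$ by \eqref{e:shining}.

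The main obstacle is precisely the case $x'=x$ in $\text{Int}_\rho$: the combinatorial explosion of one-siblings must be absorbed by the $(\sqrt{2}-\eta)^{-d}$ decay from the definition of $H$, which is sufficient only when $\sqrt{\rho}<\sqrt{2}-\eta$. This is where the hypothesis $\rho<2$ enters via \eqref{e:intermediaire} and \eqref{e:shining}, and it is the mechanism behind the failure of Theorem \ref{t:GM-convex} for $\rho\ge 2$.
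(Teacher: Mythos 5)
Your proposal is correct and follows essentially the same route as the paper's proof: the same decomposition into Overpopulation/Else (controlled by \eqref{e:applilbriqueGM2}), the three types of Bad Gap, and the three interference subcases, with the same localization counting, the same two-step "good gaps then interference" mechanism, and the same use of $H$, $(\sqrt{2}-\eta)^{-d}$, and \eqref{e:shining} to absorb the $M\sqrt{\rho}^d$ siblings in the $x=x'$ case. The accounting $1+1+1+2+1+2=8$ matches the paper's $4\epsilon$ (Bad Gap) $+3\epsilon$ (Interference) $+\epsilon$ (Small$_\rho^c\cup$Else) up to a harmless regrouping.
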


\begin{proof}[Proof of Lemma \ref{l:mesurabilite-GM}] This is straightforward by construction.
\end{proof}

The proof of Lemma \ref{l:comparaisoninitiale-GM} is less involved than the proof of Lemma \ref{l:comparaisonsuite-GM}.
The difference is similar to the difference in the proofs of Lemmas \ref{l:comparaisoninitiale} and \ref{l:comparaisonsuite}.
There are moreover similarities in the proofs of the four lemmas.
Therefore, in order to avoid lengthy repetitions, 
we only give a somewhat sketchy proof of Lemma \ref{l:comparaisonsuite-GM} 
and quickly explain in footnotes the modifications needed for the proof of Lemma \ref{l:comparaisoninitiale-GM}.
For more details, we refer the reader to the proofs of Lemmas \ref{l:comparaisoninitiale} (which is the more detailed) and Lemma \ref{l:comparaisonsuite}.

\begin{proof}[Proof of Lemma \ref{l:comparaisonsuite-GM}] Let $(i,j) \in \cL$.
The event $\{(i,j) \text{ is active}\}$ is $\cF^-(i,j)$ measurable.
Therefore, we have to show
\[
\P[(i,j) \text{ open }| \cF^-(i,j)] \ge 1-\epsilon \text{ on the event } \{(i,j) \text{ is active}\}
\]
and
\[
\P[(i,j) \text{ open }| \cF^-(i,j)] \ge 1-\epsilon \text{ on the event } \{(i,j) \text{ is inactive}\}
\]
The second property is straightforward. Indeed, when $(i,j)$ is inactive, 
$(i,j)$ has been defined as open independently of everything else with probability $1-\epsilon$.
Let us prove the first property.

Below, we implicitly work on the event $\{(i,j) \text{ is active}\}$ and probabilities are always conditional to $\cF^-(i,j)$.
Therefore, there is a well defined set $\cS(i,j)$ with cardinality $m$ 
whose points are the starting points of $m$ \BRW.
This set is measurable with respect to $\cF^-(i,j)$.
We also have the \BRW\ $\overline\tau^{\cS(i,j)}$ which has been used in Stage $(i,j)$
\footnote{In the proof of Lemma \ref{l:comparaisoninitiale-GM}, there is no need to discuss about conditional probabilities.
Moreover, the \BRW\ is simply $\overline\tau^{0,0,1}$.}.
Moreover
\[
\{(i,j) \text{ is closed } \}  
\subset \rhobad \cup \rhorejection \cup {\smallrho{\overline\tau^{\cS(i,j)}}{M}{\rho}}^c \cup \autre
\]
where
\begin{align*}
\rhobad & = \{\text{a }\rho\text{-particle has been rejected directly or indirectly because of Bad Gap}\} 
\cap \smallrho{\overline\tau^{\cS(i,j)}}{M}{\rho}, \\
\rhorejection & = \{\text{a }\rho\text{-particle has been rejected directly or indirectly because of Interference}\} 
\cap \smallrho{\overline\tau^{\cS(i,j)}}{M}{\rho} 
\end{align*}
and where\footnote{In the proof of Lemma \ref{l:comparaisoninitiale-GM},
\[
\autre = \{ \overline\tau^{0,0,1}_k(A_{\widehat L}(1,0)) < m \}.
\]}
\[
\autre = \{ \overline\tau_k^{\cS(i,j)}(A_{\widehat L}(i+1,j+1)) < m \text{ or } \overline\tau_k^{\cS(i,j)}(A_{\widehat L}(i+1,j-1)) < m\}.
\]
By "a $\rho$-particle $x$ has been rejected directly or indirectly" we means that $x$ has been rejected or that one if its ancestors has been rejected.
The inclusion is due 
to the fact that, if $(i,j)$ is open but $\rhobad \cup \rhorejection \cup {\smallrho{\overline\tau^{\cS(i,j)}}{M}{\rho}}^c$
does not occur, then $\cG_k(i,j)$ is the set of positions of the particles of $\overline\tau_k^{\cS(i,j)}$ (which are $\rho$ particles).

Let us provide an upper bound for the probability of the event $\rhobad$.
Denote by $x$ the first particle (in the order of enumeration) that causes $\rhobad$.
This is either a $\rho$-particle rejected because of Bad Gap 
or a $1$-particle with at least one child in $\overline\tau^{\cS(i,j)}_k$ (in other word, a parent in $\overline\tau^{\cS(i,j)}_k$ of a $\rho$ particle)
rejected because of Bad Gap.
On $\smallrho{\overline\tau^{\cS(i,j)}}{M}{\rho}$ there are at most $M$ $\rho$-particles and therefore at most $M$ $1$-particles with at least one child.
This gives us a bound on the number of particles that can be the cause of $\rhobad$.

We further distinguish according to the three different types of $\rhobad$.
\begin{enumerate}
\item Suppose that $x$ satisfies the third condition of rejection by Bad Gap.
There are at most $2M$ choices for $x$ (which is either a $\rho$-particle or a parent of a $\rho$-particle).
By \eqref{e:troncature-GM} one deduces (by first conditioning by everything but $V(x)-V(\parent x)$)
that the probability that $\rhobad$ occurs because of the third type of Bad Gap is at most
\[
2M\epsilon/M=2\epsilon.
\]
If what follows we assume that $x$ does not satisfy the third condition of rejection by Bad Gap.
In particular, arguing as in \eqref{e:besoindelabelGM}, 
\begin{equation}\label{e:ahbentiens}
\widehat L (V(x)) \in (i,j) + 2\Lambda k D.
\end{equation}
\item Consider the first case.
There at most $M$ choices for the $\rho$-particle $x$.
As $\|{\widehat L}(V(x')-V(x))\|_2 \le 4\troncature$, there are at most $200 k^2\troncature^2 M$ choices for $x'$.
Let us prove this fact.
Either $x'$ is one of the at most $M$ $\rho$-particles generated at stage $(i,j)$ 
or $x'$ is one of the at most $M$ $\rho$-particles generated at an earlier stage $(i',j')$.
In the latter case, $\widehat L(V(x)) \in (i,j)+2k\troncature D$ (this is \eqref{e:ahbentiens})
and $\widehat L(V(x')) \in (i',j')+2k\troncature D$ (see \eqref{e:besoindelabelGM}).
As $\|{\widehat L}(V(x')-V(x))\|_2 \le 4\troncature$ this implies 
\[
\|(i,j)-(i',j')\|_2  \le (4k+4)\troncature \le 6k\troncature.
\]
(Recall that $k$ is even, so $k \ge 2$).
Therefore there are at most $(12k\troncature+1)^2$ choices for $(i',j')$.
Combining the previous properties, the number of choice for $x'$ is at most
\[
M+M(12k\troncature+1)^2 \le 200 k^2\troncature^2 M
\]
as announced. We used $k,\troncature \ge 1$ to simplify the upper bound\footnote{In the proof 
of Lemma \ref{l:comparaisoninitiale-GM}, there are actually at most $M$ choices for $x'$. 
There are therefore also at most $200 k^2\troncature^2 M$ choices for $x'$. Similar remarks apply several times below.}.
Finally, the number of choices for $(x,x')$  is at most $200 k^2\troncature^2 M^2$.

By Lemma \ref{l:good-gap-GM} one deduces (by first conditioning by everything but $V(x)-V(\parent x)$) that the probability that $\rhobad$
occurs because of the first type of Bad Gap is at most
\[
400 k^2\troncature^2 M^2 \epsilon'_G(d,\eta).
\]
By \eqref{e:conditionepsilonG-GM}, this is at most $\epsilon$.

\item Consider the second case. There are at most $M$ choice for the $\rho$-particle $x$. Then $x'$ is the grandparent of $x$.
By Lemma \ref{l:good-gap-GM} one deduces (by first conditioning by everything but $V(x)-V(\parent x)$ and $V(\parent x)-V(x')$) 
that the probability that $\rhobad$ occurs because of the second type of Bad Gap is at most
\[
2M\epsilon'_G(d,\eta).
\]
By \eqref{e:conditionepsilonG-GM}, as $k,\troncature,M \ge 1$, this is at most $\epsilon$.

\end{enumerate}
Finally,
\begin{equation}\label{e:majorationPbadGM}
\P[\rhobad] \le 4\epsilon.
\end{equation}

Let us now give an upper bound for the probability of the event $\rhorejection$.
Denote by $x$ the first particle (in the order of enumeration) that causes $\rhorejection$.
This is either a $\rho$-particle rejected because of Interference 
or a $1$-particle with at least one child in $\overline\tau^{\cS(i,j)}_k$ rejected because of Interference.
On $\smallrho{\overline\tau^{\cS(i,j)}}{M}{\rho}$ there are at most $M$ $\rho$-particles and therefore at most $M$ $1$-particles with at least one child.
We distinguish the two cases of rejection by interference.

\begin{enumerate}
\item In the first case, a $1$-particle $x$ is rejected because of a $\rho$-particle $y'$ which is not the parent $\parent x$ of $x$.
Note that $x$ has not been rejected because of Bad Gap, otherwise we would not consider $x$ for rejection by interference.

There are at most $M$ choices for the $1$-particle $x$ (as it has as least one child which is a $\rho$-particle).
The $\rho$-particle $y'$ is such that $V(x) \in V(y') + K \cap {{\widehat L}}^{-1}(\troncature D)$.
Therefore $\|{\widehat L}(V(x)-V(y'))\|_2 \le \troncature$.
As above, we conclude that there are at most $200 k^2\troncature^2M$ choices for $y'$ (this is a crude bound).
Therefore, there are at most $200 k^2\troncature^2M^2$ choices for $(x,y')$.

We have $\|{\widehat L}(V(\parent{x})-V(x))\|_2 \le \troncature$ (otherwise $x$ would have been rejected for Bad Gap)
and therefore $\|{\widehat L}(V(\parent{x})-V(y'))\|_2 \le 2\troncature$.
As a consequence $V(\parent{x})-V(y') \in G(d,K,\eta)$ (otherwise $y'$ or $\parent{x}$, which are distinct 
$\rho$-particles, would have been rejected for Bad Gap).
Write $V(x)-V(y')=(V(\parent{x})-V(y'))+(V(x)-V(\parent{x}))$.
By definition of $G(d,K,\eta)$, condition to everything but $V(x)-V(\parent{x})$, the probability 
that $x$ is rejected because of interference with $y'$ is at most $\eta$.

Therefore, the probability that $\rhorejection$ occurs because of the first type of Rejection is at most
\[
200 k^2\troncature^2M^2\eta.
\]
By \eqref{e:eta-GM}, this is at most $\epsilon$.

\item In the second case, $\rhorejection$ is caused by the rejection of 
a $\rho$-particle $z$ --	 with parent $y$ and grand parent $x$  --
due to interference with a $1$-particle $y'$ -- with parent $x'$ -- where $y'$ is different from $y$.
Note that $x$ and $x'$ are $\rho$-particles, that $y$ and $y'$ are $1$-particles and that $z$ is a $\rho$-particle.

As $z$ is rejected because of interference with $y'$, at some point in the examination process:
\begin{itemize}
\item $z$ is being examined and has not been rejected for Bad Gap (otherwise we would not even consider rejection of $z$ due to interference).
\item $y'$ has been examined before and has not been rejected (otherwise it would not be a generated particle and we would not consider the 
interference caused by $y'$).
\end{itemize}
This also implies that $y,x$ and $x'$ have been examined (parents are examined before their children)
and have not been rejected (because when a particle is rejected, its progeny is rejected).
As we examine in an admissible order, 
$x'$ can not be a progeny of $y$ (because we already examined the child $y'$ of $x'$ and we are currently examining $z$)
or $z$.
As $z$ is a grandchild of $x$, $z$ is different from $x$.
To sum up, among all particles $x,y,z,x',y'$ the only possible equality is $x=x'$.

We further distinguish according whether $x=x'$ or not.
\begin{enumerate}
\item Case $x \neq x'$.  

As $z$ is a grand-child of $x$ we have $\|{\widehat L}(V(z)-V(x))\|_2 \le 2 \troncature$
(otherwise $y$ or $z$ would have been rejected for Bad Gap).
As $z$ interferes with $y'$ we have $\|{\widehat L}(V(z)-V(y'))\|_2 \le \troncature$.
As $y'$ is a child of $x'$ we have $\|{\widehat L}(V(y')-V(x'))\|_2 \le \troncature$ 
(otherwise $y'$ would have been rejected for Bad Gap).
Therefore 
\begin{equation}\label{e:ile}
\|\widehat L(V(z)-V(x'))\|_2 \le 2\troncature
\end{equation}
and
$\|{\widehat L}(V(x')-V(x))\|_2 \le 4\troncature$.
As the $\rho$-particles $x$ and $x'$ have not been rejected by Bad Gap, 
\begin{equation}\label{e:du}
V(x)-V(x') \in G'(d,K,\eta).
\end{equation}
This is where we use the assumption $x \neq x'$.
Note that rejection of $z$ because of $y'$ implies (see \eqref{e:interference-region-1})
\begin{equation} \label{e:crane}
V(z)-V(x') \in H'(d,K,\eta).
\end{equation}
We thus see that if $\rhorejection$ occurs because of this type of case, then there exists a $\rho$-particle $z$ with grandparent $x$
and a $\rho$-particle $x'$ distinct of $x$ -- which is not a progeny of $y$ or $z$ -- such that \eqref{e:ile}, \eqref{e:du} and \eqref{e:crane} occurs.
Note that there is no mention of $y'$ anymore.
As $z$ and $x'$ are $\rho$-particles, by \eqref{e:ile} and arguments already used above, 
there are at most $200 k^2\troncature^2M^2$ choices for $(z,x')$.

Write $V(z)-V(x')=(V(z)-V(y))+(V(y)-V(x))+(V(x)-V(x'))$.
By definition of $G'(d,K,\eta)$ and $H'(d,H,\eta)$,
for each $(z,x')$, condition by everything but $V(z)-V(y)$ and $V(y)-V(x)$, on the event $\{\text{\eqref{e:du}} \text{ occurs}\}$
(which is measurable with respect to the conditioning $\sigma$-field because $x'$ is not a progeny of $y$),
the probability of \eqref{e:crane} is at most $\eta$.
To sum up, the probability that $\rhorejection$ occurs because of this type of case is at most 
\[
200 k^2\troncature^2M^2\eta.
\]
By \eqref{e:eta-GM}, this is at most $\epsilon$.

\item Case $x = x'$.  
There are at most $M$ choices for the $\rho$-particle $z$.
As the enumeration has not been stopped by Overpopulation, given $z$, there are at most $M\sqrt{\rho}^d$ choices for $y'$ which is a children of 
the grandfather $x$ of $z$.
Thus, there are at most $M^2\sqrt{\rho}^d$ choices for $(z,y')$.
As $z$ has not been rejected because of Bad Gap,
\begin{equation}\label{e:patate}
V(x) - V(z) \in  H(d,K,\eta).
\end{equation}
As $z$ is rejected by interference with $y'$, $V(z) \in V(y')+K$ and therefore (using also symmetry of $K$)
\begin{equation}\label{e:frite}
(V(x)-V(z)) + (V(y')-V(x)) \in K.
\end{equation}
But by definition of $H(d,H,\eta)$, given $z$ and $y'$, condition to everything but $V(y')-V(x)$, on 
$\{\text{\eqref{e:patate} occurs}\}$, the probability of 
\eqref{e:frite} is at most $(\sqrt{2}-\eta)^{-d}$.
Therefore the probability that $\rhorejection$ occurs because of this type of case is at most 
\[
M^2\sqrt{\rho}^d(\sqrt{2}-\eta)^{-d}.
\]
By \eqref{e:shining} this is at most $\epsilon$.
\end{enumerate}
\end{enumerate}
Finally,
\begin{equation}\label{e:majorationPinterferenceGM}
\P[\rhorejection] \le 3\epsilon.
\end{equation}

By \eqref{e:applilbriqueGM2}\footnote{In the proof of Lemma \ref{l:comparaisoninitiale-GM}, we apply \eqref{e:applilbriqueGM1}
to get
\[
\P\left[{\smallrho{\overline\tau^{\cS(i,j)}}{M}{\rho}}^c \cup \autre\right] \le 1-(\survival(\beta^2)-\epsilon).
\]
}
\[
\P\left[{\smallrho{\overline\tau^{\cS(i,j)}}{M}{\rho}}^c \cup \autre\right] \le \epsilon.
\]
This concludes the proof.
\end{proof}

\subsubsection{Proof of Theorem \ref{t:GM-convex-lower}}
\label{s:preuve-t-GM-convex-lower}

\paragraph{Proof of Item 1 of Theorem \ref{t:GM-convex-lower}.}

In Section \ref{s:cluster-perco-orientee-GM} we fixed $\beta>1$, $2>\rho>1$ and $\epsilon>0$.
We then got some integer $d_0$ (depending only on $\beta,\rho$ and $\epsilon$) and several other parameters satisfying various properties.
We then let $d \ge d_0$ and $K \in \cK(d)$ and built some process in Section \ref{s:cluster-perco-orientee-GM}.
We studied some properties of this process in Section \ref{s:proba-briques-GM}.

As in Section \ref{s:preuve-t-penrose-convex-lower}, we have

\[
\P[\#\widehat A^0=\infty] \ge \P[\text{there exists an infinite open path from }(0,0)].
\]
By Lemmas \ref{l:mesurabilite-GM}, \ref{l:comparaisoninitiale-GM} and \ref{l:comparaisonsuite-GM} we get
\[
\P[\text{there exists an infinite open path from }(0,0)] \ge (\survival(\lambda)-8\epsilon)\theta(1-8\epsilon)
\]
where $\theta(1-8\epsilon)$ is the probability that there exists an infinite open path originating from $(1,0)$ in a Bernoulli site percolation
on $\cL$ with parameter $1-8\epsilon$.
Therefore
\[
\P[\#\widehat C^0=\infty] \ge \P[\#\widehat A^0=\infty] \ge (\survival(\beta^2)-8\epsilon)\theta(1-8\epsilon).
\]
But $\theta(1-8\epsilon)$ tends to $1$ as $\epsilon$ tends to $0$ (see \eqref{e:percolimite}).
This proves Item 1 of Theorem \ref{t:GM-convex-lower} in the case $\beta > 1$. 
The case $\beta \le 1$ is trivial as, in this case, $\survival(\beta^2)=0$.

\paragraph{Proof of Item 2 of Theorem \ref{t:GM-convex-lower}.}

Let $\beta>1$, $2>\rho>1$.
Let $\epsilon>0$ such that $\survival(\beta^2)-\epsilon>0$.
By Item 1, there exists $d_0$ such that, for all $d \ge d_0$ and all $K \in \cK(d)$,
\[
\P[\#\widehat C^0=\infty] \ge \survival(\beta^2)-\epsilon > 0.
\]
Therefore, for all such $\rho, d$ and $K$,
$
\beta_c(\rho,d,K) \le \beta.
$

\appendix

\section{Some details on the definition of $\lambda_c$}
\label{s:detail-lambda_c}
	
All the results in this section are very standard and simple, but we have no ready reference for them.
We provide a few technical details related to the definition of $\lambda_c$.
We refer to the notations used in \eqref{e:lambda_c_graphe}, \eqref{e:lambda_c_composante} and \eqref{e:lambda_c_01}.

\paragraph{Measurability of $\{\text{the connected component of the graph }\chi^0\text{ that contains }0\text{ is unbounded}\}$.}
This is a consequence of the following facts:
\begin{itemize}
\item There exists a sequence of random variables $(C_n,R_n)_n$ such that, on the full event $\{\xi \text{ is infinite}\}$,
\[
\xi=\{(C_n,R_n), n \in \N\}.
\]
In other words, $\chi=\{(C_n), n \in \N\}$ and for all $n$, $r(C_n)=R_n$.
\item The map defined by $(c_1,r_1,c_2,r_2) \mapsto \1_{\{c_1+r_1K \cap c_2+r_2K \neq \emptyset\}}$ is measurable because
\[
\1_{\{c_1+r_1K \cap c_2+r_2K \neq \emptyset\}} = \1_K\left(\frac{1}{r_1+r_2}(c_2-c_1)\right).
\]
The above equality is a consequence of $r_2K-r_1K=(r_2+r_1)K$ which, in turn, is a consequence of the fact that $K$ is symmetric and convex.
\end{itemize} 

\paragraph{Equality \eqref{e:lambda_c_composante}.} This is a consequence of 
\begin{align}
& \{\text{the connected component of the graph }\chi^0\text{ that contains }0\text{ is unbounded}\} \nonumber \\ 
&=\{\text{the connected component of }\Sigma \cup rK \text{ that contains }0\text{ is unbounded}\}. \label{e:pouf}
\end{align}
Let us prove this equality. 
Let $\widehat C^0$ be the connected component of the origin in the graph.
For all $x,y \in \chi^0$, there is an edge between $x$ and $y$ if and only if $x+r(x)K$ touches $y + r(y)K$.
For all $x \in \chi^0$, $x+r(x)K$ is connected. 
Using these two facts, we get that
\begin{equation}\label{e:pern}
\bigcup_{x \in {\widehat C^0}} x+r(x)K \text{ is connected}
\end{equation}
and that 
\begin{equation}\label{e:oupaspern}
\bigcup_{x \in {\widehat C^0}} x+r(x)K \text{ and } \bigcup_{x \in {\chi^0 \setminus \widehat C^0}} x+r(x)K 
\text{ are disjoint}.
\end{equation}
For all $x \in \chi^0$, $x+r(x)K$ is compact. 
The number of grains that touches a given bounded region is finite.
This is a simple consequence of the fact that $\nu$ has bounded support, that $K$ is bounded that $\chi^0$ is locally finite.
Using these two facts, we get that the two sets appearing in \eqref{e:oupaspern} are closed subsets of $\R^d$ and therefore of $\Sigma \cup rK$.

Using \eqref{e:pern}, \eqref{e:oupaspern} and the closedness of the sets 
we get that the set which appears in \eqref{e:pern} is the connected component of $\Sigma \cup rK$ that contains the origin.
This yields \eqref{e:pouf} as $K$ is bounded.

\paragraph{Equality \eqref{e:lambda_c_01}.} With the same ideas as in the previous paragraph,
we can check that "one of the connected component of $\Sigma$ is unbounded"
if and only if "one of the connected component of the graph $\chi$ is unbounded". 
Then, with the same ideas as in the first paragraph, we can check that this is measurable.

By ergodicity under spatial translations of the model, the probability of the 
event "one of the connected component of $\Sigma$ is unbounded" is $0$ or $1$.
Define
\begin{align*}
\Lambda_0 & = \{\lambda>0 : \P[\text{the connected component of }\Sigma\text{ containing }0\text{ is unbouded}]>0\}, \\
\Lambda_r & = \{\lambda>0 : \P[\text{the connected component of }\Sigma \cup rK \text{ containing }0\text{ is unbouded}]>0\}, \\
\Lambda' & = \{\lambda>0 : \P[\text{one of the connected components of }\Sigma\text{ is unbouded}]>0\}, \\
\Lambda & = \{\lambda>0 : \P[\text{one of the connected components of }\Sigma\text{ is unbouded}]=1\}.
\end{align*}
Let us show that the four sets are equal.
As $K$ is connected with non empty interior, one of the connected component of $\Sigma$ is unbounded
if and only if there exists $x \in \Q^d$ such that the connected component of $\Sigma$ containing $x$ is unbounded.
Thus $\Lambda \subset \Lambda_0$.
The inclusion $\Lambda_0 \subset \Lambda_r$ is straightforward.
The connected component of $\Sigma\cup rK$ that contains the origin is \eqref{e:pern}.
The connected components of $\Sigma$ are in similar correspondence with connected components of the graph $\chi$.
As moreover the degree of $0$ in the graph $\chi^0$ is finite we get $\Lambda_r \subset \Lambda'$.
Finally, $\Lambda' \subset \Lambda$ by ergodicity.
Thus the four sets are equal. 
In particular, $\Lambda_r=\Lambda$ and this proves \eqref{e:lambda_c_01}.

\def\cprime{$'$} \def\cprime{$'$}

\end{document}